\numberwithin{equation}{section}
\newcommand{\BDC}{{\mathbf{D}}^{\mathrm{b}}}
\newcommand{\Hom}{\mathrm{Hom}}
\newcommand{\CC}{\mathbb{C}}
\newcommand{\RR}{\mathbb{R}}
\newcommand{\QQ}{\mathbb{Q}}
\newcommand{\ZZ}{\mathbb{Z}}
\newcommand{\E}{\mathcal{E}}
\newcommand{\F}{\mathcal{F}}
\newcommand{\SL}{\mathcal{L}}
\newcommand{\M}{\mathcal{M}}
\newcommand{\PP}{{\mathbb P}}
\newcommand{\KK}{{\rm K}}
\newcommand{\HSm}{{\rm HS}^{\rm mon}}
\newcommand{\LL}{{\mathbb L}}
\newcommand{\Var}{{\rm Var}}
\renewcommand{\dim}{{\rm dim}}
\newcommand{\Vol}{{\rm Vol}}
\newcommand{\e}{\varepsilon}
\newcommand{\Spec}{{\rm Spec}}
\newcommand{\Aff}{{\rm Aff}}
\newcommand{\Int}{{\rm Int}}
\newcommand{\relint}{{\rm rel.int}}
\newcommand{\tl}[1]{\widetilde{#1}}
\newcommand{\ov}[1]{\overline{#1}}
\newcommand{\simto}{\overset{\sim}{\longrightarrow}}
\newcommand{\dsum}{\displaystyle \sum}
\newcommand{\BK}{\mathbb{K}}
\newcommand{\CS}{\mathbb{C}^{*}}
\DeclareMathOperator{\Conv}{Conv}
\DeclareMathOperator{\ord}{ord}
\DeclareMathOperator{\INT}{int}
\newcommand{\MH}{\hat{\mu}}
\newcommand{\UH}{\mathrm{UH}}
\newcommand{\LK}{\mathrm{lk}}
\newcommand{\MCS}{\mathcal{S}}
\newtheorem{theorem}{Theorem}[section]
\newtheorem{corollary}[theorem]{Corollary}
\newtheorem{lemma}[theorem]{Lemma}
\newtheorem{proposition}[theorem]{Proposition}
\theoremstyle{definition}
\newtheorem{definition}[theorem]{Definition}
\theoremstyle{remark}
\newtheorem{remark}[theorem]{\sc Remark}
\title{
On the monodromies and the limit mixed Hodge 
structures of families of algebraic varieties 
\footnote{{\bf 2010 Mathematics 
Subject Classification }14E18, 14M25, 32C38, 32S35, 
32S40}
}
\author{
Takahiro SAITO 
\footnote{Research Institute for Mathematical 
Sciences, Kyoto University, Kyoto 606-8502, Japan.
E-mail: takahiro@kurims.kyoto-u.ac.jp}
and Kiyoshi TAKEUCHI 
\footnote{Mathematical Institute, Tohoku University,
Aramaki Aza-Aoba 6-3, Aobaku, Sendai, 980-8578, Japan.
E-mail: takemicro@nifty.com } }
\date{}
\begin{document}

\maketitle
\begin{abstract}
We study the monodromies and the limit mixed Hodge structures 
of families of complete intersection 
varieties over a punctured disk in 
the complex plane. For this purpose, we express their 
motivic nearby fibers in terms of the geometric data of 
some Newton polyhedra. In particular, the limit mixed Hodge 
numbers and some part of the Jordan normal forms of 
the monodromies of such a family will be described 
very explicitly. 
\end{abstract}

\maketitle

\section{Introduction}

\label{sec:1}
Families of algebraic varieties are basic objects in algebraic 
geometry. Here we are interested in the special but 
fundamental case where such one $Y$ is smooth and defined 
over the punctured disk $B(0; \e )^*= 
\{ t \in \CC \ | \ 0<|t|< \e \} \subset \CC$ 
($0< \e \ll 1$) in $\CC$. For this family $\pi_Y: Y 
\longrightarrow B(0; \e )^*$ let us consider 
its fibers $Y_t= \pi_Y^{-1}(t) \subset Y$ 
($0<|t|< \e$) and their cohomology groups 
$H^j(Y_t; \CC )$ ($j \in \ZZ$). Then it 
is our primary interest to know $H^j(Y_t; \CC )$ 
themselves and the monodromy operators acting on them. 
Moreover, we have
the limit mixed Hodge structure 
$H^j(Y_{\infty}; \CC )$
which encodes some information of the monodromy
(see El Zein \cite{E-Z} and
Steenbrink-Zucker \cite{S-Z}).
However in general, it is very hard to compute 
the monodromies and the limit mixed Hodge numbers explicitly. 
Very recently, based on our previous works 
\cite{E-T}, \cite{M-T-4}, \cite{M-T-5} and 
some new results in \cite{K-S-1}, \cite{K-S-2}, 
Stapledon \cite{Stapledon} succeeded in computing 
the latter ones for families 
$Y \subset B(0; \e )^* \times \CC^n$ 
of sch\"{o}n hypersurfaces in 
$\CC^n$. Here the sch\"{o}nness is a very weak 
condition which is almost always satisfied. 
More precisely, in \cite{Stapledon} the author 
expressed the motivic nearby fiber $\psi_t ([Y])$ of 
such $Y \longrightarrow 
B(0; \e )= \{ t \in \CC \ | \ |t|< \e \}
=B(0; \e )^* \sqcup \{ 0 \}$ by 
the function $t={\rm id}_{\CC}: \CC \longrightarrow \CC$ 
in terms of the tropical variety associated to 
the defining Laurent polynomial $f(t,x) \in \CC(t) 
[x_{1},\dots, x_{n}]$ 
of the hypersurface $Y \subset 
B(0; \e )^* \times \CC^n$ and obtained a 
complete description of the limit mixed Hodge numbers 
of $H^j(Y_\infty; \CC )$. 
His idea is to subdivide $\CC^n$ into 
some algebraic tori $( \CC^*)^k$ 
($0 \leq k \leq n$) by the additivity of 
$\psi_t$ and apply the arguments of 
Batyrev-Borisov \cite{B-B} and 
Borisov-Mavlyutov \cite{B-M} in mirror 
symmetry to each piece $( \CC^*)^k$ by using 
the new special polynomials that
provide a common generalization for polynomials introduced in 
Katz-Stapledon \cite{K-S-1} and Stapledon~\cite{StapWeight}. In particular, he 
effectively used the purity 
and the generalized Poincar\'{e} duality for the 
intersection cohomology groups of some 
singular hypersurfaces in the toric 
compactifications of $( \CC^*)^k$ to obtain 
these remarkable results. 
Thanks to them, the first author T. Saito 
obtained some new results on the weight 
filtrations of the stalks of intersection 
cohomology complexes. See \cite{Saito} for 
the details. 
However the motivic nearby 
fiber $\psi_t ([Y])$ used in the paper 
\cite{Stapledon} is the one introduced by Steenbrink 
\cite{Steenbrink} very recently with the help of 
the semi-stable reduction theorem and 
it is not clear for us if it coincides with the 
classical (and more standard) one of 
\cite{D-L-1}, \cite{D-L-2} and \cite{G-L-M} (see also Raibaut \cite{R-2} 
for a nice introduction to this subject). 
Moreover the arguments in \cite{Stapledon} 
heavily depend on some deep technical results in 
recent tropical geometry. It is therefore desirable 
to use the classical motivic nearby fibers and 
describe them without using the tropical geometry. 
The aim of this paper is to simplify Stapledon's 
arguments and extend his results to 
families of sch\"{o}n complete intersection subvarieties 
in $\CC^n$. Moreover, by extending our previous results in 
\cite{E-T}, \cite{M-T-4}, \cite{T-T}, as in \cite{T-T} 
we show that some parts of the Jordan normal forms of 
the monodromies on $H^j(Y_t; \CC )$ ($0<|t|< \e$) 
can be described very explicitly. 

In order to explain these results more precisely, 
let us introduce our geometric situation 
and notations. Let $\BK=\CC(t)$ be the field 
of rational functions of $t$ and
$f(t,x)=\sum_{v \in \ZZ^n}a_{v}(t)x^{v} 
\in\BK [x_{1}^{\pm},\dots,x_{n}^{\pm}] 
\ (a_{v}(t)\in\BK)$
a Laurent polynomial of $x=(x_{1},\dots,x_{n})$ 
with coefficients in $\BK$. Then we define a 
family $Y \subset B(0; \e )^* \times ( \CC^*)^n$ 
of hypersurfaces in the algebraic torus 
$(\CC^*)^n$ over the punctured disk $B(0; \e )^*$ by 
$Y=f^{-1}(0) \subset 
B(0; \e )^* \times (\CC^*)^n$. 
If $f$ has a very special form $f(t,x)=1-tg(x)$ 
for some Laurent polynomial 
$g(x) \in \CC [x_{1}^{\pm},\dots,x_{n}^{\pm}]$ 
the monodromy of $Y$ around the origin 
$0 \in \CC$ is nothing but the monodromy at 
infinity of the polynomial map 
$g: ( \CC^*)^n \longrightarrow \CC$. In this 
sense, our setting is a vast generalization 
of the classical ones of 
\cite{Broughton}, \cite{E-T}, \cite{L-N-2}, 
\cite{L-S}, \cite{M-T-2}, \cite{M-T-4}, 
\cite{M-T-6}, \cite{Raibaut}, \cite{Sabbah-1}, 
\cite{Sabbah-2}, \cite{S-T-1}, \cite{Tibar}. 
For $v\in\ZZ^n$ by the Laurent 
expansion $a_{v}(t)=\sum_{j\in\ZZ}a_{v,j}
t^j\ (a_{v,j}\in\CC)$ of the rational function $a_{v}(t)$ 
we set
\[ o(v):=\ord_{t}a_{v}(t)=\min\{j \ | \ a_{v,j}\neq 0\}.\]
If $a_{v}(t) \equiv 0$ we set $o(v)=+\infty$.
Then we define an (unbounded) polyhedron 
$\UH_{f}$ in $\RR^{n+1}$ by
\[\UH_{f} = \Conv{\Bigl[ \bigcup_{v\in\ZZ^n}\{(v,s) 
 \in \RR^{n+1} \ | \ 
s \geq o(v)\}  
\Bigr]} \subset \RR^{n+1},\]
where $\Conv{(\ \cdot\ )}$ stands for the convex hull. 
We call it the upper-half polyhedron of $f$. 
Throughout this paper we assume that the 
dimension of $\UH_{f}$ is $n+1$.
Then by the projection $p:\RR^{n+1}
=\RR^n\times\RR^{1}\twoheadrightarrow 
\RR^n$ we obtain an $n$-dimensional polytope 
$P:=p(\UH_{f}) \subset \RR^n$ 
which we call the Newton polytope of $f$. 
Let $\nu_{f}:P\to \RR$ be the function 
defining the bottom part of the boundary 
$\partial \UH_{f}$ of $\UH_{f}$ and 
$\mathcal{S}$ the subdivision of $P$ by the lattice polytopes
$p(\widetilde{F})\subset \RR^n\ (\widetilde{F}\prec \UH_{f})$. 
We call such $F= p(\widetilde{F}) \in \mathcal{S}$ 
a cell in $\mathcal{S}$. 
We denote by $\relint F$ its relative interior 
i.e. its interior in the affine span 
${\rm Aff}(F) \simeq \RR^{\dim F}$ of $F$. 
Then by defining a hypersurface 
$V_F$ of the algebraic torus
$T_{F}=\Spec(\CC[{\rm Aff}(F) 
\cap\ZZ^n]) \simeq (\CC^*)^{\dim F}$ 
for each cell $F \in \mathcal{S}$ and the 
sch\"{o}nness of the family $Y$ etc. 
(see Section \ref{sec:3} for the details), 
we reobtain the following beautiful 
result of Stapledon \cite{Stapledon}. 
For the family $Y$ over the punctured disk, 
denote by $\psi_{t}([Y]) \in 
\M_{\CC}^{\hat{\mu}}$ 
its motivic nearby fiber by the function
$t= {\rm id}_{\CC}:\CC \longrightarrow \CC$. 
Here $\M_{\CC}^{\hat{\mu}}$ stands for 
a localization of the Grothendieck group 
of varieties with good actions of the 
group $\hat{\mu}  = \underset{d 
\in \ZZ_{>0}}{\varprojlim} \ZZ/\ZZ d$ 
(see Section \ref{sec:2} for the details).

\begin{theorem}\label{RCS}
Assume that the family $Y$ of 
hypersurfaces in $( \CC^*)^n$ is sch\"on. 
Then we have an equality
\[\psi_{t}([Y])=\sum_{\relint F \subset 
\Int P}[V_{F}\circlearrowleft \MH]\cdot 
(1-\mathbb{L})^{n-\dim{F}}\]
in $\M_{\CC}^{\hat{\mu}}$, 
where $[V_{F}\circlearrowleft \MH] \in \M_{\CC}^{\hat{\mu}}$ 
(resp. $\mathbb{L} \in \M_{\CC}^{\hat{\mu}}$) 
is the element of $\M_{\CC}^{\hat{\mu}}$ 
defined by $V_F$ (resp. the complex line $\CC$) 
endowed with a natural (resp. the trivial) 
action of $\hat{\mu}$ (see Section \ref{sec:3} for the details).
\end{theorem}
We prove this theorem by using only the classical 
toric geometry and a result of 
Guibert-Loeser-Merle \cite{G-L-M} 
(see Theorem \ref{GLMT}). 
Then in Section \ref{sec:4} we apply it to families 
$Y \subset B(0; \e )^* \times \CC^n$ 
of sch\"{o}n hypersurfaces   
in $\CC^n$ and describe 
some parts of the Jordan normal forms of 
the monodromies on $H^j(Y_t; \CC )$ ($0<|t|< \e$) explicitly. 
More precisely, as in \cite{T-T} we define 
a finite subset $R_f$ of $\CC$ by $\UH_{f}$ and 
describe the Jordan normal forms for 
the eigenvalues $\lambda \notin R_f$. 
For this purpose, first we prove the following 
concentration theorem. For $j \in \ZZ$ and 
$\lambda \in \CC$ let 
\[H^{j}(Y_{t};\CC)_{\lambda} \subset H^{j}(Y_{t};\CC)
\qquad ({\rm resp.} \ 
H^{j}_{c}(Y_{t};\CC)_{\lambda} \subset H^{j}_{c}(Y_{t};\CC) 
) \]
be the generalized eigenspace of the monodromy automorphism 
$\Psi_{j}:H^{j}(Y_{t};\CC)\xrightarrow{\sim}
H^{j}(Y_{t};\CC)$ (resp. 
$\Phi_{j}:H^{j}_{c}(Y_{t};\CC)\xrightarrow{\sim}
H^{j}_{c}(Y_{t};\CC)$) for $0<|t|<\varepsilon$.

\begin{theorem}\label{BCTM}
(see Theorem \ref{th:5}, 
Corollary \ref{Ncc} and Remark \ref{imrem}) 
Assume that the family $Y$ of hypersurfaces 
in $\CC^{n}$ is sch\"on. 
Then for any $\lambda\notin R_{f}$ 
and $t\in \CS$ such that $0<|t|\ll1$ we have 
isomorphisms 
\[H^{j}_{c}(Y_{t};\CC)_{\lambda} 
\simeq H^{j}(Y_{t};\CC)_{\lambda} \qquad (j \in \ZZ )\]
and the concentration 
\[H^{j}(Y_{t};\CC)_{\lambda} 
\simeq 0 \qquad (j \not= n-1).\]
\end{theorem}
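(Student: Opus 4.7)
The plan is to reduce Theorem \ref{BCTM} to the motivic nearby fiber computation of Theorem \ref{RCS} by a torus stratification of $\CC^n$, followed by an affine Lefschetz style vanishing on the top-dimensional piece. First I would decompose $\CC^n$ into the disjoint union of coordinate algebraic tori $T_I \simeq (\CC^*)^{|I|}$ indexed by subsets $I \subset \{1,\dots,n\}$. For each $I$, the intersection $Y \cap (B(0;\e)^* \times T_I)$ is a family of hypersurfaces in $T_I$ whose defining Laurent polynomial $f_I(t,x)$ is obtained by setting the coordinates outside $I$ to zero, and whose Newton polytope $P_I$ is the corresponding face of $P$. When $f$ is sch\"on in $\CC^n$, the restricted families are sch\"on in the respective tori, so Theorem \ref{RCS} applies to each. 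Additivity of the motivic nearby fiber in $\M_{\CC}^{\MH}$ then gives
\[
\psi_t([Y]) = \sum_{I \subset \{1,\dots,n\}} \sum_{\relint F \subset \Int P_I} [V_F \circlearrowleft \MH]\cdot (1-\LL)^{|I|-\dim F}.
\]

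Next I would pass to the cohomological realization. Applying the Hodge-theoretic realization to this identity (equivalently, taking Euler characteristics in each monodromy eigenspace) decomposes $\sum_j (-1)^j [H^j(Y_t;\CC)_\lambda]$ as a sum of contributions from cells $F$, each of which is a generalized $\lambda$-eigenspace of the monodromy on the cohomology of the sch\"on hypersurface $V_F$. The finite set $R_f \subset \CC$ inherited from \cite{T-T} is defined so that it contains every monodromy eigenvalue arising from any cell $F$ that lies on a proper face $P_I \subsetneq P$ or on the boundary of $P$ itself. Consequently, for $\lambda \notin R_f$ only cells $F$ with $I = \{1,\dots,n\}$ and $\relint F \subset \Int P$ can contribute to the $\lambda$-part.

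For these remaining interior cells, $V_F$ is a sch\"on hypersurface in a full-dimensional torus, and the classical Artin vanishing theorem applied to the smooth affine variety $Y_t$ of complex dimension $n-1$ yields $H^j(Y_t;\CC) = 0$ for $j > n-1$. A parallel argument using Poincar\'e-Lefschetz duality on the smooth $Y_t$ gives $H^j_c(Y_t;\CC) = 0$ for $j < n-1$. Combining these two vanishings with the Euler characteristic decomposition above produces the concentration $H^j(Y_t;\CC)_\lambda \simeq 0$ for $j \neq n-1$. The isomorphism $H^j_c(Y_t;\CC)_\lambda \simeq H^j(Y_t;\CC)_\lambda$ for $\lambda \notin R_f$ would then follow from the long exact sequence linking $H^{\bullet}_c(Y_t)$ and $H^{\bullet}(\ov{Y_t})$ through a suitable smooth compactification $\ov{Y_t} \supset Y_t$: the boundary divisor $\ov{Y_t} \setminus Y_t$ is controlled by the lower-dimensional toric strata and therefore carries only eigenvalues inside $R_f$, so the natural map $H^j_c(Y_t;\CC)_\lambda \to H^j(Y_t;\CC)_\lambda$ is an isomorphism on the $\lambda$-part.

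The main obstacle will be the precise combinatorial identification of the monodromy eigenvalues contributed by each boundary cell with the set $R_f$ extracted from $\UH_f$. This requires Varchenko-A'Campo-Danilov-Khovanskii style computations of the characteristic polynomials of the monodromies on each $V_F$, together with an explicit match between the roots of unity appearing there and the lattice data of the upper faces $\widetilde{F} \prec \UH_f$. Once this bookkeeping is in place -- extending the analogous analysis from \cite{T-T} to the present family setting and verifying it is compatible with the additivity used for the motivic fiber -- both conclusions of the theorem fall out immediately by extracting the $\lambda$-eigenspace of the stratified motivic formula for $\lambda \notin R_f$.
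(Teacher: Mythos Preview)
Your approach has a genuine gap at the step where you pass from the motivic nearby fiber identity to the concentration and the isomorphism $H^j_c(Y_t;\CC)_\lambda\simeq H^j(Y_t;\CC)_\lambda$.

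First, the motivic formula only computes the class $\sum_j(-1)^j[H^j_c(Y_t;\CC)_\lambda]$ in a Grothendieck group. Artin vanishing gives $H^j(Y_t;\CC)=0$ for $j>n-1$, and Poincar\'e duality gives $H^j_c(Y_t;\CC)=0$ for $j<n-1$, but these are statements about \emph{different} cohomology theories: together with an Euler-characteristic identity they do not force $H^j_c(Y_t;\CC)_\lambda=0$ for $j>n-1$ nor $H^j(Y_t;\CC)_\lambda=0$ for $j<n-1$. You need the isomorphism $H^j_c\simeq H^j$ on the $\lambda$-part \emph{before} you can combine the two half-vanishings, not after. Second, your description of $R_f$ is not what the paper uses: in Section~\ref{sec:4} $R_f$ is built only from cells $F\subset P_\infty$, and it does \emph{not} contain every eigenvalue coming from a proper coordinate face $P_I\subsetneq P$. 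Those lower strata do contribute for $\lambda\notin R_f$; in the proof of Theorem~\ref{th:syucyuu} their contributions cancel through the alternating sum $\sum_I(-1)^{|I|-1}g([P^{I'},P^I];\cdot)$, they do not individually vanish.

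The substantive missing ingredient is exactly what the paper supplies in Theorems~\ref{th:2} and~\ref{th:5}: one must show that for $\lambda\notin R_f$ the cone of $R\pi_!\CC_Y\to R\pi_*\CC_Y$ (equivalently, the contribution of the boundary divisor in a toric compactification) has vanishing $\psi_{t,\lambda}$. The paper does this not by eigenvalue bookkeeping but by invoking Morihiko Saito's primitive decomposition of the nearby cycle perverse sheaf: each graded piece is built from minimal extensions of rank one local systems $\SL_\tau$ on strata $\ov{Y}\cap T_\tau$, and for $\lambda\notin R_f$ one finds a ray $\rho$ along which $\SL_\tau$ has nontrivial monodromy, forcing $R\Gamma_c(\ov{Y}\cap T_\tau;\SL_\tau)\simeq 0$ via a K\"unneth argument. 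Your compactification/long-exact-sequence sketch would have to reprove precisely this vanishing, and the ``Varchenko--A'Campo--Danilov--Khovanskii bookkeeping'' you propose does not do it: those tools again see only Euler characteristics, not the individual graded pieces of the weight filtration on $\psi_t$.
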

By the proofs of this theorem (see Theorem \ref{th:5}) 
and Sabbah's one \cite[Theorem 13.1]{Sabbah-2} 
we see also that for $\lambda\notin R_{f}$ 
the filtration on the only non-trivial 
cohomology group $H^{n-1}(Y_{t};\CC)_{\lambda}$ 
induced by Deligne's weight filtration on 
$H^{n-1}(Y_{t};\CC)$ is concentrated in 
degree $n-1$. Since $R_f$ is just a small part 
of the set of the eigenvalues of the monodromies 
of $Y$, Theorem \ref{BCTM} asserts that 
the geometric complexity of the family $Y$ is 
concentrated in the middle dimension 
$n-1= \dim Y_t$. This enables us to describe 
the Jordan normal forms of the middle-dimensional 
monodromies 
\[ 
\Psi_{n-1}:H^{n-1}(Y_{t};\CC)_{\lambda} \xrightarrow{\sim}
H^{n-1}(Y_{t};\CC)_{\lambda} \qquad (0<|t|< \e ) 
\]
for the eigenvalues $\lambda\notin R_{f}$ as follows,  
in terms of the equivariant limit mixed Hodge polynomials 
obtained by Stapledon's results in \cite{Stapledon}. 

\begin{theorem}\label{NJBTH}
Assume that the family $Y$ of hypersurfaces 
in $\CC^{n}$ is sch\"on. 
For $\lambda\in \CC$ and $m\geq 1$ denote by $J_{\lambda,m}$ the 
number of the Jordan blocks in the monodromy 
\[ 
\Psi_{n-1}:H^{n-1}(Y_{t};\CC)_{\lambda} \xrightarrow{\sim}
H^{n-1}(Y_{t};\CC)_{\lambda} \qquad (0<|t|< \e ) 
\]
for the eigenvalue $\lambda$ with size $m$.
Then for $\lambda\notin R_{f}$ we have
\[\sum_{m=0}^{n-1}J_{\lambda,n-m}s^{m+2}=\sum_{F\in\mathcal{S}}
s^{\dim{F}+1}l^{*}_{\lambda}(F,\nu_{f}|_{F};1) 
  \cdot \tl{l}_{P}(
\mathcal{S},F;s^{2})\]
(for the definitions of the polynomials 
$l^{*}_{\lambda}(F,\nu_{f}|_{F};u) \in \ZZ [u]$ and 
$\tl{l}_{P}( \mathcal{S},F; t) \in \ZZ [t]$ 
see Sections \ref{sec:2} and \ref{sec:3}). 
\end{theorem}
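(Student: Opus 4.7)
The plan splits into two halves. The easy half uses Theorem \ref{BCTM} to reduce the Jordan block count to dimensions of primitive parts of a monodromy weight filtration on the single cohomology group $H^{n-1}(Y_{t};\CC)_{\lambda}$. The harder half then identifies the resulting primitive Poincar\'e polynomial with the combinatorial sum on the right-hand side, using the motivic formula of Theorem \ref{RCS} together with Stapledon's explicit $E$-polynomial manipulations from \cite{Stapledon}.

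Fix $\lambda\notin R_{f}$. By Theorem \ref{BCTM}, the $\lambda$-isotypic component of $H^{*}(Y_{t};\CC)$ vanishes outside degree $n-1$, and, by the remark following its statement, Deligne's weight filtration on $H^{n-1}(Y_{t};\CC)_{\lambda}$ is pure of weight $n-1$. Thus the limit-mixed-Hodge weight filtration on this space is the monodromy weight filtration $W_{\bullet}$ of the nilpotent endomorphism $N=\log T_{u}$ (with $T_{u}$ the unipotent part of $\Phi_{n-1}$) centered at $n-1$. By the classical primitive decomposition of a nilpotent operator, the number of Jordan blocks of size exactly $m$ is $\dim P_{m-1}$, where $P_{k}=\ker\bigl(N^{k+1}\colon \operatorname{Gr}^{W}_{n-1+k}\to \operatorname{Gr}^{W}_{n-1-k-2}\bigr)$. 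Writing $p_{k}:=\dim P_{k}$ and re-indexing $k=n-m$, the left-hand side of the theorem becomes $\sum_{k\geq 0}p_{k}\,s^{n+1-k}$, the primitive Poincar\'e polynomial of $H^{n-1}(Y_{t};\CC)_{\lambda}$.

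Next, apply the $\lambda$-equivariant Hodge--Deligne realization to the motivic identity of Theorem \ref{RCS}. On the left this produces the $\lambda$-part of $\sum_{j}(-1)^{j}E\bigl(H^{j}(Y_{t};\CC)_{\lambda};u,v\bigr)$, which by Theorem \ref{BCTM} collapses to the single term $j=n-1$ and is thus (up to sign) the equivariant $E$-polynomial of $H^{n-1}(Y_{t};\CC)_{\lambda}$ itself. On the right, each $[V_{F}\circlearrowleft\hat{\mu}]$ is a sch\"on hypersurface in the torus $T_{F}$, whose $\lambda$-equivariant $E$-polynomial specialized at $u=1$ is, by construction, $l^{*}_{\lambda}(F,\nu_{f}|_{F};1)$; the ambient factors $(1-\LL)^{n-\dim F}$, once grouped through a fixed cell and summed using the combinatorial identities of Katz--Stapledon \cite{K-S-1}, \cite{K-S-2}, assemble into the polynomial $\tl{l}_{P}(\mathcal{S},F;s^{2})$ after the specialization $u=v=s$ (which collapses Hodge types into weights). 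Standard $\mathfrak{sl}_{2}$-theory then extracts the generating polynomial of the $p_{k}$'s from the full weight-graded Poincar\'e polynomial, and a routine re-indexing $m\mapsto n-m$ yields the claimed identity.

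The chief obstacle lies in the combinatorial bookkeeping of the middle step. One must verify that Stapledon's purity and Poincar\'e-duality arguments for the toric compactifications of the $V_{F}$'s, used to identify the local contributions with $l^{*}_{\lambda}(F,\nu_{f}|_{F};u)$, transfer verbatim to the present classical-toric framework of Theorem \ref{RCS}. In particular one must check that the sum of $(1-\LL)^{n-\dim F}$ factors over cells through a fixed face becomes $\tl{l}_{P}(\mathcal{S},F;s^{2})$ with exactly the power $s^{\dim F+1}$ --- the ``$+1$'' accounting for the unipotent shift of the monodromy weight filtration together with the extraction of primitive parts --- and that the boundary cells with $\relint F \not\subset \Int P$ do not contribute, thanks to $\lambda\notin R_{f}$.
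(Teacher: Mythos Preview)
Your first half is correct and matches the paper: Theorem~\ref{BCTM} concentrates everything in degree $n-1$ with pure weight, so the limit weight filtration is the monodromy filtration and Jordan block counts are read off from the primitive decomposition.

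The second half has a genuine gap: your account of how the combinatorial polynomials arise is incorrect. Applying Hodge realization to Theorem~\ref{RCS} yields only Corollary~\ref{cor:1}, namely $E_{\lambda}(Y_t;u,v)=\sum_{\relint F\subset\Int P}E_{\lambda}(V_F\circlearrowleft\hat\mu;u,v)\,(1-uv)^{n-\dim F}$ in the torus case. Two of your identifications then fail. First, $E_{\lambda}(V_F\circlearrowleft\hat\mu;u,v)$ is \emph{not} $l^*_\lambda(F,\nu_f|_F;1)$ ``by construction'': the $l^*_\lambda$ polynomials are defined purely combinatorially via weighted Ehrhart sums, and linking them to $E$-polynomials of hypersurfaces is precisely the content of Stapledon's Theorem~5.7 (Theorem~\ref{st:5.7} here), which the paper imports as a black box and which rests on Stanley's results on local $h$-vectors. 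Second, the factors $(1-\LL)^{n-\dim F}$ do \emph{not} assemble into $\tl{l}_P(\mathcal{S},F;s^2)$: under realization and $u=v=s$ they become $(1-s^2)^{n-\dim F}$, which is unrelated to $\tl{l}_P$. The polynomial $l_P(\mathcal{S},F;uv)$ enters only through the \emph{definition} $l^*_\lambda(P,\nu_f;u,v)=\sum_{F\in\mathcal{S}}v^{\dim F+1}l^*_\lambda(F,\nu_f|_F;uv^{-1})\cdot l_P(\mathcal{S},F;uv)$, and $\tl{l}_P$ is then the formal primitive extraction of the symmetric unimodal polynomial $l_P$. The paper's actual route is: invoke Theorem~\ref{st:5.7}; observe that for $\lambda\notin R_f$ only the $Q=P$ term of $h^*_\lambda$ survives, giving $E_\lambda(Y_t;u,v,w)=(-1)^{n-1}(uv)^{-1}w^{n-1}l^*_\lambda(P,\nu_f;u,v)$ (Theorem~\ref{EQEEP} in the torus case, Theorem~\ref{th:syucyuu} for $\CC^n$ after decomposing into coordinate tori and cancelling); expand via Definition~\ref{def:poly}; set $u=v=s$; and extract primitives term by term. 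You also apply Theorem~\ref{RCS}, which concerns $(\CC^*)^n$, directly to the $\CC^n$ family of Theorem~\ref{NJBTH} without the torus-decomposition step of Theorem~\ref{th:syucyuu}.
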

The proof of Theorem \ref{BCTM} relies on 
the decompositions of nearby cycle 
perverse sheaves associated to normal crossing 
divisors and we apply it on some smooth 
toric varieties. 
For the proof, we are also indebted to 
\cite[Theorem 5.7]{Stapledon} which is 
proved by using \cite[Section 2]{M-T-4} and 
some deep results on combinatorics 
developed by Katz-Stapledon in \cite{K-S-1} 
that build on earlier work of Stanley~\cite{Stanley}. 
Theorem \ref{BCTM} holds true 
without any assumption on the shape of 
the Newton polytope $P=p(\UH_{f}) \subset \RR^n$. 
In particular, we do not require here that $P$ 
is convenient as in \cite{Stapledon}. The convenience of $P$ 
of \cite{Stapledon} is stronger than the usual one 
and we cannot expect it in general (see Remark~\ref{StaCom}). For the treatment of 
the non-convenient case, we have to prove the 
topological concentration 
\[H^{j}(Y_{t};\CC)_{\lambda} 
\simeq 0 \qquad ( \lambda\notin R_{f}, j \not= n-1)\]
in Theorem \ref{BCTM} which does not 
follow from the results in 
Danilov-Khovanskii \cite{D-K} and 
Stapledon \cite{Stapledon}. 
Moreover in Section \ref{sec:5}, we also extend 
these results to 
families of sch\"{o}n complete intersection subvarieties 
in $\CC^n$ and obtain a formula for the 
Jordan normal forms of their monodromies. To our surprise, 
the results that we obtain in this generalized 
situation are completely parallel to the ones 
for families of hypersurfaces in $\CC^n$. 
See Section \ref{sec:5} for the details. 
\\\\\noindent{\bf Acknowledgement:}
The authors thank 
Professors Masaharu Ishikawa and 
Sampei Usui for their encouragement. 
They are grateful also to the anonymous referee 
whose comments improved their paper substantially.

\section{Preliminary notions and results}\label{sec:2}

In this section, we introduce some preliminary notions and results 
which will be used in this paper. 

\subsection{Motivic nearby fibers}
Throughout this paper we consider only varieties 
over the field $\CC$ of complex numbers. 
From now we shall introduce the theory of motivic 
nearby fibers of Denef-Loeser \cite{D-L-1}, 
\cite{D-L-2} and Guibert-Loeser-Merle \cite{G-L-M} 
in this special case (see also Raibaut \cite{R-2}). 
For a variety $S$ denote by $\KK_0(\Var_{S})$ 
the  Grothendieck ring of varieties over $S$. 
Recall that the ring structure is defined by 
the fiber products over $S$. 
Moreover we denote by $\M_{S}$ the ring obtained 
from it by inverting the 
Lefschetz motive $\LL\simeq \CC \times S \in 
\KK_0(\Var_{S})$. If $S= \Spec ( \CC )$ we denote 
$\KK_0(\Var_{S})$ and $\M_{S}$ simply by 
$\KK_0(\Var_{\CC})$ and $\M_{\CC}$ respectively. 
Note that $\M_S$ has a natural structure of an 
$\M_{\CC}$-module. For $d \in \ZZ_{>0}$, let 
$\mu_d = \{ \zeta \in \CC \ | \ 
\zeta^d=1 \}  \simeq \ZZ/\ZZ d$ 
be the multiplicative 
group consisting of $d$ roots of unity 
in $\CC$. We denote by $\hat{\mu}$ the 
projective limit $\underset{d}{\varprojlim} 
\mu_d$ of the projective system 
$\{ \mu_i \}_{i \geq 1}$ with morphisms 
$\mu_{id} \longrightarrow \mu_i$ 
given by $t \longmapsto t^d$. 
Then we define the Grothendieck ring 
$\KK_0^{\hat{\mu}}(\Var_{S})$ of 
varieties over $S$ with good 
$\hat{\mu}$-actions and its localization 
$\M_{S}^{\hat{\mu}}$ 
as in \cite[Section 2.4]{D-L-2}. 
Note that $\M_{S}^{\hat{\mu}}$ is naturally 
a $\M_{\CC}$-module. Recall also 
that for a morphism 
$\pi : S \longrightarrow S^{\prime}$ 
of varieties we have a group homomorphism 
\begin{equation}
\pi_!: 
\M_{S}^{\hat{\mu}} \longrightarrow 
\M_{S^{\prime}}^{\hat{\mu}}
\end{equation}
obtained by the composition with $\pi$. 
Now let $Z$ be a smooth variety and 
$U \subset Z$ a Zariski open subset 
such that $D=Z \setminus U$ is a 
normal crossing divisor in $Z$. 
Moreover let $f:Z \longrightarrow \CC$ 
be a regular function on $Z$ such that 
$f^{-1}(0) \subset Z$ is contained in $D$. 
Denote by $D^{\prime} \subset D$ the 
union of irreducible components of $D$ 
which are not contained in $f^{-1}(0)$ 
and set $\Omega = Z \setminus D^{\prime}$. 
Then we have $U \subset \Omega$. 
Let $E_1, E_2, \ldots, E_k$ 
be the irreducible components of 
the normal crossing divisor $\Omega 
\cap f^{-1}(0)$ in $\Omega \subset Z$. 
For each $1 \leq i \leq k$, let $b_i>0$ be 
the order of the zero of $f$ 
along $E_i$. For a non-empty subset $I 
\subset \{1,2,\ldots, k\}$, let us 
set
\begin{equation}
E_I=\bigcap_{i \in I} E_i,\quad
E_I^{\circ}=E_I \setminus \bigcup_{i 
\not\in I}E_i
\end{equation}
and $d_I= {\rm gcd} (b_i)_{i \in I}>0$. Then, 
as in \cite[Section 3.3]{D-L-2}, we 
can construct an unramified Galois 
covering $\tl{E_I^{\circ}} 
\longrightarrow E_I^{\circ}$ of $E_I^{\circ}$ 
as follows. First, for a point 
$p \in E_I^{\circ}$ we take an affine open 
neighborhood $W \subset \Omega 
\setminus ( \cup_{i \notin I} E_i)$ of $p$ 
on which there exist regular functions 
$\xi_i$ $(i \in I)$ 
such that $E_i \cap W=\{ \xi_i=0 \}$ 
for any $i \in I$. Then on $W$ we have 
$f=f_{1,W}(f_{2,W})^{d_I}$, 
where we set 
$f_{1,W}=f \prod_{i \in I}\xi_i^{-b_i}$ 
and $f_{2,W}=\prod_{i 
\in I} \xi_i^{\frac{b_i}{d_I}}$. 
Note that $f_{1,W}$ is a unit on $W$ 
and $f_{2,W} \colon W \longrightarrow 
\CC$ is a regular function. It is 
easy to see that $E_I^{\circ}$ is covered 
by such affine open subsets $W$ of 
$\Omega \setminus ( \cup_{i \notin I} E_i)$. 
Then as in \cite[Section 3.3]{D-L-2} 
by gluing the varieties
\begin{equation}\label{eq:6-26}
\tl{E_{I,W}^{\circ}}=\{(t,z) \in 
\CC^* \times (E_I^{\circ} \cap W) \ |\ 
t^{d_I} =(f_{1,W})^{-1}(z)\}
\end{equation}
together in an obvious way, 
we obtain the variety $\tl{E_I^{\circ}}$ over 
$E_I^{\circ}$. This 
unramified Galois covering 
$\tl{E_I^{\circ}}$ of $E_I^{\circ}$ 
admits a natural $\mu_{d_I}$-action 
defined by assigning the automorphism 
$(t,z) \longmapsto (\zeta_{d_I} t, z)$ 
of $\tl{E_I^{\circ}}$ to the generator 
$\zeta_{d_I}:=\exp 
(2\pi\sqrt{-1}/d_I) \in \mu_{d_I}$. Namely 
the variety $\tl{E_I^{\circ}}$ is 
endowed with a good $\hat{\mu}$-action 
in the sense of \cite[Section 
2.4]{D-L-2} and defines an element 
$[\tl{E_I^{\circ}}]$ of 
$\M_{f^{-1}(0)}^{\hat{\mu}}$. Finally 
we set 
\begin{equation}\label{MNFF}
\mathcal{S}_{f,U}= \sum_{I \neq \emptyset} 
(1-\LL)^{|I| -1} \cdot 
[\tl{E_I^{\circ}}] \in 
\M_{f^{-1}(0)}^{\hat{\mu}}.
\end{equation}
Then we have the following result. 

\begin{theorem}[{\cite[Theorem 3.9]{G-L-M}}]\label{GLMT} 
Let $X$ be a variety and $g:X \longrightarrow \CC$ 
a regular function on it. Then 
there exists a morphism of $\M_{\CC}$-modules 
\begin{equation}
\psi_g: \M_X \longrightarrow 
\M_{g^{-1}(0)}^{\hat{\mu}}
\end{equation}
such that for 
any proper morphism $\pi : Z \longrightarrow X$ 
from a smooth variety $Z$ and a Zariski open subset 
$U \subset Z$ whose complement $D=Z \setminus U$ is a 
normal crossing divisor in $Z$ containing 
$(g \circ \pi )^{-1}(0)$ we have the equality 
\begin{equation}
\psi_g ([U \longrightarrow X])= 
(\pi|_{(g \circ \pi )^{-1}(0)})_! 
(\mathcal{S}_{g \circ \pi,U})
\end{equation}
in $\M_{g^{-1}(0)}^{\hat{\mu}}$. 
\end{theorem}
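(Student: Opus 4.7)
The plan is to construct $\psi_g$ on a convenient set of generators of $\M_X$ and then verify compatibility with all relations. First I would invoke Bittner's presentation of $\KK_0(\Var_X)$, which says that this Grothendieck ring is generated by classes $[Z \to X]$ with $Z$ smooth and proper over $\CC$, modulo the blow-up relation $[\widetilde{Z}] - [E] = [Z] - [C]$, where $\widetilde{Z} \to Z$ is the blow-up along a smooth closed subvariety $C \subset Z$ with exceptional divisor $E$. A refinement of Bittner's presentation, well-suited to our setting, allows us to take models of the form $(Z, U, \pi)$ with $Z$ smooth proper, $D = Z \setminus U$ a normal crossing divisor, $\pi \colon Z \to X$ proper, and $(g \circ \pi)^{-1}(0) \subset D$; the relations are then restricted to blow-ups along smooth centers $C$ that meet $D$ transversely.

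Second, I would define $\psi_g([U \to X]) := (\pi|_{(g \circ \pi)^{-1}(0)})_{!}(\mathcal{S}_{g \circ \pi, U})$ on each such generator and then reduce the well-definedness of $\psi_g$ to two facts: (a) independence of the chosen model $(Z, U, \pi)$, and (b) compatibility with the blow-up relation. By Hironaka's theorem together with the weak factorization theorem, any two admissible models can be connected by a finite chain of blow-ups and blow-downs along smooth centers transverse to the NCD, so both (a) and (b) collapse into the single local computation: how does the formula
\[ \mathcal{S}_{g \circ \pi, U} = \sum_{I \neq \emptyset}(1 - \LL)^{|I|-1}\cdot [\widetilde{E_I^{\circ}}] \]
transform under a single admissible blow-up $\sigma \colon \widetilde{Z} \to Z$ along a smooth center $C \subset E_{I_0} := \bigcap_{i \in I_0} E_i$ with $C$ transverse to the remaining components?

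The hard part will be carrying out this blow-up computation in the equivariant Grothendieck ring. Let $\widetilde{E}$ be the exceptional divisor of $\sigma$; it becomes a new irreducible component of the new NCD and the multiplicity of $g \circ \pi \circ \sigma$ along $\widetilde{E}$ equals $b_{\widetilde{E}} = \sum_{i \in I_0} b_i$. One must (i) match the stratification of $\widetilde{Z}$ by the new $\widetilde{E}_{J}^{\circ}$ against the strata $E_I^{\circ}$ of $Z$, (ii) analyze the gcd's $d_J$ that appear in the refined covers, and (iii) verify on local affine charts of the form $f = f_{1,W}\prod_{i \in I}\xi_i^{b_i}$ that the new Galois covers $\widetilde{\widetilde{E}_J^{\circ}}$, with their $\hat{\mu}$-actions, fit together so that the excess contribution is exactly an $\mathbb{A}^1$-bundle whose class is absorbed by the $(1-\LL)$ factors. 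This is the only nontrivial step; once it is done, $\psi_g$ is well defined on generators, respects the Bittner relations (by restricting the same local identity to $U$), and the verification of $\M_{\CC}$-linearity is routine, since multiplying by a class $[V] \in \M_{\CC}$ corresponds to taking the product $Z \times V$, which merely multiplies each $\widetilde{E_I^{\circ}}$ by $V$ (with trivial $\hat{\mu}$-action) and commutes with the defining sum for $\mathcal{S}_{g \circ \pi, U}$. This would complete the construction of $\psi_g$ and establish the asserted formula for any admissible model.
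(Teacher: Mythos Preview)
The paper does not prove this theorem at all: it is quoted verbatim as \cite[Theorem 3.9]{G-L-M} and used as a black box throughout the rest of the paper. So there is no ``paper's own proof'' to compare your proposal against.

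That said, your outline is a reasonable and essentially correct sketch of how one would establish such a statement. The strategy via Bittner's presentation plus weak factorization is standard for constructing additive invariants on $\M_X$, and the key step you identify --- checking invariance of $\mathcal{S}_{g\circ\pi,U}$ under a single admissible blow-up in smooth center transverse to the boundary --- is indeed the crux. The original argument in Guibert--Loeser--Merle is organized somewhat differently (they build $\psi_g$ through motivic integration on arc spaces and then identify it with the explicit formula $\mathcal{S}_{g\circ\pi,U}$ a posteriori), but the Bittner/weak-factorization route you describe is a known and valid alternative. One caveat: in step (iii) you gloss over the matching of the $\hat{\mu}$-actions on the covers before and after blow-up; this is where most of the actual work lies, and ``the excess contribution is exactly an $\mathbb{A}^1$-bundle'' is not quite the right bookkeeping --- one typically gets a sum over the new strata indexed by subsets containing the exceptional divisor, and the cancellation uses the relation $[\mathbb{A}^1 \setminus \{0\}, \mu_d] = \LL - 1$ in $\M_{\CC}^{\hat\mu}$ rather than a literal $\mathbb{A}^1$-bundle. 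But the overall architecture is sound.
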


\begin{definition}
In the situation of Theorem \ref{GLMT}, 
for a variety $[Y \longrightarrow X] \in \M_X$ 
over $X$ we call $\psi_g ([Y \longrightarrow X]) \in 
\M_{g^{-1}(0)}^{\hat{\mu}}$ the motivic nearby 
fiber of $Y$ by $g$ and denote it simply by 
$\psi_g ([Y])$. 
\end{definition}

Following the notations in 
\cite[Sections 3.1.2 and 3.1.3]{D-L-2}, 
we denote by $\HSm$ the abelian 
category of Hodge structures with a 
quasi-unipotent endomorphism.  Let
\begin{equation}
\chi_h \colon \M_{\CC}^{\hat{\mu}} 
\longrightarrow \KK_0(\HSm)
\end{equation}
be the Hodge characteristic morphism 
defined in \cite{D-L-2} which 
associates to a variety $Z$ with a 
good $\mu_d$-action the Hodge structure
\begin{equation}
\chi_h ([Z])=\sum_{j \in \ZZ} (-1)^j 
[H_c^j(Z;\QQ)] \in \KK_0(\HSm)
\end{equation}
with the actions induced by the one 
$z \longmapsto \exp (2\pi\sqrt{-1}/d)z$ 
($z\in Z$) on $Z$. We can generalize this 
construction as follows. For a variety $X$ 
let ${\rm MHM}_X$ be the abelian category of 
mixed Hodge modules on $X$ (see \cite{MHM}) and 
$\KK_0({\rm MHM}_X)$ its Grothendieck ring. 
Then there exists a group homomorphism 
\begin{equation}
H_X \colon \M_X 
\longrightarrow \KK_0({\rm MHM}_X)
\end{equation}
such that for 
any morphism $\pi \colon Z \longrightarrow X$ 
from a smooth variety $Z$ and 
the trivial Hodge module $\QQ^{H}_Z$ on it 
we have 
\begin{equation}
H_X ([Z \longrightarrow X])=
\sum_{j \in \ZZ} (-1)^j [H^jR \pi_!( \QQ^{H}_Z)]. 
\end{equation}
Here the Grothendieck ring 
$\KK_0({\rm MHM}_X)$ has a natural $\M_{\CC}$-module 
structure defined by the Hodge realization map 
$H\colon \M_{\CC} \longrightarrow 
\KK_0({\rm MHM}_{\Spec ( \CC )})$ and $H_X$ is moreover 
$\M_{\CC}$-linear. By using the abelian category 
${\rm MHM}_X^{\rm mon}$ of 
mixed Hodge modules on $X$ with a finite order 
automorphism and its Grothendieck ring 
$\KK_0({\rm MHM}_X^{\rm mon})$ we have also a 
group homomorphism 
\begin{equation}
H_X^{\rm mon} \colon \M_X^{\hat{\mu}}  
\longrightarrow \KK_0({\rm MHM}_X^{\rm mon})
\end{equation}
(see \cite{G-L-M} and \cite{R-2} for the details). 

\begin{proposition}[{\cite[Proposition 3.17]{G-L-M}}]\label{GLMCD} 
In the situation of Theorem \ref{GLMT} 
there exists a commutative diagram 
\begin{equation}
\begin{CD}
\M_X  @>{\psi_g}>> \M_{g^{-1}(0)}^{\hat{\mu}} 
\\
@V{H_X}VV   @VV{H_{g^{-1}(0)}^{\rm mon}}V
\\
\KK_0({\rm MHM}_X) @>>{\Psi_g}> 
\KK_0({\rm MHM}_{g^{-1}(0)}^{\rm mon}), 
\end{CD}
\end{equation}
where $\Psi_g$ is induced by the nearby cycles 
of mixed Hodge modules. 
\end{proposition}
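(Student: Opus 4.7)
The plan is to reduce the commutativity to a check on a generating set. By Hironaka's resolution of singularities together with the blow-up and additivity relations in $\M_X$, every class in $\M_X$ is an $\M_{\CC}$-linear combination of classes of the form $[U\longrightarrow X]$ coming from a proper morphism $\pi\colon Z \longrightarrow X$ with $Z$ smooth and $U = Z\setminus D$ the complement of a normal crossing divisor $D \subset Z$ containing $(g\circ\pi)^{-1}(0)$. Since $\psi_g$, $H_X$, $\Psi_g$, and $H_{g^{-1}(0)}^{\rm mon}$ are all $\M_{\CC}$-linear, it suffices to verify
\[ H_{g^{-1}(0)}^{\rm mon}(\psi_g([U\longrightarrow X])) = \Psi_g(H_X([U\longrightarrow X])) \]
on such a generator.

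On the left, Theorem \ref{GLMT} together with the definition of $\mathcal{S}_{g\circ\pi,U}$ and the commutativity of $H^{\rm mon}$ with proper pushforward yields
\[ H_{g^{-1}(0)}^{\rm mon}(\psi_g([U\longrightarrow X])) = \sum_{I\neq\emptyset}(1-\LL)^{|I|-1}\,(\pi|_{(g\circ\pi)^{-1}(0)})_!\,H_{(g\circ\pi)^{-1}(0)}^{\rm mon}([\tl{E_I^\circ}]). \]
On the right, M.~Saito's nearby-cycle functor $\Psi$ for mixed Hodge modules commutes with proper direct image, so
\[ \Psi_g(H_X([U\longrightarrow X])) = (\pi|_{(g\circ\pi)^{-1}(0)})_!\,\Psi_{g\circ\pi}(\QQ^H_U). \]
The problem thus reduces to the Hodge-module version of the A'Campo/Steenbrink formula
\[ \Psi_{g\circ\pi}(\QQ^H_U) = \sum_{I\neq\emptyset}(1-\LL)^{|I|-1}\,H_{(g\circ\pi)^{-1}(0)}^{\rm mon}([\tl{E_I^\circ}]) \]
in $\KK_0({\rm MHM}^{\rm mon}_{(g\circ\pi)^{-1}(0)})$.

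To establish this identity, stratify $\Omega\cap(g\circ\pi)^{-1}(0)$ by the locally closed strata $E_I^\circ$ and proceed by inclusion-exclusion in the Grothendieck group. On a local toric chart $W$ around a point of $E_I^\circ$, the factorization $g\circ\pi = f_{1,W}(f_{2,W})^{d_I}$ with $f_{1,W}$ a unit and $f_{2,W}=\prod_{i\in I}\xi_i^{b_i/d_I}$ puts $g\circ\pi$ in the ``unit times monomial'' normal form. For such local models Saito's theory furnishes an explicit computation of $\Psi$: the underlying perverse sheaf of $\Psi_{g\circ\pi}(\QQ^H_U)$ restricted to $E_I^\circ\cap W$ is the direct image of the constant Hodge module along the $d_I$-fold cyclic cover $\tl{E_{I,W}^\circ}\to E_I^\circ\cap W$ defined by $t^{d_I}=f_{1,W}^{-1}$, tensored with the Hodge realization of the transverse Milnor fiber of the pure monomial $\prod_{i\in I}\xi_i^{b_i}$. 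This transverse Milnor fiber is homotopy equivalent to a real $(|I|-1)$-torus, and its Hodge class contributes precisely the factor $(1-\LL)^{|I|-1}$ in $\KK_0({\rm MHM}^{\rm mon})$. The semisimple part of the monodromy on the nearby cycles is identified, via the local model, with the geometric $\mu_{d_I}$-action $(t,z)\mapsto(\zeta_{d_I}t,z)$ used to define $[\tl{E_I^\circ}]\in\M_{(g\circ\pi)^{-1}(0)}^{\hat\mu}$.

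The main obstacle is the normalization in this last monodromy comparison: one must verify that the geometric cyclic action on $\tl{E_I^\circ}$ matches, under the Hodge realization $H^{\rm mon}$, the semisimple part of the Milnor monodromy built into Saito's $\Psi_{g\circ\pi}$, with compatible choices of roots of unity, Tate twists, and orientations. Once this local matching is secured in the one-variable model $g\circ\pi=\xi^b$ and propagated to the mixed normal-crossing case by a transverse K\"unneth argument, gluing over the strata $E_I^\circ$ and summing over $I\neq\emptyset$ yields the required identity and hence the commutativity of the diagram.
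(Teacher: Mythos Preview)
The paper does not give its own proof of this proposition: it is simply quoted as \cite[Proposition 3.17]{G-L-M} with no argument supplied. There is therefore nothing in the paper to compare your proposal against.

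That said, your outline is the standard route to such a compatibility statement and is essentially what underlies the proof in \cite{G-L-M}. Reducing via resolution of singularities to classes $[U\to X]$ with $U$ the complement of a normal crossing divisor, invoking the explicit formula of Theorem~\ref{GLMT} on the motivic side, and comparing with Saito's description of $\Psi$ in the normal crossing situation is exactly the right strategy. The one place where your write-up remains a sketch rather than a proof is the paragraph you yourself flag as the ``main obstacle'': the precise matching of the semisimple monodromy on Saito's nearby cycles with the geometric $\mu_{d_I}$-action on $\tl{E_I^{\circ}}$, including the bookkeeping of Tate twists and the identification of the transverse factor with $(1-\LL)^{|I|-1}$, requires a genuine local computation (carried out, for instance, in \cite{D-L-1} and \cite{G-L-M}) rather than just an appeal to ``Saito's theory furnishes an explicit computation''. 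If you want a self-contained argument you should either reproduce that local computation or cite it precisely; otherwise your proposal is a correct plan but not yet a proof.
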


\subsection{Equivariant Ehrhart theory of Katz and Stapledon}\label{polysec}

In this subsection, we recall the definition of 
some polynomials introduced by 
Katz-Stapledon~\cite{K-S-1} and Stapledon~\cite{Stapledon} 
in their Equivariant Ehrhart theory. 
Throughout this paper, we regard the empty set 
$\emptyset$ as a $(-1)$-dimensional polytope,
and as a face of any polytope.
Let $P$ be a polytope.
If a subset $F\subset P$ is a face of $P$, we write $F\prec P$.
For a pair of faces $F\prec F' \prec P$ of $P$,
we denote by $[F,F']$ the face poset $\{F''\prec 
P\mid F\prec F''\prec F'\}$,
and by $[F,F']^{*}$ a poset which is equal to 
$[F,F']$ as a set with the reversed order.

\begin{definition}
Let $B$ be the poset $[F,F']$ or $[F,F']^{*}$.
One defines a polynomial $g(B,t)$ of degree 
$\leq(\dim F' -\dim F)/2$ as follows.
If $F = F'$, one sets $g(B;t)=1$.
If $F \neq F'$ and $B=[F,F']$ (resp. $B=[F,F']^{*}$), 
the coefficients of $g(B;t)$ are uniquely 
determined by the equality 
\begin{align*}
t^{\dim{F'}-\dim{F}}g(B;t^{-1})=\sum_{F''\in[F,F']}
(t-1)^{\dim{F'}-\dim{F''}}g([F,F''];t).
\\ ({\rm resp.}~t^{\dim{F'}-\dim{F}}g(B;t^{-1})=
\sum_{F''\in[F,F']^{*}}(t-1)^{\dim{F''}-\dim{F}}g([F'',F']^{*};t).) 
\end{align*}
\end{definition}

In what follows, we assume that $P$ is a lattice polytope in $\RR^n$.
Let $S$ be a subset of $P\cap \ZZ^n$ containing the 
vertices of $P$, and $\omega \colon S\to \ZZ$ be a function.
We denote by $\UH_{\omega}$ the convex hull in $\RR^n\times \RR$ 
of the set $\{(v,s)\in \RR^n\times\RR \mid v\in S, s\geq \omega(v)\}$.
Then, the set of all the projections of the bounded faces of $\UH_{\omega}$ 
to $\RR^n$ defines a lattice polyhedral subdivision $\mathcal{S}$ of $P$.
Here a lattice polyhedral subdivision $\mathcal{S}$ 
of a polytope $P$ is a set of some polytopes in $P$
such that the intersection of any two polytopes in $\mathcal{S}$ is a 
face of both and all vertices of any polytope 
in $\mathcal{S}$ are in $\ZZ^{n}$. 
Moreover, the set of all the bounded faces of $\UH_{\omega}$ defines 
a piecewise $\QQ$-affine convex function $\nu\colon P\to \RR$.
For a cell $F\in\mathcal{S}$, we denote by 
$\sigma(F)$ the smallest face of $P$ containing $F$,
and $\LK_{\mathcal{S}}(F)$ the set of all cells 
of $\mathcal{S}$ containing $F$.
We call $\LK_{\mathcal{S}}(F)$ the link of $F$ in $\mathcal{S}$.
Note that $\sigma(\emptyset)=\emptyset$ and 
$\LK_{\mathcal{S}}(\emptyset)=\mathcal{S}$.

\begin{definition}
For a cell $F\in \mathcal{S}$, the $h$-polynomial 
$h(\LK_{\mathcal{S}}(F);t)$ 
of the link $\LK_{\mathcal{S}}(F)$ of $F$ is defined by
\[t^{\dim{P}-\dim{F}}h(\LK_{\mathcal{S}}(F);t^{-1})=
\sum_{F'\in \LK_{\mathcal{S}}(F)}g([F,F'];t)(t-1)^{\dim{P}-\dim{F'}}.\]
The local $h$-polynomial $l_{P}(\mathcal{S},F;t)$ 
of $F$ in $\mathcal{S}$ is defined by
\[l_{P}(\mathcal{S},F;t)=\sum_{\sigma(F)
\prec Q\prec P}(-1)^{\dim{P}-\dim Q}h(
\LK_{{\mathcal{S}}|_{Q}}(F);t) \cdot g([Q,P]^{*};t).\]
\end{definition}

For $\lambda\in \CC$ and 
$v\in mP\cap{\ZZ^n}$ ($m\in \ZZ_+:=\ZZ_{\geq 0}$) we set 
\begin{align*}
w_{\lambda}(v)=
\left\{
\begin{array}{ll}
1& \Bigl( \exp\ \bigl( 2\pi\sqrt{-1}\cdot m\nu(\frac{v}{m})\bigr)=
\lambda \Bigr) \\\\
0&( \text{otherwise} ). \\
\end{array}
\right.
\end{align*}
One defines the $\lambda$-weighted Ehrhart polynomial 
$f_{\lambda}(P,\nu;m)\in\ZZ[m]$ 
of $P$ with respect to $\nu\colon P\to \RR$ by
\[f_{\lambda}(P,\nu;m):=\sum_{v\in {mP}\cap\ZZ^{n}}w_{\lambda}(v).\]
Then $f_{\lambda}(P,\nu;m)$ is a polynomial in $m$ 
with coefficients $\ZZ$ 
whose degree is $\leq\dim P$ (see \cite{Stapledon}).  

\begin{definition}[{\cite{Stapledon}}]
\begin{enumerate}
\item One defines the $\lambda$-weighted $h^{*}$-polynomial 
$h^{*}_{\lambda}(P,\nu;u)
\in \ZZ[u]$ by 
\[\sum_{m\geq 0}f_{\lambda}(P,\nu;m)u^{m}=
\frac{h^{*}_{\lambda}(P,\nu;u)}{(1-u)^{\dim{P}+1}}.\]
If $P$ is the empty polytope, we set $h^{*}_{1}
(P,\nu;u)=1$ and $h^{*}_{\lambda}(P,\nu;u)=0~(\lambda\neq 1)$.
\item One defines the $\lambda$-local weighted 
$h^{*}$-polynomial $l^{*}_{\lambda}(P,\nu;u)\in\ZZ[u]$ by
\[l^{*}_{\lambda}(P,\nu;u)=\sum_{Q\prec P}
(-1)^{\dim{P}-\dim{Q}}h^{*}_{\lambda}(Q,\nu|_{Q};u)
 \cdot g([Q,P]^{*};u).\]
If $P$ is the empty polytope, we set $l^{*}_{1}(P,\nu;u)=1$ 
and $l^{*}_{\lambda}(P,\nu;u)=0~(\lambda\neq 1)$.
\end{enumerate}
\end{definition} 

\begin{definition}[{\cite{Stapledon}}]\label{def:poly}
\begin{enumerate}
\item One defines the $\lambda$-weighted limit mixed 
$h^{*}$-polynomial $h^{*}_{\lambda}(P,\nu;u,v)\in\ZZ[u,v]$ by
\[h^{*}_{\lambda}(P,\nu;u,v):=\sum_{F\in\MCS}v^{\dim{F}+1}
l^{*}_{\lambda}(F,\nu|_{F};uv^{-1}) \cdot h(\LK_{\MCS}(F);uv).\]
\item One defines the $\lambda$-local weighted limit mixed 
$h^{*}$-polynomial $l^{*}_{\lambda}(P,\nu;u,v)\in\ZZ[u,v]$ by
\[l^{*}_{\lambda}(P,\nu;u,v):=\sum_{F\in\MCS}v^{\dim{F}+1}
l^{*}_{\lambda}(F,\nu|_{F};uv^{-1}) \cdot l_{P}(\MCS,F;uv).\]
\item One defines the $\lambda$-weighted refined limit mixed 
$h^{*}$-polynomial $h^{*}_{\lambda}(P,\nu;u,v,w)\in\ZZ[u,v,w]$ by
\[h^{*}_{\lambda}(P,\nu;u,v,w):=\sum_{Q\prec P}
w^{\dim{Q}+1}l^{*}_{\lambda}(Q,\nu|_{Q};u,v) \cdot g([Q,P];uvw^2).\]
\end{enumerate}
\end{definition}

\section{Monodromies and limit mixed Hodge structures of families of 
hypersurfaces  in algebraic tori $(\CS)^n$}\label{sec:3}

Let $\BK=\CC(t)$ be the field of rational functions of $t$ and
$f(t,x)=\sum_{v \in \ZZ^n}a_{v}(t)x^{v} \in 
\BK[x_{1}^{\pm},\dots,x_{n}^{\pm}]\ (a_{v}(t)\in\BK)$
a Laurent polynomial of $x=(x_{1},\dots,x_{n})$ with coefficients in $\BK$.
For $v\in\ZZ^n$ by the Laurent expansion $a_{v}(t)=\sum_{j\in\ZZ}a_{v,j}
t^j\ (a_{v,j}\in\CC)$ of the rational function $a_{v}(t)$
we set
\[ o(v):=\ord_{t}a_{v}(t)=\min\{j \ | \ a_{v,j}\neq 0\}.\]
If $a_{v}(t) \equiv 0$ we set $o(v)=+\infty$.
Then we define an (unbounded) polyhedron $\UH_{f}$ in $\RR^{n+1}$ by
\[\UH_{f} = \Conv{\Bigl[ \bigcup_{v\in\ZZ^n} 
\{(v,s) \in \RR^{n+1} \ | \ s \geq o(v)\}  
\Bigr]} \subset \RR^{n+1},\]
where $\Conv{(\ \cdot\ )}$ stands for the convex hull.
Throughout this paper we assume that the dimension of $\UH_{f}$ is $n+1$.
Then by the projection $p:\RR^{n+1}=\RR^n\times\RR^{1}\twoheadrightarrow 
\RR^n$ we obtain an $n$-dimensional polytope $P:=p(\UH_{f}) 
\subset \RR^n$. 
We call it the Newton polytope of $f$. 
Set $\RR_+:= \RR_{\geq 0} \subset \RR$. 
Let $\Sigma_{0}$ be the dual fan of 
$\UH_{f}$ in $\RR^n\times\RR^{1}_{+}\subset \RR^{n+1}.$
We call its subfan $\Xi_{0}$ in 
$\RR^n\simeq \RR^n\times \{0\}$ consisting of cones 
$\sigma \in \Sigma_{0}$ contained in $\RR^n\times \{0\}\subset \RR^{n+1}$
the recession fan of $\UH_{f}$.
Let $\nu_{f}:P\to \RR$ be the function 
defining the bottom part of the boundary 
$\partial \UH_{f}$ of $\UH_{f}$ and $\mathcal{S}$ 
the subdivision of $P$ by the lattice polytopes
$p(\widetilde{F})\subset \RR^n\ (\widetilde{F}\prec \UH_{f})$.
Then for each cell $F$ in $\mathcal{S}$ the 
restriction $\nu_{f}|_{F}$ of $\nu_{f}$ 
to $F\subset P$ is an affine $\QQ$-linear 
function taking integral values on the vertices of $F$. 
Let us identify the affine subspace 
$\RR^n \times \{ 1 \} \subset \RR^{n+1}$ of 
$\RR^{n+1}$ with $\RR^n$ by the projection. 
Then by using the cones $\sigma \in \Sigma_0$ 
such that $\dim \sigma <n+1$ we define a 
polyhedral hypersurface ${\rm Trop}(Y)$ in 
$\RR^n \times \{ 1 \} \simeq \RR^n$ by 
\[ {\rm Trop}(Y) = \bigcup_{\dim \sigma <n+1} 
\Bigl\{ \sigma \cap (\RR^n \times \{ 1 \}) \Bigr\} \subset 
\RR^n \times \{ 1 \} \simeq \RR^n. \]
We call it the tropical variety of $Y$ 
(see \cite{R-S-T}). 
It has a decomposition 
\[ {\rm Trop}(Y) = \bigsqcup_{\dim \sigma <n+1} 
\Bigl\{ 
\relint \sigma \cap (\RR^n \times \{ 1 \}) 
\Bigr\} \]
into the (locally closed) 
cells $\relint \sigma \cap (\RR^n \times \{ 1 \})
\subset {\rm Trop}(Y)$. It is clear that 
there exists a one to one correspondence 
between the cells $F$ in $\mathcal{S}$ 
such that $\dim F >0$ and those in ${\rm Trop}(Y)$. 
In \cite{Stapledon} the author used the cell 
decomposition of 
${\rm Trop}(Y)$ to express the motivic nearby 
fiber $\psi_t([Y])$. However in this paper, 
we use only the subdivision $\mathcal{S}$ of $P$. 
For a cell $F$ in $\mathcal{S}$ by taking the (unique) 
compact face $\widetilde{F}\prec \UH_{f}$ 
of $\UH_{f}$ such that 
$F=p(\widetilde{F})$ we define the initial Laurent polynomial 
$I^{F}_{f}(x)\in\CC[x_{1}^{\pm},\dots, x_{n}^{\pm}]$ by
\[I^{F}_{f}(x) = \sum_{(v,j)\in\widetilde{F}}
a_{v,j}x^{v}\in\CC[x_{1}^{\pm},\dots, x_{n}^{\pm}].\]
By identifying $\Aff (F)\cap\ZZ^n$ with $\ZZ^{\dim{F}}$ 
we may consider $I^{F}_{f}(x)$
as a Laurent polynomial on the algebraic torus 
$T_{F}=\Spec(\CC[ \Aff (F) \cap\ZZ^n])\simeq (\CS)^{\dim F}$.
We denote by $V_{F}\subset T_{F}$ the 
hypersurface defined by $I^{F}_{f}(x)$ in $T_{F}$.
By the affine $\QQ$-linear extension $\nu_{F}:
\Aff(F)\to \RR$ of $\nu_{f}|_{F}$ to 
$\Aff (F) \simeq \RR^{\dim F}$ we define an 
element $e_{F}$ of the algebraic torus
$T_{F}=\Spec(\CC[ \Aff (F) \cap\ZZ^n])\simeq 
\Hom_{{\rm group}}( \Aff (F) \cap\ZZ^n,\CS)$ by
\[e_{F}(v)=\exp \Bigl(- 2\pi \sqrt{-1}\nu_{F}(v) \Bigr)\ 
\qquad (v\in \Aff (F) \cap \ZZ^n).\]
Then by the multiplication $\ell_{e_{F}}:
T_{F}\xrightarrow{\sim}T_{F}$ by $e_{F}\in T_{F}$
we have $\ell_{e_{F}}(V_{F})=V_{F}$.
We thus obtain a good action of $\MH$ on 
$V_F$ and an element $[V_{F}\circlearrowleft \MH]\in 
\M_{\CC}^{\hat{\mu}}$. 
The hypersurface $f^{-1}(0)\subset T=\CS_{t}\times
(\CS)^{n}_{x}$ defines a family $Y$ of 
hypersurfaces of $T_{0}=(\CS)^{n}_{x}$ over a small punctured disk
$B(0; \e )^*= \{t\in \CC \ | \ 0< |t| < \varepsilon \}\ (0<\varepsilon \ll1)$.
By the projection $\pi:T=\CS_{t}\times (\CS)^{n}_{x} 
\twoheadrightarrow \CS_{t}$, 
for $t\in \CC$ such that $0<|t|<\varepsilon$
we set $Y_{t}:= \pi^{-1}(t)\cap Y \subset \{t\} 
\times T_{0}\simeq T_{0} \simeq (\CS)^{n}_{x}$.

\begin{definition}\label{schon}
One says that the family $Y$ of hypersurfaces 
$\{Y_{t}\}_{0<|t| < \varepsilon}$ of 
$T_{0}\simeq (\CS)^{n}_{x}$ is sch\"{o}n
if for any cell $F$ in $\mathcal{S}$ the hypersurface 
$V_{F}\subset T_{F}$ of $T_{F}$ is smooth and reduced.
\end{definition}

For the family $Y$ over the punctured disk, denote by $\psi_{t}([Y])\in 
\M_{\CC}^{\hat{\mu}}$ its motivic nearby fiber by the function
$t= {\rm id}_{\CC}:\CC\to\CC$ (see Section \ref{sec:2}). 
Now let us prove Theorem \ref{RCS}.
\medskip

\begin{proof}[{\bf Proof of Theorem~\ref{RCS}}]
\ \\ \ Let $\Sigma$ be a smooth subdivision of the dual fan 
$\Sigma_{0}$ of ${\rm UH}_f$ and 
$\Xi$ its subfan in $\RR^n \simeq \RR^{n}
\times \{0\}$ consisting of cones $\sigma \in \Sigma$
contained in $\RR^{n}\times \{0\}\subset \RR^{n+1}$. 
We denote by $\Lambda$ the fan $\{\{0\} , \RR^{1}_{+} \}$ in $\RR^1$ 
formed by the faces of the closed half line 
$\RR^{1}_{+}$ in $\RR^1$.
Let $X_{\Sigma}$ (resp. $X_{\Xi}$) be the 
toric variety associated to $\Sigma$ (resp. $\Xi$). 
Recall that the algebraic torus $T=( \CC^*)^{n+1}$ acts on 
$X_{\Sigma}$. For a cone $\sigma \in \Sigma$ denote by 
$T_{\sigma} \simeq ( \CC^*)^{n+1- \dim{\sigma}}$ 
the $T$-orbit in $X_{\Sigma}$ associated to it. 
Then by the morphism $\Sigma \to \Lambda$ of fans induced by the 
projection $\RR^{n}\times \RR^{1}_{+} 
\twoheadrightarrow \RR^{1}_{+}$
we obtain a morphism
\[\pi_{\Sigma}:X_{\Sigma} \longrightarrow \CC\]
of toric varieties.
Restricting it to $\CS\subset\CC$, we obtain the projection 
$\CS\times X_{\Xi}\twoheadrightarrow\CS$ 
which extends naturally the previous one
$\pi:T=\CS\times T_{0}\twoheadrightarrow \CS$.
Let $\rho_{1},\dots,\rho_{N}\in \Sigma$ be the rays i.e. 
the $1$-dimensional 
cones in $\Sigma$. We may assume that for some 
$r \leq N$ we have $\rho_{i}\cap(\RR^n\times\{0\})=\{0\}$
$\iff$ $1 \leq i \leq r$.
For $1\leq i \leq r$ 
the $T$-orbit $T_i=T_{\rho_i} \subset X_{\Sigma}$ 
associated to $\rho_{i}$ 
satisfies the condition 
$\pi_{\Sigma}(T_{i})=\{0\}\subset\CC$.
We can easily see that for their 
closures $\overline{T_{i}}\subset X_{\Sigma}$
in $X_{\Sigma}$ we have
\[\pi_{\Sigma}^{-1}(\{0\}) =
 \bigcup_{i=1}^{r}\ov{T_{i}}.\]
For $1\leq i \leq N$ let $\alpha_i 
\in \rho_{i}\cap(\ZZ^{n+1}\setminus\{0\})$ 
be the primitive vector on the ray $\rho_{i}$.
We define a non-negative integer 
$b_{i}\geq0$ by $b_{i}=q( \alpha_i )$,
where $q:\RR^{n+1}=\RR^n\times \RR^{1} 
\twoheadrightarrow \RR^{1}$ is the projection.
Then it is easy to see that for  $1\leq i \leq r$ 
the order of the zeros of the function 
$t\circ \pi_{\Sigma}:X_{\Sigma}\to \CC$ along the
toric divisor $\ov{T_{i}}\subset X_{\Sigma}$ 
is equal to $b_{i}>0$. 
For a cone $\sigma \in \Sigma$ 
such that $\relint \sigma \subset 
\Int ( \RR^{n} \times \RR_+^1)$ 
we denote by $\tl{F_{\sigma}} 
\prec \UH_{f}$ its supporting face in $\UH_{f}$. 
Then by the sch\"{o}nness of $Y$, 
the closure $\ov{Y}$ of $Y\subset 
T \subset X_{\Sigma}$ in $X_{\Sigma}$ intersects the $T$-orbit
$T_{\sigma} \simeq (\CS)^{n+1-\dim{\sigma}}
\subset X_{\Sigma}$ transversally. 
This in particular implies that $\ov{Y}$ is 
smooth on a neighborhood of 
$\pi^{-1}_{\Sigma}(\{0\})$ in $X_{\Sigma}$. 
The resulting smooth hypersurface 
$W_{\sigma}:=\ov{Y}\cap 
T_{\sigma}\subset T_{\sigma}$ of $T_{\sigma}$ is
defined by the $\tl{F_{\sigma}}$-part 
$f_{\tl{F_{\sigma}}}(t,x)$ 
of $f(t,x)$. We can also easily see that for any cone 
$\sigma \in \Xi$ the hypersurface $\ov{Y}$ intersects 
$T_{\sigma}$ transversally on a neighborhood of 
$\pi_{\Sigma}^{-1}(\{0\})$ in $X_{\Sigma}$. 
Moreover the divisor $D= \ov{Y} \setminus Y= 
\ov{Y} \cap (X_{\Sigma} \setminus T)$ in $\ov{Y}$ 
is normal crossing. 
Let $\pi_{\ov{Y}}=
\pi_{\Sigma}|_{\ov{Y}}:\ov{Y}\to \CC$ be 
the restriction of $\pi_{\Sigma}$ to $\ov{Y}$. 
Then by Theorem \ref{GLMT} and $t \circ \pi_{\ov{Y}} 
= \pi_{\ov{Y}}$ we have 
\[
\psi_{t}([Y])= 
( \pi_{\ov{Y}} |_{\pi_{\ov{Y}}^{-1}(0)} )_!
( \mathcal{S}_{ \pi_{\ov{Y}} ,Y} ). 
\]
Moreover the morphism $( \pi_{\ov{Y}} |_{\pi_{\ov{Y}}^{-1}(0)} )_!$ 
sends $\mathcal{S}_{ \pi_{\ov{Y}} ,Y}$ to its 
underlying variety over the point 
$\{ 0 \} \simeq \Spec ( \CC )$, which we still 
denote by $\mathcal{S}_{ \pi_{\ov{Y}} ,Y}$ 
for short. Define an open subset $\Omega$ of 
$\ov{Y}$ containing $Y= \ov{Y} \cap T$ by 
\[
\Omega = 
\ov{Y} \cap \bigl( X_{\Sigma} \setminus 
\bigcup_{\sigma \in \Xi \setminus \{ 0\}} 
 \ \ov{T_{\sigma}} \bigr) 
\subset \ov{Y}. 
\]
Let $\Sigma'\subset \Sigma$ be the subset of $\Sigma$ 
consisting of cones $\sigma$ satisfying the condition 
$\sigma\cap(\RR^{n}\times \{0\})=\{0\}$. 
Then we have 
\[
\Omega = 
\ov{Y} \cap \bigl( \bigsqcup_{\sigma \in \Sigma'} 
 \ T_{\sigma} \bigr)
\]
and $\mathcal{S}_{ \pi_{\ov{Y}} ,Y}$ is described by 
some varieties over the normal crossing divisor 
\[
\Omega \cap \pi_{\ov{Y}}^{-1}(0) 
= \ov{Y} \cap \bigl( 
\bigsqcup_{\sigma \in \Sigma' \setminus \{ 0 \}} 
 \ T_{\sigma} \bigr) = \Omega \setminus Y.
\]
For a cone $\sigma \in \Sigma' \setminus \{ 0 \}$ 
let $\tl{W_{\sigma}}$ be the unramified Galois covering 
of $W_{\sigma}=\ov{Y} \cap T_{\sigma}$. 
From now, we shall prove that 
there exists an isomorphism 
\begin{equation}\label{maineq}
[\tl{W_{\sigma}}]= 
[V_{F_{\sigma}}\circlearrowleft \hat{\mu}]\cdot
 (\mathbb{L}-1)^{n+1-\dim{\sigma}-\dim{F_{\sigma}}},
\end{equation}
where we set $F_{\sigma}=p(\tl{F_{\sigma}})\in \mathcal{S}$. 
Assume first that $\dim\sigma+\dim F_{\sigma}=n+1$.
Let $\tau\in\Sigma$ be an $(n+1)$-dimensional cone such 
that $\sigma\prec \tau$ and set $l=\dim\sigma$.
After reordering the rays $\rho_{i}\ (1\leq i \leq N)$ we may assume that
\[\sigma=\RR_{+}\alpha_{1}+\dots+\RR_{+}\alpha_{l},\qquad \tau
=\RR_{+}\alpha_{1}+\dots+\RR_{+}\alpha_{n+1}.\]
Let us set $\sigma'=\RR_{+}\alpha_{l+1}+\dots+\RR_{+}
\alpha_{n+1}\prec\tau$ and
\[I=\{1,2,\dots,l\}, \qquad I'=\{l+1,l+2,\dots,n+1\}.\]
Moreover set $d_{I}=\gcd\{b_{i}\ |\ 1\leq i\leq l\}\geq 1$ and 
let $d_{I'}\geq 0$ be the generator of the subgroup 
$\ZZ b_{l+1}+\ZZ b_{l+2}+\dots+\ZZ b_{n+1}\subset \ZZ$.
We may define $\gcd\{b_{i}\mid l+1\leq i\leq n+1\}$ to be $d_{I'}$.
Then by the smoothness of the cone $\tau$ similarly we have 
$\gcd(d_{I},d_{I'})=\gcd\{b_{i}\mid 1\leq i\leq n+1\}=1$.
We shall prove the equality (\ref{maineq}) only in the case $d_{I'}\geq 1$.
In the case $d_{I'}=0\ (\iff \sigma'\subset \RR^{n}\times \{0\})$ 
we have $d_{I}=1$
and the proof is much easier.
Assume that $d_{I'}\geq 1$.
Let $\alpha^{*}_{1},\dots,\alpha^{*}_{n+1}\in(\ZZ^{n+1})^{*}\simeq \ZZ^{n+1}$ 
be the dual basis of $\alpha_{1},\dots,\alpha_{n+1}\in\ZZ^{n+1}$ and 
$\tau^{\vee}\subset (\RR^{n+1})^{*}\simeq \RR^{n+1}$ the dual 
cone of $\tau \subset \RR^{n+1}$.
Then the affine open subset $\CC^{n+1}(\tau)(\simeq \CC^{n+1})$ 
of $X_{\Sigma}$
is defined by
\begin{align*}
&\CC^{n+1}(\tau)=\Spec(\CC[\tau^{\vee}\cap\ZZ^{n+1}])
\simeq \Spec(\CC[\ZZ_{+}\alpha^{*}_{1}+\dots+\ZZ_{+}\alpha^{*}_{n+1}])\\
\simeq\ &\Spec(\CC[\xi_{1},\dots,\xi_{n+1}])\simeq \CC^{n+1}_{\xi}
\quad(\alpha^{*}_{i}\longleftrightarrow \xi_{i}).
\end{align*}
By the coordinates $\xi=(\xi_{1},\dots,\xi_{n+1})$ of $\CC^{n+1}(\tau)
\subset X_{\Sigma}$ its subset $T_{\sigma}=\Spec(\CC[\Aff(\sigma)^{\perp}
\cap\ZZ^{n+1}])\simeq (\CC^{*})^{n+1-l}$ is explicitly given by
\[T_{\sigma}=\{\xi\in\CC^{n+1}(\tau)\mid \xi_{i}=0\ (1\leq i\leq l),\  
\xi_{i}\neq 0\ (l+1\leq i \leq n+1)\}.\]
Moreover the restriction of the function 
$t\circ\pi_{\Sigma}\colon X_{\Sigma}
\rightarrow \CC$ to $\CC^{n+1}(\tau)$ is equal to 
$\xi^{b_{1}}_{1}\dots\xi_{n+1}^{b_{n+1}}
\in\CC[\xi_{1},\dots,\xi_{n+1}]$ which corresponds to the element 
$b_{1}\alpha^{*}_{1}+\dots+b_{n+1}\alpha^{*}_{n+1}$ in the group ring 
$\CC[\tau^{\vee}\cap\ZZ^{n+1}]$.
From this we see that the unramified Galois covering $\tl{W_{\sigma}}$ of 
$W_{\sigma}=\overline{Y}\cap T_{\sigma}$ is given by
\[\tl{W_{\sigma}}=\{(t,(\xi_{l+1},\dots,\xi_{n+1}))\in
\CC^{*}\times T_{\sigma}\mid t^{d_{I}}
=\xi^{-b_{l+1}}_{l+1}\dots \xi^{-b_{n+1}}_{n+1},\ 
f_{\tl{F_{\sigma}}}(\xi_{l+1},\dots,\xi_{n+1})=0\}.\]
Recall that the action of the group $\hat{\mu}$ on $[\tl{W_{\sigma}}]\in
\M_{\CC}^{\hat{\mu}}$ is defined by the multiplication of $\zeta_{d_{I}}=
\exp(2\pi\sqrt{-1}/d_{I})\in\CC$ to the coordinate $t$.
In order to rewrite $[\tl{W_{\sigma}}]$ we shall introduce a 
new basis of the lattice $\ZZ^{n+1}$.
First by our assumption on $\sigma$, the affine span $\Aff(\sigma)\simeq 
\RR^{l}$ of $\sigma$ intersects the hyperplane $\RR^{n}\times \{0\}\subset 
\RR^{n+1}$ transversally in $\RR^{n+1}$ and there exists a basis 
$\beta_{1},\dots,
\beta_{l}$ of the lattice $\Aff(\sigma)\cap \ZZ^{n+1}\simeq \ZZ^{l}$ such that 
$\beta_{1},\dots,\beta_{l-1}\subset \ZZ^n\times \{0\}\ (\iff q(\beta_{1})=\dots
=q(\beta_{l-1})=0)$ and $q(\beta_{l})=d_{I}$.
By the condition $\sigma'\not\subset \RR^n\times \{0\}$ there exists also 
a basis $\beta_{l+1},\dots,\beta_{n+1}$ of the lattice $\Aff(\sigma')\cap
\ZZ^{n+1}\simeq \ZZ^{n+1-l}$ such that $\beta_{l+1},\dots,\beta_{n}\in
\ZZ^n\times \{0\}$ and $q(\beta_{n+1})=d_{I'}$.
By $\RR^{n+1}=\Aff(\sigma)\oplus\Aff(\sigma')$ we thus obtain a basis 
$\beta_{1},\dots,\beta_{l},\beta_{l+1},\dots,\beta_{n+1}$ of the lattice 
$\ZZ^{n+1}=\{\Aff(\sigma)\cap\ZZ^{n+1}\}\oplus\{\Aff(\sigma')\cap\ZZ^{n+1}\}$.
For the dual basis $\alpha^{*}_{1},\dots,\alpha^{*}_{n+1}$ 
we have the decomposition
\[w_{1}+w_{2}=b_{1}\alpha^{*}_{1}+\dots+b_{n+1}\alpha^{*}_{n+1}=
\left(\begin{array}{c}
0\\
\vdots\\
0\\
1
\end{array}\right),\] 
where we set $w_{1}=b_{1}\alpha^{*}_{1}+\dots+b_{l}\alpha^{*}_{l}\subset 
\Aff(\sigma')^{\perp}\cap\ZZ^{n+1}\simeq \ZZ^{l}$ and $w_{2}=b_{l+1}
\alpha^{*}_{l+1}+\dots+b_{n+1}\alpha^{*}_{n+1}\in\Aff(\sigma)^{\perp}
\cap\ZZ^{n+1}\simeq \ZZ^{n+1-l}$.
Moreover by the construction of $\beta_{1},\dots,\beta_{n+1}$, 
for the dual basis $\beta^{*}_{1},\dots,\beta^{*}_{n+1}$ of 
it we have also
\begin{align}\label{3pagesiki}
d_{I}\beta^{*}_{l}+d_{I'}\beta^{*}_{n+1}=
\left(\begin{array}{c}
0\\
\vdots\\
0\\
1
\end{array}\right).
\end{align}
We thus obtain $w_{1}=d_{I}\beta^{*}_{l}, w_{2}=d_{I'}\beta^{*}_{n+1}$.
By the condition $\sigma'\not\subset \RR^n\times \{0\}$ we have $w_{2}\neq 0$.
It follows also from our assumption on $\sigma$ that the restriction 
of the projection $p\colon \RR^{n+1}\twoheadrightarrow\RR^n$ to 
$\Aff(\sigma)^{\perp}\simeq\RR^{n+1-l}$ is injective.
Hence we get $p(w_{2})\neq 0$ and the non-vanishing 
$p(w_{1})\neq 0$ follows from $p(w_{1})+p(w_{2})=0$.
Since the two vectors $p(w_{1})$ and $p(w_{2})$ in $\ZZ^n\subset \RR^n$ 
are divisible by $d_{I}$ and $d_{I'}$ respectively 
and $\gcd(d_{I},d_{I'})=1$, 
they are divisible also by $d_{I}d_{I'}$.
Let us show that the lattice vector
\[\frac{1}{d_{I}d_{I'}}p(w_{2})=\frac{1}{d_{I}}
p(\beta^{*}_{n+1})\in\ZZ^n\]
thus obtained is primitive.
Suppose that it is divisible again by an integer $d\geq 2$.
Then we have
\begin{align*}
0=\langle\beta_{l},w_{2}\rangle=\langle p(\beta_{l}),
p(w_{2})\rangle+q(\beta_{l})\cdot q(w_{2})
=\langle p(\beta_{l}),p(w_{2})\rangle+d_{I}d_{I'}q(\beta^{*}_{n+1}).
\end{align*}
This implies that
\[q(\beta^{*}_{n+1})=-\left\langle p(\beta_{l}) ,\ 
\frac{1}{d_{I}d_{I'}}p(w_{2})\right\rangle\]
is divisible by $d$.
So $\beta^{*}_{n+1}$ is also divisible by $d\geq 2$,
which contradicts the fact that $\beta^{*}_{n+1}$ is primitive.
It follows also from
\[{1=\langle\beta_{i},\beta^{*}_{i}\rangle=\langle p(\beta_{i}),
p(\beta^{*}_{i})\rangle\quad (l+1\leq i\leq n)}\]
that the vectors $p(\beta^{*}_{i})\in\ZZ^n\subset \RR^{n}\ 
(l+1\leq i\leq n)$
are primitive.
Note that $\beta^{*}_{l+1},\dots,\beta^{*}_{n+1}$ form a 
basis of the lattice
$\Aff(\sigma)^{\perp}\cap\ZZ^{n+1}\simeq \ZZ^{n+1-l}$.
For their projections by $p\colon \RR^{n+1}\twoheadrightarrow \RR^n$
we have the following result.
\begin{proposition}\label{page4prop}
The vectors $p(\beta^{*}_{l+1}),\dots,p(\beta^{*}_{n}),
\frac{1}{d_{I}}p(\beta^{*}_{n+1})
\in p(\Aff(\sigma)^{\perp})\cap\ZZ^n\simeq p(\Aff(
\tl{F_{\sigma}}))\cap\ZZ^n=
\Aff(F_{\sigma})\cap\ZZ^n$ form a basis of the lattice 
$p(\Aff(\sigma)^{\perp})\cap\ZZ^n\simeq \ZZ^{n+1-l}$.
\end{proposition}
\begin{proof}
We have seen that $p(\beta^{*}_{l+1}),\dots,p(\beta^{*}_{n}),
\frac{1}{d_{I}}p
(\beta^{*}_{n+1})$ are primitive.
First let us show that they are linearly independent over $\RR$.
Suppose that we have
\[\lambda_{l+1}p(\beta^{*}_{l+1})+\dots+\lambda_{n}p(\beta^{*}_{n})+
\lambda_{n+1}\frac{p(\beta^{*}_{n+1})}{d_{I}}=0\]
for some $\lambda_{i}\in\RR\ (l+1\leq i\leq n+1)$.
Then by taking the pairings with $\beta_{i}\ (l+1\leq i \leq n)$ we 
obtain $\lambda_{i}=0\ (l+1\leq i\leq n)$ and hence $\lambda_{n+1}=0$.
Next we show that they generate the lattice 
$p(\Aff(\sigma)^{\perp})\cap\ZZ^n$ over $\ZZ$.
For this we use the following result.

\begin{lemma}\label{page5lemma}
The vectors $p(\beta^{*}_{l+1}),\dots,p(\beta^{*}_{n})$ form a 
basis of the lattice
$\{\RR p(\beta^{*}_{l+1})\oplus\dots\oplus\RR p(\beta^{*}_{n})\}
\cap\ZZ^n.$
\end{lemma}
\begin{proof}
Suppose that they do not generate the lattice over $\ZZ$.
Then there exist integers $d\geq 2$ and $\lambda_{l+1},\dots,
\lambda_{n}\in\ZZ$
such that
\[\frac{1}{d}\left\{\lambda_{l+1}p(\beta^{*}_{l+1})+\dots+
\lambda_{n}p(\beta^{*}_{n})\right\}\in\ZZ^n\]
and $\frac{\lambda_{i_{0}}}{d}\in\QQ\setminus\ZZ$ for some 
$l+1\leq i_{0}\leq n$.
By taking the pairing with $p(\beta_{i_{0}})$ we obtain 
$\frac{\lambda_{i_{0}}}{d}\in\ZZ$,
which is a contradiction.
\end{proof}
Let us continue the proof of Proposition~\ref{page4prop}.
By Lemma~\ref{page5lemma}, if the vectors $p(\beta^{*}_{l+1}),
\dots,p(\beta^{*}_{n}),\frac{1}{d_{I}}p(\beta^{*}_{n+1})$ do not 
generate the lattice $p(\Aff(\sigma)^{\perp})\cap\ZZ^n$,
then there exist integers $d\geq 2$ and $\lambda_{l+1},\dots,
\lambda_{n}\in\ZZ$ such that
\[\frac{1}{d}\left\{\frac{1}{d_{I}}p(\beta^{*}_{n+1})-\lambda_{l+1}p(
\beta^{*}_{l+1})-\dots-\lambda_{n}p(\beta^{*}_{n})\right\}\in\ZZ^n.\]
Set $\gamma^{*}=\beta^{*}_{n+1}-\lambda_{l+1}d_{I}\beta^{*}_{l+1}-
\dots-\lambda_{n}d_{I}\beta^{*}_{n}$.
Then we obtain a new basis $\beta^{*}_{l+1},\dots,\beta^{*}_{n},\gamma^{*}$ 
of the lattice $\Aff(\sigma)^{\perp}\cap\ZZ^{n+1}\simeq \ZZ^{n+1-l}$.
By taking the pairing with $\beta_{l}\in\Aff(\sigma)\cap\ZZ^{n+1}$ we get
\[0=\langle\beta_{l},\gamma^{*}\rangle=\langle p(\beta_{l}),
p(\gamma^{*})\rangle+d_{I}\cdot q(\gamma^{*}).\]
Since the lattice vector $p(\gamma^{*})\in\ZZ^n$ is divisible by 
$dd_{I}$, the integer $q(\gamma^{*})\in\ZZ$ and hence 
$\gamma^{*}\in\ZZ^{n+1}$ itself is so.
This contradicts the fact that $\gamma^{*}$ is primitive.
\end{proof}

Now we return to the proof of Theorem~\ref{RCS}.
By the new basis $\beta^{*}_{l+1},\dots,\beta^{*}_{n+1}$ of the lattice
 $\Aff(\sigma)^{\perp}\cap\ZZ^{n+1}\simeq \ZZ^{n+1-l}$ 
we have an isomorphism
\begin{align*}
&T_{\sigma}=\Spec(\CC[\Aff(\sigma)^{\perp}\cap\ZZ^{n+1}])
\simeq \Spec(\CC[\ZZ\beta^{*}_{l+1}+\dots+\ZZ\beta^{*}_{n+1}])\\
\simeq \ &\Spec(\CC[z_{l+1},\dots,z_{n+1}])\quad (\beta^{*}_{i}
\longleftrightarrow z_{i}).
\end{align*}
By the new coordinates $z=(z_{l+1},\dots,z_{n+1})$ of 
$T_{\sigma}\simeq (\CC^{*})^{n+1-l}$ we have
\[\tl{W_{\sigma}}=\{(t,(z_{l+1},\dots,z_{n+1}))\in\CC^{*}\times 
T_{\sigma}\mid t^{d_{I}}=z^{-d_{I'}}_{n+1},\ 
f_{\tl{F_{\sigma}}}(z)=0\}.\]
On the other hand, by taking the projection $q\colon \RR^{n+1}
\twoheadrightarrow\RR^{1}$ of the both sides of the 
decomposition (\ref{3pagesiki}),
we obtain
\begin{align}\label{6pagesiki}
d_{I}\cdot q(\beta^{*}_{l})+d_{I'}\cdot q(\beta^{*}_{n+1})=1.
\end{align}
Note that for the basis $p(\beta^{*}_{l+1}),
\dots,p(\beta^{*}_{n}),\frac{1}{d_{I}}p
(\beta^{*}_{n+1})$ of the lattice 
$\Aff(F_{\sigma})\cap\ZZ^{n}$ constructed in 
Proposition~\ref{page4prop}
we have $\nu_{f}(p(\beta^{*}_{l+1})),\dots,\nu_{f}(p(\beta^{*}_{n}))\in\ZZ$ 
and $\nu_{f}(\frac{1}{d_{I}}p(\beta^{*}_{n+1}))\equiv
\frac{q(\beta^{*}_{n+1})}{d_{I}} \bmod \ZZ$.
By Proposition~\ref{page4prop}, we obtain an isomorphism
\[V_{F_{\sigma}}\simeq \{(s,(z_{l+1},\dots,z_{n+1}))\in\CC^{*}\times 
T_{\sigma}\mid s^{d_{I}}=z_{n+1},\ f_{\tl{F_{\sigma}}}(z)=0\}.\]
Moreover the action of the group $\hat{\mu}$ on ${[V_{F_{\sigma}}
\circlearrowleft \hat{\mu}]}\in\M_\CC^{\hat{\mu}}$ corresponds to 
the multiplication of $\exp(-2\pi\sqrt{-1}q(
\beta^{*}_{n+1})/d_{I})\in\CC$ to the coordinate $s$.
By the equality (\ref{6pagesiki}) we have
\begin{align}\label{7pagesiki}
\exp\left(-\frac{2\pi\sqrt{-1}
q(\beta^{*}_{n+1})}{d_{I}}\right)^{-d_{I'}}=
\exp\left(\frac{2\pi\sqrt{-1}}{d_{I}}\right)=\zeta_{d_{I}}.
\end{align}
Furthermore by $\gcd(d_{I},d_{I'})=1$ the morphism $V_{F_{\sigma}}
\rightarrow \tl{W_{\sigma}}$ defined by $(s,z)\longmapsto 
(s^{-d_{I'}},z)$ is an isomorphism.
It is compatible with the actions of $\hat{\mu}$ on the both 
sides by the equality (\ref{7pagesiki}).
We thus obtained the required isomorphism ${[\tl{W_{\sigma}}]}
\simeq{[V_{F_{\sigma}}\circlearrowleft \hat{\mu}]}$ in $\M_\CC^{\hat{\mu}}$.
If $\dim{\sigma}+\dim{F_{\sigma}}<n+1$, similarly we 
can prove an isomorphism
\[\tl{W_{\sigma}}\simeq V_{F_{\sigma}}\times (\CC^{*})^{n+1-
\dim{\sigma}-\dim{F_{\sigma}}},\]
but the action of $\hat{\mu}$ on the second factor 
$(\CC^{*})^{n+1-\dim\sigma-\dim{F_{\sigma}}}$ of the 
right hand side might be non-trivial.
Nevertheless by \cite[Example~2.2]{Stapledon} (witch 
follows essentially from the definition of 
$\M_\CC^{\hat{\mu}}$) we obtain an isomorphism
\[[\tl{W_{\sigma}}]\simeq [V_{F_{\sigma}}\circlearrowleft 
\hat{\mu}]\cdot (\mathbb{L}-1)^{n+1-\dim\sigma-\dim{F_{\sigma}}}\]
in $\M_\CC^{\hat{\mu}}$.
Then by Theorem \ref{GLMT} we have 
\begin{align*}
\psi_{t}([Y])& = \sum_{\sigma 
\in \Sigma' \setminus \{ 0\} }[\tl{W_{\sigma}}]
\cdot (1-\mathbb{L})^{\dim{\sigma}-1}
\\
& =\sum_{\sigma \in \Sigma' \setminus \{ 0\} }
(-1)^{n+1-\dim{\sigma}-
\dim{F_{\sigma}}}[V_{F_{\sigma}}
\circlearrowleft\hat{\mu}]\cdot
(1-\mathbb{L})^{n-\dim{F_{\sigma}}}.
\end{align*}
For a cell $F\in\mathcal{S}$ denote by $\tl{F}\prec \mathrm{UH}_{f}$ 
the unique compact 
face of $\mathrm{UH}_{f}$ such that $p(\tl{F})=F$ and let 
$F^{\circ} \in\Sigma_{0}$
be the cone which corresponds to it in the dual fan $\Sigma_{0}$.
Then we can easily show
\begin{align*}
\sum_{\substack{\sigma \in \Sigma' \setminus \{ 0\} 
\\\mathrm{rel.int}\sigma\subset 
\mathrm{rel.int} F^{\circ} }}(-1)^{n+1-\dim
\sigma-\dim{F_{\sigma}}}{[
V_{F_{\sigma}}\circlearrowleft \hat{\mu}]}
\cdot(1-\mathbb{L})^{n-\dim{F_{\sigma}}}\\
=\left\{
\begin{array}{l}
[V_{F}\circlearrowleft \hat{\mu}]\cdot (1-\mathbb{L})^{n-
\dim{F}}\qquad(\mathrm{rel.int}F\subset \Int P)\\\\
0 \qquad(\mathrm{otherwise})
\end{array}
\right.
\end{align*}
(cf. the proof of Matsui-Takeuchi 
\cite[Theorem 5.7 and Proposition 5.5]{M-T-4}) .
We thus obtain the desired formula 
\[ \psi_{t}([Y])=\sum_{\relint{F}\subset\Int{P}}[V_{F}
\circlearrowleft\hat{\mu}]\cdot(1-\mathbb{L})^{n-\dim{F}}. 
\]
This completes the proof. 
\end{proof}

\begin{remark}
As is clear from the above proof of 
Theorem \ref{RCS}, it can be immediately 
generalized to any sch\"{o}n family of 
subvarieties of $T_0 \simeq (\CS)^n$ 
as in \cite[Theorem 3.2 and Corollary 5.3]{Stapledon}. 
However in this paper, we do not use such a 
generalization. 
\end{remark}

By the proof of the above theorem we obtain the following result.
\begin{lemma}\label{smooth}
Assume that the family $Y$ is sch\"on.
Then there exists $\varepsilon>0$ such that the hypersurface 
$Y_{t}=Y\cap\pi^{-1}(t)\subset T_{0}=(\CS)^{n}_{x}$
 is Newton non-degenerate (see \cite{Oka}) 
for any $t\in\CS$ satisfying the condition $0<|t|<\varepsilon$.
\end{lemma}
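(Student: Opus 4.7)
The plan is to deduce the Newton non-degeneracy of $Y_t$ in the sense of Oka directly from the sch\"onness of the family $Y$, by reusing the toric-geometric setup introduced in the proof of Theorem \ref{th:1}. Recall that $Y_t \subset T_0 \simeq (\CS)^n$ is Newton non-degenerate with respect to $P = p(\UH_f)$ if, for every compact face $F \prec P$, the face polynomial
\[
f(t,x)_F = \sum_{v \in F \cap \ZZ^n} a_v(t)\, x^v
\]
defines a smooth reduced hypersurface in the torus $T_F$, or equivalently, the closure $\ov{Y_t} \subset X_{\Xi_0}$ of $Y_t$ in the toric compactification associated to the normal fan $\Xi_0$ of $P$ meets every $T$-orbit transversally. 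Since for each vertex $v$ of $P$ the rational function $a_v(t)$ is not identically zero (otherwise $v$ would not be a vertex), for $|t|$ small enough and nonzero all such $a_v(t)$ are nonzero and $\mathrm{Newton}(f(t,\cdot)) = P$.

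Next I would invoke the toric model built in the proof of Theorem \ref{th:1}: a smooth subdivision $\Sigma$ of the dual fan $\Sigma_0$ of $\UH_f$, its horizontal part $\Xi \subset \Sigma$ in $\RR^n \times \{0\}$, and the morphism $\pi_\Sigma : X_\Sigma \to \CC$. There it was shown that the sch\"onness forces the closure $\ov{Y}$ of $Y$ in $X_\Sigma$ to be smooth and to intersect every $T$-orbit $T_\sigma$ transversally on some open neighborhood $U$ of $\pi_\Sigma^{-1}(0)$. Choose $\varepsilon>0$ so small that $\pi_\Sigma^{-1}(B(0;\varepsilon)) \subset U$. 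Since the restriction of $X_\Sigma$ to $\CS$ is canonically $\CS \times X_\Xi$ and $\pi_\Sigma$ restricts to the first projection, for every $t$ with $0 < |t| < \varepsilon$ the fiber $\ov{Y_t} \subset \{t\} \times X_\Xi$ is smooth and transverse to every toric stratum of $X_\Xi$.

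Finally, I would translate this transversality statement in $X_\Xi$ into the non-degeneracy of the face polynomials. Since $\Xi$ subdivides $\Xi_0$, each cone $\tau \in \Xi_0$ dual to a face $F \prec P$ is a union of cones $\sigma \in \Xi$; those $\sigma$ with $\relint\sigma \subset \relint\tau$ correspond, by the duality used in the proof of Theorem \ref{th:1}, to the cells of $\mathcal{S}|_F$. Via the rescaling-of-coordinates computation carried out in the proof of Theorem \ref{th:1} (choosing, for each such $\sigma$, an interior vector $c$ and performing $x = t^c y$), the $F$-part $f(t, y)_F$ reduces, up to a unit times a power of $t$, to the initial Laurent polynomial $I_f^{F'}(y)$ on the torus $T_{F'}$ for the corresponding cell $F' \in \mathcal{S}|_F$ whose zero set is smooth by the sch\"on assumption. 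Patching these local tropical pieces together shows that $V(f(t,\cdot)_F) \subset T_F$ has no singular points in the torus, i.e., $Y_t$ is Newton non-degenerate.

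The main obstacle is the last step: a priori, transversality in the refined toric variety $X_\Xi$ does not automatically imply smoothness of $V(f(t,\cdot)_F)$ in $T_F$, because $X_\Xi \to X_{\Xi_0}$ is only proper birational and an orbit of $X_{\Xi_0}$ generically decomposes into several orbits of $X_\Xi$. The verification consists in checking by hand, for any putative singular point of $V(f(t,\cdot)_F)$ in $T_F$ near $t=0$, that after passing to an appropriate torus rescaling it produces a singular point of some $I_f^{F'}$ in $T_{F'}$, contradicting sch\"onness; this is essentially the reverse of the computation at the end of the proof of Theorem \ref{th:1}.
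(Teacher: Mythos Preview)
Your approach is essentially the same as the paper's: the paper gives no separate proof at all, simply stating that the lemma follows from the proof of Theorem~\ref{th:1}, and you are correctly extracting the relevant transversality statements from that proof and translating them into Newton non-degeneracy of $Y_t$.

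One remark: the ``obstacle'' you raise at the end is not a genuine difficulty and can be dispatched without the tropical rescaling you propose. For a face $F\prec P$ with dual cone $\tau_F\in\Xi_0$, choose any cone $\sigma\in\Xi$ with $\dim\sigma=\dim\tau_F$ and $\relint\sigma\subset\relint\tau_F$ (such cones exist because $\Xi$ refines $\Xi_0$). Then $\Aff(\sigma)=\Aff(\tau_F)$, so the birational morphism $X_\Xi\to X_{\Xi_0}$ restricts to an isomorphism $T_{0,\sigma}\xrightarrow{\sim}T_F$, and the supporting face of $\sigma$ in $P$ is exactly $F$. Hence $\ov{Y_t}\cap T_{0,\sigma}$ is defined precisely by $f(t,\cdot)_F$, and its smoothness (which you have already obtained from the transversality in $X_\Xi$) is literally the smoothness of $V(f(t,\cdot)_F)\subset T_F$. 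No limiting or rescaling argument is needed.
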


Recall that for 
a constructible sheaf $\mathcal{F}\in \BDC_{c}(\CC)$ on $\CC$ 
its nearby cycle sheaf 
$\psi_{t}(\mathcal{F})\in \BDC_{c}(\{0\})$ by 
the function $t$ has 
a direct sum decomposition
\[\psi_{t}(\mathcal{F}) = \bigoplus_{\lambda\in\CC}
\psi_{t,\lambda}(\mathcal{F})\]
with respect to the generalized eigenspaces 
$\psi_{t,\lambda}(\mathcal{F})\in \BDC_{c}(\{0\})$ 
for eigenvalues $\lambda\in\CC$ (see Dimca~\cite{Dimca}). 
Let $j: B(0; \e )^* \hookrightarrow B(0; \e )= 
B(0; \e )^* \sqcup \{ 0 \}$ be the inclusion map. 
By the rotation on the punctured disk we obtain 
the monodromy automorphisms 
\[\Phi_{j}:H^{j}_{c}(Y_{t};\CC)\xrightarrow{\sim}
H^{j}_{c}(Y_{t};\CC) \qquad (j\in\ZZ)\]
for $0<|t|<\varepsilon$.
For $\lambda\in \CC$ let 
\[H^{j}_{c}(Y_{t};\CC)_{\lambda} \subset H^{j}_{c}(Y_{t};\CC)\] 
be the generalized eigenspace of $\Phi_{j}$ for the eigenvalue $\lambda$. 
Then we have an isomorphism
\[H^{j}\psi_{t,\lambda}(j_{!}R\pi_{!}\CC_{Y})\simeq 
H^{j}_{c}(Y_{t};\CC)_{\lambda}\]
for any $j \in \ZZ$ and $\lambda\in\CC$.

\begin{proposition}\label{Gysin}
Assume that the family $Y$ is sch\"on.
Then for $t\in\CS$ such that $0<|t|\ll1$ we have
\[H^{j}_{c}(Y_{t};\CC)\simeq0  \qquad (j<n-1)\]
and the Gysin map
\[H^{j}_{c}(Y_{t};\CC) \longrightarrow H^{j+2}_{c}(T_{0};\CC)\]
associated to the inclusion map $Y_{t}\hookrightarrow T_{0}$
is an isomorphism (resp. surjective) for $j>n-1$ (resp. $j=n-1$). 
Moreover the monodromy $\Phi_{j}: H^{j}_{c}(Y_{t};\CC)
\xrightarrow{\sim} H^{j}_{c}(
Y_{t};\CC)$ is identity for any $j>n-1$. In particular, 
for any $\lambda \not= 1$ and $t\in\CS$ such that $0<|t|\ll1$ 
we have the concentration 
\[H^{j}_{c}(Y_{t};\CC)_{\lambda} \simeq 0  \qquad (j \not= n-1).\]
\end{proposition}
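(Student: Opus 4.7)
The plan is to establish the proposition in three stages: the vanishing statement, the Gysin isomorphism/surjectivity, and the triviality of the monodromy (from which the concentration assertion follows formally). First I would invoke the preceding lemma to conclude that for $0<|t|\ll 1$ the hypersurface $Y_t\subset T_0\simeq(\CS)^n$ is smooth and Newton non-degenerate, hence a smooth affine variety of complex dimension $n-1$. Artin's vanishing theorem then gives $H^k(Y_t;\CC)=0$ for $k>n-1$, and Poincar\'e--Verdier duality on the smooth variety $Y_t$ translates this into $H^j_c(Y_t;\CC)=0$ for $j<n-1$, settling the first assertion.

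For the Gysin statement I would dualize first and then appeal to the affine Lefschetz hyperplane theorem. By Poincar\'e--Verdier duality on the smooth varieties $Y_t$ and $T_0$, the Gysin map $H^j_c(Y_t;\CC)\to H^{j+2}_c(T_0;\CC)$ being an isomorphism for $j>n-1$ (resp. surjective for $j=n-1$) is equivalent to the restriction $i^{*}\colon H^k(T_0;\CC)\to H^k(Y_t;\CC)$ being an isomorphism for $k<n-1$ (resp. injective for $k=n-1$), with $k=2n-j-2$ and $i\colon Y_t\hookrightarrow T_0$ the closed inclusion. For Newton non-degenerate hypersurfaces in the algebraic torus this is precisely the statement of Danilov--Khovanskii~\cite{D-K}; alternatively, one may derive it directly from Artin's vanishing applied to the smooth affine complement $U_t:=T_0\setminus Y_t$ of complex dimension $n$ (affine as the complement of a hypersurface in an affine variety) together with Verdier duality applied to the long exact sequence attached to the triangle $j_{!}\CC_{U_t}\to\CC_{T_0}\to i_{*}\CC_{Y_t}$. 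This affine Lefschetz step is the main technical obstacle.

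For the triviality of $\Phi_j$ when $j>n-1$, I would lift the Gysin map to the family over $B(0;\e)^{*}$. The closed embedding $Y\hookrightarrow B(0;\e)^{*}\times T_0$ produces a relative Gysin morphism of sheaves on $B(0;\e)^{*}$,
\[ R^j\pi_{!}\CC_Y\longrightarrow R^{j+2}\mathrm{pr}_{!}\CC_{B(0;\e)^{*}\times T_0}\simeq \CC_{B(0;\e)^{*}}\otimes_{\CC}H^{j+2}_c(T_0;\CC), \]
where $\mathrm{pr}\colon B(0;\e)^{*}\times T_0\to B(0;\e)^{*}$ is the first projection; the target is a constant (trivial) local system. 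For $j>n-1$ the previous paragraph makes this morphism a stalkwise isomorphism, so the source is also a constant local system and therefore $\Phi_j=\mathrm{id}$. The concentration assertion then follows formally: for $\lambda\neq 1$ and $j<n-1$ the whole cohomology group vanishes by the first paragraph, while for $j>n-1$ the triviality of $\Phi_j$ forces all its generalized eigenspaces for $\lambda\neq 1$ to be zero, leaving only $j=n-1$ as a possibly nontrivial contribution.
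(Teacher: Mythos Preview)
Your proposal is correct and follows essentially the same route as the paper: invoke the preceding lemma for Newton non-degeneracy, use Poincar\'e duality/Artin vanishing for the vanishing in low degrees, cite Danilov--Khovanskii's weak Lefschetz for the Gysin statement, and deduce the triviality of $\Phi_j$ from the fact that the Gysin map exists at the level of the family and has constant target. The only cosmetic difference is that the paper phrases the monodromy step via the nearby cycle morphism $\psi_t(j_!R\pi_!\CC_T)\to\psi_t(j_!R\pi_!\CC_Y)$ rather than your relative Gysin map of local systems on $B(0;\e)^*$, but these encode the same compatibility with monodromy.
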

\begin{proof}
Since $Y_{t}\subset T_{0}$ is smooth and affine, 
the first assertion follows from 
the (generalized) Poincar\'e duality theorem
\[H^{j}_{c}(Y_{t};\CC)\simeq \lbrack{H^{2n-2-j}
(Y_{t};\CC)}\rbrack^{*} \qquad (j\in\ZZ).\]
Moreover by the above lemma, the second assertion 
follows from the weak Lefschetz 
theorem (see Danilov-Khovanskii \cite[Proposition 3.9]{D-K}) 
for the Newton non-degenerate hypersurface $Y_{t}
\subset T_{0}=(\CS)^{n}_{x}$. 
By the definition of the Gysin maps, we thus obtain 
isomorphisms
\[ H^{j}(T_t ;\CC) \simeq 
H^{j}(T_{0};\CC) \simto  H^{j}(Y_{t};\CC) \]
for $j<n-1$. Since the monodromy 
automorphisms on $H^{j}(T_t ;\CC) \simeq
H^{j}(T_{0};\CC)$ are trivial, for $j<n-1$ 
that on $H^{j}(Y_{t};\CC)\simeq \lbrack{H^{2n-2-j}_{c} 
(Y_{t};\CC)}\rbrack^{*}$ is identity. 
\end{proof}

For a cell $F\in\mathcal{S}$ by using 
the affine linear extension $\nu_{F}\colon 
\mathrm{Aff}(F)\simeq \RR^{\dim{F}}\longrightarrow \RR$ 
of $\nu|_{F}\colon 
F\longrightarrow \RR$ we define
a positive integer $m_{F}$ to be the minimal 
one $m$ for which $m\cdot\nu_{F}$ takes only 
integer values on $\mathrm{Aff}(F)\cap\ZZ^{n}$.
It is clear that if $F\prec F^{\prime}$ the number $m_{F^{\prime}}$ is divisible by $m_{F}$.
Then we define a finite subset $R_{f}\subset \CC$ by
\[R_{f}=\bigcup_{F\subset\partial P}\{\lambda\in\CC\ |\ 
\lambda^{m_{F}}=1\}\subset \CC.\] 
Note that we have $1 \in R_f$. 

\begin{theorem}\label{th:2}
Assume that the family $Y$ is sch\"on.
Then for any $\lambda\notin R_{f}$ the morphism 
\[\psi_{t,\lambda}(j_{!}R\pi_{!}\CC_{Y})
\longrightarrow \psi_{t,\lambda}(j_{!}R\pi_{*}\CC_{Y})\]
induced by the one $R\pi_{!}\CC_{Y}\to R\pi_{*}\CC_{Y}$ 
is an isomorphism.
\end{theorem}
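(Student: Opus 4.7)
The plan is to compactify $Y$ via the smooth toric variety constructed in the proof of Theorem~\ref{th:1}, rewrite the cone of $R\pi_!\CC_Y \to R\pi_*\CC_Y$ as the proper pushforward of a sheaf supported on the toric boundary, and then show by a local analysis along a normal crossing divisor that the only possible monodromy eigenvalues on this boundary contribution lie in $R_f$.

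First I would reuse the geometric setup from the proof of Theorem~\ref{th:1}: take a smooth subdivision $\Sigma$ of the dual fan $\Sigma_0$ of $\UH_f$, the toric variety $X_\Sigma$ with the projection $\pi_\Sigma \colon X_\Sigma \to \CC$, and set $\bar{Y}$ to be the closure of $Y$ in $\pi_\Sigma^{-1}(B(0;\e)) \cap X_\Sigma$ with proper projection $\bar{\pi} = \pi_\Sigma|_{\bar{Y}} \colon \bar{Y} \to B(0;\e)$. Sch\"onness implies that $\bar{Y}$ is smooth in a neighborhood of $\bar\pi^{-1}(0)$ and that $D := \bar{Y}\setminus Y$ is a normal crossing divisor there. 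Let $\iota \colon Y \hookrightarrow \bar{Y}$ be the open inclusion, and $Q$ the cone of $\iota_!\CC_Y \to R\iota_*\CC_Y$, which is supported on $D$. Applying $R\bar\pi_*$ and restricting to $B(0;\e)^*$ identifies the cone of $R\pi_!\CC_Y \to R\pi_*\CC_Y$ with $R\bar\pi_* Q|_{B(0;\e)^*}$. Since nearby cycles are insensitive to replacing $j_!$ by $Rj_*$, the theorem is equivalent to the vanishing $\psi_{t,\lambda}(R\bar\pi_* Q) = 0$ for all $\lambda \notin R_f$.

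Next, by properness of $\bar\pi$ the proper base change formula for nearby cycles gives $\psi_t(R\bar\pi_* Q) \simeq R\bar\pi_{0,*}\psi_{t\circ\bar\pi}(Q)$, where $\bar\pi_0$ is the restriction of $\bar\pi$ to $\bar\pi^{-1}(0)$. It thus suffices to prove $\psi_{t\circ\bar\pi,\lambda}(Q)_p = 0$ at every $p \in \bar\pi^{-1}(0)$ for $\lambda \notin R_f$. Decompose $D = D_v \sqcup D_h$, where $D_v := \bar\pi^{-1}(0)$ consists of points in toric orbits $T_\sigma$ with $\sigma \not\subset \RR^n \times \{0\}$, and $D_h$ consists of points in orbits $T_\sigma$ with $\{0\} \ne \sigma \subset \RR^n \times \{0\}$. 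On the punctured disk $\{t \ne 0\}$ the sheaf $Q$ is supported on $D_h$, so the stalk $\psi_{t,\lambda}(Q)_p$ is automatically zero unless $p$ lies in the closure $\overline{D_h}$. For such $p$, lying on a stratum $T_\tau$, the cone $\tau$ must have at least one ray in $\Xi$; via the dual-fan correspondence this forces the corresponding face $\tilde{F}_\tau \prec \UH_f$ to project into $\partial P$, so that $F_\tau := p(\tilde{F}_\tau) \subset \partial P$.

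Finally, work in local coordinates $(\xi_1,\dots,\xi_{n+1})$ on $X_\Sigma$ near $p$ so that $t = \xi_1^{b_1}\cdots\xi_{l_v}^{b_{l_v}}\cdot u$ with $u$ a unit and with the vertical components of the normal crossing divisor through $p$ being $\{\xi_i = 0\}$ for $i$ in some index set $I_v(p)$. By the construction of the Galois covers $\tl{E_I^\circ}$ recalled in Section~\ref{sec:2}, the monodromy eigenvalues occurring on $\psi_{t,\lambda}(Q)_p$ are $d_I$-th roots of unity for subsets $I\subset I_v(p)$, where $d_I = \gcd(b_i)_{i\in I}$. The key identification---already carried out in the proof of Theorem~\ref{th:1} via Proposition~\ref{page4prop} and the ensuing lattice computation---is that the $\hat\mu$-action on the local cover $\tl{W_\tau}$ factors through the $\mu_{m_{F_\tau}}$-action on $V_{F_\tau}$, so each $d_I$ appearing at $p$ divides $m_{F_\tau}$. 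Therefore every monodromy eigenvalue on $\psi_{t,\lambda}(Q)_p$ is an $m_{F_\tau}$-th root of unity for some $F_\tau \subset \partial P$, hence belongs to $R_f$. For $\lambda \notin R_f$ the $\lambda$-eigenspace vanishes, concluding the argument. The hard part will be precisely this last identification of eigenvalues, i.e.~matching the toric gcd data $(b_i, d_I)_{i,I}$ along $\overline{D_h}\cap D_v$ with the lattice denominators $m_F$ of the boundary cells $F \subset \partial P$, a bookkeeping step that relies on the lattice computation already at the heart of the proof of Theorem~\ref{th:1}.
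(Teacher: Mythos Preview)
Your setup through the reduction to points of $\overline{D_h}\cap\bar\pi^{-1}(0)$ matches the paper's. The gap is in the final step: the stalk $\psi_{t\circ\bar\pi,\lambda}(Q)_p$ does \emph{not} vanish in general for $\lambda\notin R_f$, because your divisibility claim is backwards. At $p$ on a stratum $\ov{Y}\cap T_\tau$ with $\tau$ having both horizontal and vertical rays, the stalk eigenvalues are $d_I$-th roots of unity for $I\subset I_v(p)$. If $\tau'\subset F_\tau^\circ$ is a maximal smooth cone with $\tau\prec\tau'$, then $m_{F_\tau}=d_{\tau'}$; since the vertical rays of $\tau$ are a subset of those of $\tau'$, one has $m_{F_\tau}=d_{\tau'}\mid d_\tau$, not $d_\tau\mid m_{F_\tau}$. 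Hence a $d_\tau$-th root of unity need not lie in $R_f$: e.g.\ a single vertical ray with $b_1=6$ inside a cone $F^\circ$ with $m_F=2$ produces primitive sixth roots of unity as stalk eigenvalues. Note also that the covers $\tl{W_\sigma}$ and the lattice computation around Proposition~\ref{page4prop} in the proof of Theorem~\ref{th:1} are carried out only for $\sigma\in\Sigma'$, i.e.\ for cones with \emph{no} horizontal rays, so that identification is simply not available at the points $p\in\overline{D_h}$ you need.

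The paper does not argue stalkwise. After the same reduction it cuts the boundary complex along the horizontal strata $\ov{Y}\cap\ov{T_\sigma}$ ($\sigma\in\Xi\setminus\{0\}$) and invokes M.~Saito's primitive decomposition of nearby-cycle perverse sheaves from mixed Hodge module theory: the weight-graded pieces of $\psi_{\pi_\sigma,\lambda}(\CC_{\ov{Y}\cap\ov{T_\sigma}})[n-\dim\sigma-1]$ are minimal extensions of rank-one local systems $\SL_\tau$ on $\ov{Y}\cap T_\tau$ for cones $\tau\succ\sigma$ with $\lambda^{d_\tau}=1$. These local systems have nonzero stalks; the vanishing is \emph{global}. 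For $\lambda\notin R_f$ one finds a ray $\rho\prec\tau'$ with $\tau\cap\rho=\{0\}$ and $\lambda^{-q(\beta)}\neq 1$, so $\SL_\tau$ has nontrivial monodromy around the corresponding toric divisor; a product decomposition $\ov{Y}\cap T_\tau\simeq Z\times(\CC^*)^k$ and the K\"unneth formula then give $R\Gamma_c(\ov{Y}\cap T_\tau;\SL_\tau)\simeq 0$. This global cohomological mechanism is the missing idea in your proposal.
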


\begin{proof}
The proof is similar to that of \cite[Theorem 4.5]{T-T}. 
We shall use the notations $\Sigma$, $\Xi$, $X_{\Sigma}$, 
$\pi_{\Sigma}: X_{\Sigma} \rightarrow \CC$, 
$\ov{Y}$, $\pi_{\ov{Y}} :\ov{Y}\to \CC$ etc. 
in the proof of Theorem \ref{RCS}. 
By the sch\"onness of $Y$ the 
hypersurface $\ov{Y}\subset X_{\Sigma}$ 
intersects $T$-orbits in 
$\pi_{\Sigma}^{-1}(\{0\})$ transversally.
Recall that for any cone $\sigma\in \Xi$ the hypersurface 
$\ov{Y}$ intersects the $T$-orbit 
$T_{\sigma}$ associated to it transversally on a 
neighborhood of $\pi_{\Sigma}^{-1}(\{0\})$ in $X_{\Sigma}$. 
Recall also that the divisor $D= \ov{Y} \setminus Y= 
\ov{Y} \cap (X_{\Sigma} \setminus T)$ in $\ov{Y}$ 
is normal crossing there. 
Let $i_{D}:D \hookrightarrow \ov{Y}$ 
and $j_{Y}:Y \hookrightarrow \ov{Y}$ be the inclusion 
maps. Then there exists a commutative diagram
\[
\xymatrix{
Y \ar@{^{(}-{>}}[r]^{j_{Y}} \ar[d]_{\pi_Y= \pi|_Y}& 
\ov{Y} \ar[d]^{\pi_{\ov{Y}}} \\
\CS \ar@{^{(}-{>}}[r]^{j} & \CC.
}
\]
Since $\pi_{\ov{Y}} :\ov{Y}\to \CC$ is proper,
we have isomorphisms 
\[
\left\{
\begin{array}{l}
\psi_{t}(j_{!}R\pi_{!}\CC_{Y}) \simeq\psi_{t}
(R(\pi_{\ov{Y}})_{*}(j_{Y})_{!}\CC_{Y}),\\
\psi_{t}(j_{!}R\pi_{*}\CC_{Y})\simeq \psi_{t}
(Rj_{*}R ( \pi_Y)_{*}\CC_{Y}) \simeq\psi_{t}(
R(\pi_{\ov{Y}})_{*}R(j_{Y})_{*}\CC_{Y}).
\end{array}
\right.
\]
Therefore, by applying the functor $\psi_{t}\circ 
R(\pi_{\ov{Y}})_{*}:\BDC( \ov{Y} )
\to \BDC(\{0\})$ to the distinguished triangle
\[(j_{Y})_{!}\CC_{Y}\to R(j_{Y})_{*}\CC_{Y}\to 
(i_{D})_{*}(i_{D})^{-1}R(j_{Y})_{*}\CC_{Y} \xrightarrow{+1}\]
we obtain the new one
\[\psi_{t}(j_! R\pi_{!}\CC_{Y})\to \psi_{t}(j_! R\pi_{*}\CC_{Y})
\to \psi_{t}(R(\pi_{\ov{Y}})_{*} (i_{D})_{*} (i_{D})^{-1}
R(j_Y)_{*}\CC_{Y})\xrightarrow{+1}. \]
This implies that for the proof of the theorem it 
suffices to prove the vanishing
\[\psi_{t,\lambda}(
R(\pi_{\ov{Y}})_{*} (i_{D})_{*} (i_{D})^{-1}
R(j_Y)_{*}\CC_{Y}
 )\simeq 0\]
for any $\lambda \notin R_{f}$.
Since $\pi_{\ov{Y}} :\ov{Y}\to \CC$ is proper, 
by \cite[Proposition 4.2.11]{Dimca} and 
\cite[Exercise VIII.15]{K-S} 
for any $\lambda \in \CC$ we have an isomorphism 
\[\psi_{t,\lambda}(
R(\pi_{\ov{Y}})_{*} (i_{D})_{*} (i_{D})^{-1}
R(j_Y)_{*}\CC_{Y}
 )\simeq 
R \Gamma ( \pi_{\ov{Y}}^{-1}( \{ 0 \} ); 
\psi_{\pi_{\ov{Y}},\lambda}( (i_{D})_{*} (i_{D})^{-1}
R(j_Y)_{*}\CC_{Y})). 
\]
Now let us set 
\[ D^{\prime}=
\ov{D \setminus \pi_{\ov{Y}}^{-1}( \{ 0 \} )} 
= \bigcup_{\sigma \in \Xi} ( 
\ov{Y} \cap \ov{T_{\sigma}} ) 
\subset D
\]
and $Y^{\prime}= \ov{Y} \setminus D^{\prime} 
\supset Y$. Then $D^{\prime}$ is a normal 
crossing divisor of $\ov{Y}$ 
on a neighborhood of 
$\pi^{-1}_{\ov{Y}}(\{0\})$ and for the 
inclusion maps 
$i_{D^{\prime}}:D^{\prime} \hookrightarrow \ov{Y}$ 
and $j_{Y^{\prime}}:Y^{\prime} \hookrightarrow \ov{Y}$ 
there exists an isomorphism 
\[ 
\psi_{\pi_{\ov{Y}},\lambda}( (i_{D})_{*} (i_{D})^{-1}
R(j_Y)_{*}\CC_{Y}) \simeq 
\psi_{\pi_{\ov{Y}},\lambda}(
 (i_{D^{\prime}})_{*} (i_{D^{\prime}})^{-1}
R(j_{Y^{\prime}})_{*}\CC_{Y^{\prime}}). 
\]
Note that for the natural stratification of 
the normal crossing divisor $D^{\prime} \subset 
\ov{Y}$ by the strata 
\[
D^{\prime}_{\sigma} := 
\ov{Y} \cap \bigl( \ov{T_{\sigma}} \setminus 
\bigcup_{\tau \in \Xi, \ \sigma \precneqq \tau} 
 \ \ov{T_{\tau}} \bigr) 
\subset D^{\prime} \qquad 
( \sigma \in \Xi \setminus \{ 0 \}) 
\]
the cohomology sheaves 
$H^j (i_{D^{\prime}})^{-1}
R(j_{Y^{\prime}})_{*}\CC_{Y^{\prime}}$ 
($j \in \ZZ$) are constructible. 
Moreover their restrictions to 
each stratum $D^{\prime}_{\sigma} \subset D^{\prime}$ 
are constant. Then by cutting the support of 
the complex $(i_{D^{\prime}})^{-1}
R(j_{Y^{\prime}})_{*}\CC_{Y^{\prime}} 
\in \BDC_{c}( D^{\prime} )$ by the 
stratification and truncating each of 
the resulting complexes, it suffices to 
prove the vanishing 
\[
R \Gamma ( \pi_{\ov{Y}}^{-1}( \{ 0 \} ); 
\psi_{\pi_{\ov{Y}},\lambda}( 
\CC_{\ov{Y} \cap \ov{T_{\sigma}}} ) )
\simeq 0. 
\]
for any $\lambda \notin R_{f}$ and 
any cone $\sigma \in \Xi \setminus \{ 0 \}$. 
Fixing such $\lambda$ and $\sigma$ we 
shall prove the vanishing from now. Set 
$ \pi_{\sigma}= \pi_{\ov{Y}}|_{\ov{Y} 
\cap \ov{T_{\sigma}}} : 
\ov{Y} \cap \ov{T_{\sigma}} \longrightarrow \CC$ 
and $\F_{\sigma}= \CC_{\ov{Y} \cap \ov{T_{\sigma}}}$. 
Then it is enough to show the vanishing 
\[
R \Gamma ( \pi_{\sigma}^{-1}( \{ 0 \} ); 
\psi_{\pi_{\sigma},\lambda}( \F_{\sigma} ) ) 
\simeq 0. 
\]
By a classical result on 
the decompositions of nearby cycle 
perverse sheaves associated to normal crossing 
divisors (see e.g. 
Dimca-Saito \cite[Section 1.4]{D-S}), 
each graded piece of the nearby cycle 
perverse sheaf $\psi_{\pi_{\sigma},\lambda}( \F_{\sigma} ) 
[n- \dim \sigma -1] 
\in \BDC_{c}( \pi_{\sigma}^{-1}( \{ 0 \} ) )$
of a filtration 
has a decomposition into 
some mininal extension perverse sheaves of 
$\SL_{\tau} [n- \dim \tau ] \in \BDC_{c}(
 \ov{Y} \cap T_{\tau} )$, where 
$\tau$ is a cone in $\Sigma \setminus \Xi$ 
such that $\sigma \prec \tau$ and 
$\SL_{\tau}$ is a rank one local system on 
the ($n- \dim \tau$)-dimensional 
smooth hypersurface 
$\ov{Y} \cap T_{\tau} \subset T_{\tau} 
\simeq ( \CC^*)^{n- \dim \tau +1}$. 
Moreover the cones $\tau$ appearing in 
this decomposition should satisfy the 
following condition. Let $\tau$ be a cone 
in $\Sigma \setminus \Xi$ such that 
$\sigma \prec \tau$ and let 
$\alpha_1, \ldots, \alpha_k \in \tau \cap 
( \ZZ^{n+1} \setminus \{ 0 \} )$ be the 
primitive vectors on the edges of 
$\tau$ not contained in the hyperplane 
$\RR^n \times \{ 0 \} \subset \RR^{n+1}$. 
Set $b_i=q( \alpha_i) >0$ ($1 \leq i \leq k$) 
and $d_{\tau}= {\rm gcd} \{ b_i \ | \ 
1 \leq i \leq k \} >0$. Then, for any 
$1 \leq i \leq k$ the order of the 
zero of the function 
$\pi_{\sigma}$ along the 
divisor of $\ov{Y} \cap \ov{T_{\sigma}}$ 
associated to the ray containing 
$\alpha_i$ is equal to $b_i>0$. Moreover 
for the cone 
$\tau$ to appear in the decomposition 
it should satisfy the condition 
$\lambda^{d_{\tau}}=1$. In such a case, 
the rank one local system $\SL_{\tau}$ 
on $\ov{Y} \cap T_{\tau}$ has the 
following condition. Let $\rho \in \Sigma 
\setminus \Xi$ be a ray such that 
$\tau \cap \rho = \{ 0 \}$ and 
$\tau ( \rho ) := \tau + \rho$ is a cone 
in $\Sigma$. Let $\beta \in \rho \cap 
( \ZZ^{n+1} \setminus \{ 0 \} )$ be the 
primitive vector on it. Then for any 
such $\rho$ the monodromy of the local 
system $\SL_{\tau}$ around the divisor 
$\ov{Y} \cap \ov{T_{\tau ( \rho ) }} \subset 
\ov{Y} \cap \ov{T_{\tau}}$ is given by 
the multiplication by the complex 
number $\lambda^{-q( \beta )} \in \CC$. 
By cutting the supports of 
the minimal extension perverse sheaves 
by the toric 
stratifications of $\ov{T_{\tau}}$, 
for the proof of the theorem it 
suffices to prove the vanishing 
\[ 
R \Gamma_c ( \ov{Y} \cap T_{\tau}; 
\SL_{\tau} ) \simeq 0
\]
for any cone $\tau$ in $\Sigma \setminus \Xi$ 
such that $\sigma \prec \tau$ and 
$\lambda^{d_{\tau}}=1$. 
For a cell $F$ in $\mathcal{S}$ let 
$\widetilde{F}\prec \UH_{f}$ be the  
unique compact face 
of $\UH_{f}$ such that $F=p(\widetilde{F})$ 
and $F^{\circ} \in \Sigma_0$ the 
cone which corresponds to it 
in the dual fan $\Sigma_0$. Then for 
the cone $\tau$ there exists a unique 
cell $F \in \mathcal{S}$ such that 
$F \subset \partial P$ and 
$\relint \tau \subset \relint F^{\circ}$. 
Let $\tau^{\prime}$ be a cone in 
$\Sigma \setminus \Xi$ 
such that $\tau^{\prime} \subset F^{\circ}$, 
$\dim \tau^{\prime} = \dim F^{\circ}$ and 
$\tau \prec \tau^{\prime}$. 
Then by the smoothness of the cone 
$\tau^{\prime}$ we can easily show the 
equality $m_F=d_{\tau^{\prime}}$. By our 
assumption $\lambda \notin R_f$ we thus 
obtain $\lambda^{d_{\tau^{\prime}}} \not= 1$. 
This implies that there exists a ray $\rho$ of 
$\tau^{\prime}$ such that 
$\tau \cap \rho = \{ 0 \}$ and 
the primitive vector $\beta \in \rho \cap 
( \ZZ^{n+1} \setminus \{ 0 \} )$ on it 
satisfies the condition 
$\lambda^{-q( \beta )} \not= 1$. 
Moreover, since the supporting faces 
of $\tau$ and $\tau^{\prime}$ in $\UH_{f}$ 
coincide and are equal to $\widetilde{F}
\prec \UH_{f}$, we have a product decomposition 
\[
\ov{Y} \cap T_{\tau} \simeq Z \times ( \CC^*)^k 
\]
for some variety $Z$ and $k>0$ such that 
the equation of the divisor 
$\ov{T_{\tau ( \rho )}} \subset \ov{T_{\tau}}$ 
corresponds to a coordinate of the torus 
$( \CC^*)^k $. 
Now the desired vanishing follows from 
the K\"unneth formula. 
This completes the proof.
\end{proof}

\begin{corollary}\label{CoEQEEP} 
Assume that the family $Y$ is sch\"on.
Then for any $\lambda\notin R_{f}$ and $t\in \CS$ such that 
$0<|t|\ll1$ we have the concentration 
\[H^{j}_{c}(Y_{t};\CC)_{\lambda}\simeq 0 \qquad (j\neq n-1).\]
Moreover for such $\lambda$ and $t$ 
the filtration on $H^{j}_{c}(Y_{t};\CC)_{\lambda}$ 
induced by the weight filtration of 
Deligne's mixed Hodge structure on 
$H^{j}_{c}(Y_{t};\CC)$ is concentrated in degree $n-1$.
\end{corollary}

\begin{proof}
Assume that $\lambda\notin R_{f}$.
Then the first assertion is already shown in 
Proposition \ref{Gysin}. However we shall give 
a new proof to it by using Theorem~\ref{th:2}. 
By Theorem~\ref{th:2} for $t\in\CC$ such that 
$0<|t|\ll1$ there exist isomorphisms
\[H^{j}_{c}(Y_{t};
\CC)_{\lambda}\xrightarrow{\sim}H^{j}(Y_{t};\CC)_{\lambda}
\qquad (j\in\ZZ).\]
Since the $(n-1)$-dimensional variety 
$Y_t$ is affine and smooth, 
by the (generalized) Poincar\'e duality the left 
(resp. right) hand side vanishes for 
$j<n-1$ (resp. $j>n-1$). 
We thus obtain the concentration 
\[H^{j}_{c}(Y_{t};\CC)_{\lambda}\simeq 0 \qquad (j\neq n-1).\] 
By comparing the weights of the mixed Hodge 
modules associated to $R\pi_{!}\CC_{Y}$ and 
$R\pi_{*}\CC_{Y}$ we see that the only 
non-trivial cohomology group 
$H^{n-1}_{c}(Y_{t};\CC)_{\lambda}$ has a pure weight $n-1$. 
See also the proof of 
Sabbah \cite[Theorem 13.1]{Sabbah-2}.  
\end{proof}

Now let $X$ be a general variety and for 
some $\varepsilon >0$ consider a family 
$Y\subset B(0; \e )^* \times X$
of subvarieties of $X$ over the 
punctured disk $B(0; \e )^* \subset \CC$.
Let $\pi: B(0; \e )^* \times X \to B(0; \e )^*$ 
be the projection and
for $t\in\CC$ such that $0<|t|<\varepsilon$ set 
$Y_{t}=Y\cap\pi^{-1}(t)\subset X$.
Then we obtain the monodromy automorphisms
\[\Phi_{j}:H^{j}_{c}(Y_{t};\CC)\xrightarrow{\sim}
H^{j}_{c}(Y_{t};\CC) \qquad (j\in\ZZ)\]
for $0<|t|<\varepsilon$.
For $\lambda\in \CC$ let
\[H^{j}_{c}(Y_{t};\CC)_{\lambda} \subset H^{j}_{c}(Y_{t};\CC)\] 
be the generalized eigenspace of $\Phi_{j}$ for the eigenvalue $\lambda$.
Then we have an isomorphism
\[H^{j}\psi_{t,\lambda}(j_{!}R\pi_{!}\CC_{Y})\simeq 
H^{j}_{c}(Y_{t};\CC)_{\lambda}\]
for any $j \in \ZZ$ and $\lambda\in\CC$.
Considering the mixed Hodge module over 
the vector space 
$H^{j}\psi_{t}(j_{!}R\pi_{!}\CC_{Y})$ we obtain a 
 mixed Hodge structure which 
coincides with the classical limit 
mixed Hodge structure (see El Zein~\cite{E-Z} 
and Steenbrink-Zucker~\cite{S-Z}). 
We denote it by $H^{j}_{c}(Y_{\infty};\CC)$. 
For $\lambda \in \CC$ we denote also by 
$H^{j}_{c}(Y_{\infty};\CC)_{\lambda}$ the 
generalized eigenspace of the monodromy 
\[ \Phi_j: H^{j}_{c}(Y_{\infty};\CC)
\xrightarrow{\sim} H^{j}_{c}(Y_{\infty};\CC) \]
for the eigenvalue $\lambda$ 
endowed with the weight and Hodge filtrations 
induced from those of $H^{j}_{c}(Y_{\infty};\CC)$. 
The weight filtration $M_{\bullet}$ on  
$H^{j}_{c}(Y_{\infty};\CC)$ is the 
``relative'' monodromy filtration 
with respect to its Deligne's weight filtration 
$W_{\bullet}$ in the following sense. 
Let $\Phi_{j}^{u}$ be the unipotent part of the 
monodromy $\Phi_{j}$ and set 
$N=\log{\Phi_{j}^{u}}:H^{j}_{c}(Y_{\infty};\CC)\to 
H^{j}_{c}(Y_{\infty};\CC)$.
Then for any $r\in \ZZ$ the filtration 
$M(r)_{\bullet}$ on the graded piece 
$V_{r}=Gr^{W}_{r}H^{j}_{c}(Y_{\infty};\CC)$ induced by 
$M_{\bullet}$ and the morphism 
$N(r): V_{r}\to V_{r}$ induced by $N$ give rise to isomorphisms 
\[N(r)^{k}:Gr_{r+k}^{M(r)}V_{r}\xrightarrow{\sim} 
Gr^{M(r)}_{r-k}V_{r} \qquad (k\geq 0).\]
Namely the filtration $M(r)_{\bullet}$ on $V_{r}$ is 
the monodromy filtration of 
the automorphism $Gr_{r}^{W}(\Phi_{j}):
V_{r}\xrightarrow{\sim}V_{r}$ centered at $r$. 
For $\lambda\in \CC$ and $p,q,r\in \ZZ$ let 
$h^{p,q}(Gr_{r}^{W}H^{j}_{c}(Y_{\infty};\CC)_{\lambda})\geq 0$
be the dimension of the $(p,q)$-part of 
the above limit mixed Hodge structure on 
the graded piece $Gr^{W}_{r}H^{j}_{c}(Y_{\infty};\CC)_{\lambda}$ 
defined by the weight filtration $M(r)_{\bullet}$.

\begin{definition}[{Stapledon~\cite{Stapledon}}]\label{Epolydef}
For $\lambda\in\CC$ we define the equivariant refined 
limit mixed Hodge polynomial (resp. 
the equivariant limit mixed Hodge polynomial)
$E_{\lambda}(Y_{\infty};u,v,w)\in\ZZ[u,v,w]$ (resp. 
$E_{\lambda}(Y_{\infty};u,v)\in\ZZ[u,v]$) 
for the eigenvalue $\lambda\in\CC$ by 
\begin{align*}
E_{\lambda}(Y_{\infty};u,v,w)&=\sum_{p,q\in \ZZ}\sum_{j\in\ZZ}(-1)^{j}
h^{p,q}(Gr^{W}_{r}H^{j}_{c}(Y_{\infty};\CC)_{\lambda}) \ u^{p}v^{q}w^{r},
\\E_{\lambda}(Y_{\infty};u,v)&=\sum_{p,q\in \ZZ}\sum_{j\in\ZZ}(-1)^{j}
h^{p,q}(H^{j}_{c}(Y_{\infty};\CC)_{\lambda}) \ u^{p}v^{q}.
\end{align*}
\end{definition}
By this definition, obviously we have 
$E_{\lambda}(Y_{\infty};u,v,1)=E_{\lambda}(Y_{\infty};u,v)$ 
for any $\lambda\in\CC$. 

\begin{lemma}\label{new-lemma}
Let $Z \subset Y \subset B(0; \e )^* \times X$ be a 
subfamily of $Y$. Then for any $\lambda \in \CC$ 
we have 
\[
E_{\lambda}(Y_{\infty};u,v,w)=E_{\lambda}(Z_{\infty};u,v,w)+
E_{\lambda}((Y \setminus Z)_{\infty};u,v,w) \qquad 
(0<|t| < \e ). 
\]
\end{lemma}
\begin{proof}
There exists a long exact sequence 
\[
\cdots \to H^j R \pi_!  \QQ^{H}_{Y \setminus Z} \to 
H^j R \pi_!  \QQ^{H}_{Y} \to H^j R \pi_!  \QQ^{H}_{Z} \to 
H^{j+1} R \pi_!  \QQ^{H}_{Y \setminus Z} \to \cdots 
\]
of mixed Hodge modules. For any $r \in \ZZ$ 
by taking the $r$-th graded piece $Gr^{W}_{r}( \cdot )$ 
of each term in it, we obtain again a long 
exact sequence. Then the assertion follows by 
applying the (exact) nearby cycle functor 
$\psi_t( \cdot )$ of mixed Hodge modules 
to them. 
\end{proof}

Now let us return to the family 
$Y \subset B(0; \e )^* \times (\CC^*)^{n}$ of 
hypersurfaces of $T_{0}=(\CC^*)^{n}$ over the punctured disk
$B(0; \e )^*$. 
Then by Proposition \ref{GLMCD} and 
Theorem~\ref{RCS} we obtain the following corollary. 
\begin{corollary}\label{cor:1}
Assume that the family 
$Y \subset B(0; \e )^* \times (\CC^*)^{n}$ is sch\"on.
Then we have
\begin{align*}
E_{\lambda}(Y_{\infty};u,v)&=\sum_{\relint{F}\subset\Int{P}}
E_{\lambda}(V_{F}\circlearrowleft \MH ; u,v)\cdot (1-uv)^{n-\dim{F}},
\end{align*}
for any $\lambda\in\CC$.
Here the equivariant mixed Hodge polynomials 
$E_{\lambda}(V_{F}\circlearrowleft \MH ; u,v)\in\ZZ[u,v]$ 
are defined by Deligne's mixed 
Hodge structure of the variety $V_{F}$ and the 
semisimple action on its cohomology groups as
\begin{align*}
E_{\lambda}(V_{F}\circlearrowleft \MH ; u,v)&=
\sum_{p,q\in \ZZ} \sum_{j\in\ZZ}
(-1)^{j}h^{p,q}(H^{j}_{c}(V_{F};\CC)_{\lambda})u^{p}v^{q}.
\end{align*}
\end{corollary}

The following fundamental 
result was obtained by Stapledon in \cite{Stapledon}. 
For $\lambda \in \CC$ set
\[
\varepsilon ( \lambda ) =\left\{
\begin{array}{ll}
1 &( \lambda =1 )\\\\ 
0 &( \lambda \not= 1 ) \\
\end{array}
\right.
\]
and recall that we have 
\begin{equation}
h^{*}_{\lambda}(P,\nu_{f};u,v,w)= 
\sum_{Q\prec P}w^{\dim{Q}+1}l^{*}_{\lambda}
(Q,\nu_{f}|_{Q};u,v) \cdot g([Q,P];uvw^2).
\end{equation}

\begin{theorem}[{\cite[Theorem 5.7]{Stapledon}}] \label{st:5.7} 
Assume that the family $Y$ is sch\"on.
Then for any $\lambda 
 \in \CC$ we have
\begin{equation}
 uvw^2 E_{\lambda}(Y_{\infty};u,v,w) =
\varepsilon ( \lambda ) \cdot (uvw^2-1)^n + 
(-1)^{n-1} h^{*}_{\lambda}(P,\nu_{f};u,v,w). 
\end{equation}
\end{theorem}

In \cite{K-S-1} and \cite{Stapledon} Katz and 
Stapledon proved this theorem 
from Corollary \ref{cor:1} by 
using \cite[Section 2]{M-T-4} and 
some deep results on combinatorics 
developed by Katz-Stapledon in \cite{K-S-1} that 
build on earlier work of Stanley~\cite{Stanley}. 
By Theorem~\ref{st:5.7} and 
Proposition \ref{Gysin} we 
immediately obtain the following result. 
Note that by the definition of the integers 
$m_F$ the condition $\lambda\notin R_{f}$ 
implies the vanishing $l^{*}_{\lambda}
(Q,\nu_{f}|_{Q};u,v)=0$ for 
any proper face $Q \not= P$ of $P$. 

\begin{theorem}\label{EQEEP} 
Assume that the family $Y$ is sch\"on.
Then for any $\lambda\notin R_{f}$ the equivariant 
mixed Hodge polynomial $E_{\lambda}(Y_{\infty};u,v,w)\in\ZZ[u,v,w]$
for the eigenvalue $\lambda$ is concentrated in degree $n-1$ 
in the variable $w$ and given by
\begin{align*}
E_{\lambda}(Y_{\infty};u,v,w)&=(-1)^{n-1}w^{n-1}\sum_{p,q}h^{p,q}(
H^{n-1}_{c}(Y_{\infty};\CC)_{\lambda})u^{p}v^{q}\\
&=(-1)^{n-1}\frac{w^{n-1}}{uv}l^{*}_{\lambda}(P,\nu_{f};u,v)\\
&=(-1)^{n-1}\frac{w^{n-1}}{uv}\sum_{F\in\mathcal{S}}
v^{\dim{F}+1}l^{*}_{\lambda}(F,\nu_{f}|_{F};uv^{-1})
 \cdot l_{P}(\mathcal{S},F;uv).
\end{align*}
In particular, by setting $u=v=s$ and $w=1$ we have
\begin{align*}
E_{\lambda}(Y_{\infty};s,s)&=(-1)^{n-1}\sum_{k\geq 0} 
\Bigl(
\sum_{p+q=k}h^{p,q}(H^{n-1}_{c}(Y_{\infty};\CC)_{\lambda})
\Bigr) s^{k}\\
&=(-1)^{n-1}\frac{1}{s^2}\sum_{F\in\mathcal{S}}
s^{\dim{F}+1}l^{*}_{\lambda}(F,\nu_{f}|_{F};1)
 \cdot l_{P}(\mathcal{S},F;s^2).
\end{align*}
\end{theorem}

Note that the concentration in this theorem 
corresponds to that in Corollary \ref{CoEQEEP}. 
We also obtain the following corollary. 
Note that for $\lambda\notin R_{f}$ 
by Corollary \ref{CoEQEEP} 
(or Theorem \ref{EQEEP}) and 
the construction of the weight filtration of 
the limit mixed Hodge structure 
$H^{n-1}_{c}(Y_{\infty};\CC)$ 
the filtration on $H^{n-1}_{c}(Y_{\infty};\CC)_{\lambda}$ 
induced by it is equal to the monodromy 
filtration for the monodromy 
$\Phi_{n-1}:H^{n-1}_{c}(Y_{\infty};\CC)_{\lambda}
\xrightarrow{\sim} H^{n-1}_{c}(Y_{\infty};\CC)_{\lambda}$
centered at $n-1$. 

\begin{corollary}\label{th:3}
Assume that the family 
$Y \subset B(0; \e )^* \times (\CC^*)^{n}$ is sch\"on.
Then for any $\lambda\notin R_{f}$ we have the symmetry
\[\sum_{p+q=n-1+k}h^{p,q}(H^{n-1}_{c}(Y_{\infty};\CC)_{\lambda})=
\sum_{p+q=n-1-k}h^{p,q}(H^{n-1}_{c}(Y_{\infty};\CC)_{\lambda})\]
for any $k\geq 0$.
\end{corollary}

By Theorem \ref{EQEEP} and 
Corollary \ref{th:3}, for any $\lambda\notin R_{f}$ the Jordan normal 
form of the middle-dimensional monodromy 
\[\Phi_{n-1}:H^{n-1}_{c}(Y_{t};\CC)_{\lambda}
\xrightarrow{\sim} H^{n-1}_{c}(Y_{t};\CC)_{\lambda}\]
on $H^{n-1}_{c}(Y_{t};\CC)_{\lambda}$ 
can be recovered from the polynomial $E_{\lambda}(
Y_{\infty};u,v,w)\in\ZZ[u,v,w]$ as follows.
By  Theorem \ref{EQEEP} for the polynomial $E_{\lambda}(
Y_{\infty};u,v)=E_{\lambda}(Y_{\infty};u,v,1)\in\ZZ[u,v]$ we have
\[E_{\lambda}(Y_{\infty};u,v,w)=E_{\lambda}(Y_{\infty};u,v)\cdot w^{n-1}.\]
Moreover the 
polynomial $\tl{E_{\lambda}}(Y_{\infty};s):=
(-1)^{n-1}E_{\lambda}(Y_{\infty};s,s)\in\ZZ[s]$ has 
only non-negative coefficients and the symmetry centered at $n-1$.
By the Lefschetz decomposition of 
$H^{n-1}_{c}(Y_{\infty};\CC)_{\lambda}$ 
there exist non-negative integers 
$q_{\lambda,i}\geq 0~(0\leq i \leq n-1)$ such that 
\begin{align*}
\tl{E_{\lambda}}(Y_{\infty};s)=
&q_{\lambda,0}(1+s^{2}+ \cdots +s^{2n-4}+s^{2n-2})
\\&+q_{\lambda,1}(s+s^{3}+ \cdots +s^{2n-3})
\\&+q_{\lambda,2}(s^{2}+ \cdots +s^{2n-4})
\\&+ \cdots \cdots 
\\&+q_{\lambda,n-1}s^{n-1}.
\end{align*}
For $\lambda\in \CC$ and $m\geq 1$ denote by $J_{\lambda,m}$ the 
number of the Jordan blocks in the monodromy automorphism
\[\Phi_{n-1}:H^{n-1}_{c}(Y_{t};\CC)\xrightarrow{\sim}H^{n-1}_{c}(Y_{t};\CC) 
\qquad (0<|t|\ll1)\]
for the eigenvalue $\lambda$ with size $m$.

\begin{proposition}\label{th:4}
Assume that the family $Y$ is sch\"on.
Then for any $\lambda\notin R_{f}$ we have
\[J_{\lambda,m}=q_{\lambda,n-m} \qquad (1\leq m\leq n).\]
\end{proposition}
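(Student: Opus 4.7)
The plan is to read off the Jordan-block structure of $\Phi_{n-1}$ on $H^{n-1}_c(Y_t;\CC)_\lambda$ directly from the limit weight filtration, and then match two generating-function identities to identify $J_{\lambda,m}$ with $q_{\lambda,n-m}$. The two key inputs, both already established in the paper, are: Theorem \ref{EQEEP}, which for $\lambda \notin R_f$ yields the concentration in degree $n-1$ together with
\[
\tl{E_\lambda}(Y_t;s) \;=\; \sum_{r} \dim_\CC Gr^W_r H^{n-1}_c(Y_t;\CC)_\lambda \cdot s^r;
\]
and the remark immediately preceding Corollary \ref{th:3}, which identifies $W_\bullet$ on $H^{n-1}_c(Y_t;\CC)_\lambda$ with the monodromy filtration of $\Phi_{n-1}$ centered at $n-1$.

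Next, I would carry out the standard $\mathfrak{sl}_2$-type computation: writing $N = \log \Phi_{n-1}^u$ for the logarithm of the unipotent part of the monodromy restricted to $H^{n-1}_c(Y_t;\CC)_\lambda$, each Jordan block of $N$ of size $m$ contributes exactly one dimension to each of the graded pieces $Gr^W_{n-m}, Gr^W_{n-m+2}, \ldots, Gr^W_{n+m-2}$ of the monodromy filtration centered at $n-1$, and nothing to any other graded piece. Summing over all Jordan blocks thus yields
\[
\tl{E_\lambda}(Y_t;s) \;=\; \sum_{m=1}^{n} J_{\lambda,m}\bigl(s^{n-m} + s^{n-m+2} + \cdots + s^{n+m-2}\bigr).
\]

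To conclude, I would compare this identity with the Lefschetz-type decomposition given in the statement of the proposition, whose $i$-th summand is $q_{\lambda,i}(s^i + s^{i+2} + \cdots + s^{2n-2-i})$. The substitution $i = n-m$ makes these two ``strings'' coincide term by term, and since the polynomials $\{s^i + s^{i+2} + \cdots + s^{2n-2-i}\}_{0 \leq i \leq n-1}$ are visibly linearly independent in $\ZZ[s]$ (each starts at a different lowest degree $i$), the nonnegative integer coefficients in the two decompositions must agree, giving $J_{\lambda,m} = q_{\lambda,n-m}$ for $1 \leq m \leq n$. The argument is mechanical once both ingredients are in place; the only conceptual point requiring care is to confirm, using the identification of $W_\bullet$ with the monodromy filtration centered at $n-1$, that the Lefschetz-style expansion introduced in the statement really does coincide with the primitive decomposition coming from $\mathfrak{sl}_2$-theory applied to $N$ on $H^{n-1}_c(Y_t;\CC)_\lambda$.
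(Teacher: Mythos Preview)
Your proposal is correct and follows essentially the same approach as the paper. The paper does not give an explicit proof of this proposition; it is presented as a direct consequence of the preceding discussion (the identification of the limit weight filtration on $H^{n-1}_c(Y_t;\CC)_\lambda$ with the monodromy filtration centered at $n-1$, together with the Lefschetz/primitive decomposition), and your write-up simply makes that implicit argument explicit and verifies the matching of the two expansions.
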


Recall that for a cell $F\in\mathcal{S}$ the local 
$h$-polynomial $l_{P}(\mathcal{S},F;t)\in\ZZ[t]$ 
has non-negative coefficients and the symmetry
\[l_{P}(\mathcal{S},F;t)=t^{n-\dim{F}}l_{P}(\mathcal{S},F;t^{-1})\]
(see \cite[Remark 4.9]{Stapledon}).
Moreover it is unimodal.
Hence there exist non-negative integers $l_{F,i}~(0\leq i 
\leq \lfloor \frac{n-\dim{F}}{2} \rfloor)$ 
such that
\begin{align*}
l_{P}(\mathcal{S},F;t)&=l_{F,0}(1+t+t^2+ \cdots+t^{n-\dim{F}})
\\&+l_{F,1}(t+t^2+ \cdots +t^{n-\dim{F}-1})
\\&+l_{F,2}(t^2+ \cdots +t^{n-\dim{F}-2})
\\&+ \cdots \cdots.
\end{align*}
We set
\[\tl{l}_{P}(\mathcal{S},F;t)=
\sum_{i=0}^{\lfloor\frac{n-\dim{F}}{2}\rfloor}l_{F,i}t^{i}.\]
Then by Theorem \ref{EQEEP} and 
Proposition \ref{th:4} we obtain the following result. 

\begin{theorem}\label{torusjordan}
Assume that the family $Y$ is sch\"on. 
Then for $\lambda\notin R_{f}$ we have
\[\sum_{m=0}^{n-1}J_{\lambda,n-m}s^{m+2}=\sum_{F\in\mathcal{S}}
s^{\dim{F}+1}l^{*}_{\lambda}(F,\nu_{f}|_{F};1) 
  \cdot \tl{l}_{P}(
\mathcal{S},F;s^{2}).\]
In particular, we have
\[J_{\lambda,n}=\sum_{F\in\mathcal{S},\ \dim{F}=1}
l^{*}_{\lambda}(F,\nu_{f}|_{F};1) \cdot l_{F,0}.\] 
\end{theorem}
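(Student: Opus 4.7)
The plan is to take the formula of Theorem~\ref{EQEEP} (with $u=v=s$ and $w=1$), expand everything into a sum of Lefschetz-type palindromic basis elements centered at $n-1$, and read off the multiplicities $q_{\lambda,j}$ via the Lefschetz decomposition appearing just before Proposition~\ref{th:4}.

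First I would rewrite
\[
\tl{E}_\lambda(Y_t;s):=(-1)^{n-1}E_\lambda(Y_t;s,s)=\frac{1}{s^2}\sum_{F\in\mathcal{S}} s^{\dim F + 1}\,l^*_\lambda(F,\nu_f|_F;1)\cdot l_P(\mathcal{S},F;s^2)
\]
using Theorem~\ref{EQEEP}, and then substitute the unimodal decomposition
\[
l_P(\mathcal{S},F;s^2)=\sum_{i=0}^{\lfloor(n-\dim F)/2\rfloor} l_{F,i}\,s^{2i}\bigl(1+s^2+\cdots+s^{2(n-\dim F - 2i)}\bigr)
\]
that defines $\tl{l}_P(\mathcal{S},F;t)$. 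A direct exponent check shows that the contribution of a pair $(F,i)$ becomes
\[
l^*_\lambda(F,\nu_f|_F;1)\cdot l_{F,i}\cdot\bigl(s^{j}+s^{j+2}+\cdots+s^{2n-2-j}\bigr), \qquad j:=\dim F - 1 + 2i,
\]
which is precisely one of the palindromic basis elements centered at $n-1$ occurring in the Lefschetz expansion of $\tl{E}_\lambda(Y_t;s)$.

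Next I would invoke uniqueness: the $n$ polynomials $s^j+s^{j+2}+\cdots+s^{2n-2-j}$ $(0\le j\le n-1)$ are linearly independent (distinct minimal degrees), so any polynomial palindromic about $n-1$ has a unique such expansion. By Corollary~\ref{th:3}, $\tl{E}_\lambda(Y_t;s)$ has this symmetry, so comparing coefficients with the previous step yields
\[
q_{\lambda,j}=\sum_{\substack{F\in\mathcal{S},\ i\ge 0\\ \dim F - 1 + 2i = j}} l^*_\lambda(F,\nu_f|_F;1)\cdot l_{F,i}.
\]

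Finally I would apply Proposition~\ref{th:4}, which identifies $J_{\lambda,n-m}=q_{\lambda,m}$ for $0\le m\le n-1$, and regroup the double sum:
\[
\sum_{m=0}^{n-1} J_{\lambda,n-m}\, s^{m+2}=\sum_{m=0}^{n-1} q_{\lambda,m}\, s^{m+2}=\sum_{F,i} l^*_\lambda(F,\nu_f|_F;1)\, l_{F,i}\, s^{\dim F + 1 + 2i}=\sum_{F\in\mathcal{S}} s^{\dim F + 1}\,l^*_\lambda(F,\nu_f|_F;1)\,\tl{l}_P(\mathcal{S},F;s^2).
\]
For the ``in particular'' statement, taking $m=0$ picks out the coefficient of $s^2$, which forces $\dim F + 2i = 1$; since $\lambda\notin R_f$ implies $\lambda\neq 1$ the potential contribution from the empty cell vanishes and only $\dim F=1$, $i=0$ survives, giving $J_{\lambda,n}=\sum_{\dim F=1} l^*_\lambda(F,\nu_f|_F;1)\, l_{F,0}$. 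The only subtle part of the argument is the exponent bookkeeping and the index shift between $q_{\lambda,m}$ and $J_{\lambda,n-m}$; the conceptual input is the uniqueness of the Lefschetz-type decomposition of a polynomial palindromic about $n-1$, which is immediate from Corollary~\ref{th:3}.
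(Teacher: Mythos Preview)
Your proposal is correct and follows exactly the route the paper indicates (the paper's proof is simply the sentence ``Then by Theorem~\ref{EQEEP} and Proposition~\ref{th:4} we obtain the following result''); you have carefully filled in the exponent bookkeeping and the uniqueness of the palindromic decomposition that the paper leaves implicit. One small remark: the lower-bound check $j=\dim F-1+2i\ge 0$ in your uniqueness step is guaranteed because for $\lambda\neq 1$ the contributions from cells with $\dim F\le 0$ vanish (as you note for the empty cell, and similarly $l^*_\lambda$ vanishes on $0$-cells since $\nu_f$ is integral on vertices), so every nonzero term lands in the basis range $0\le j\le n-1$.
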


The multiplicities of the eigenvalues 
$\lambda \not= 1$ in the 
middle-dimensional monodromy 
$\Phi_{n-1}$ are described more simply as follows. 

\begin{theorem}\label{torusmulti}
Assume that the family $Y$ is sch\"on. Then 
for $\lambda \not= 1$ 
the multiplicity of the factor $t- \lambda$ in 
the characteristic polynomial of the monodromy 
\[\Phi_{n-1}:H^{n-1}_{c}(Y_{t};\CC)\xrightarrow{\sim}H^{n-1}_{c}(Y_{t};\CC) 
\qquad (0<|t|\ll1)\]
is equal to that in 
\[ 
\prod_{\relint F \subset \Int P, \ \dim F=n} 
(t^{m_F}-1)^{\Vol_{\ZZ}( \widetilde{F} )}, 
\]
where $\Vol_{\ZZ}( \widetilde{F}) \in \ZZ_{>0}$ 
is the normalized volume i.e. the 
$n!$ times usual volume $\Vol ( \widetilde{F})$ 
of $\widetilde{F}$ 
with respect to the lattice $\Aff ( \widetilde{F} ) 
\cap \ZZ^{n+1} \simeq \ZZ^{n}$ in 
$\Aff ( \widetilde{F} ) \simeq \RR^{n}$. 
\end{theorem}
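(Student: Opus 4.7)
My plan is to reduce Theorem \ref{torusmulti}, via Proposition \ref{Gysin} and Corollary \ref{cor:1}, to a per-cell equivariant Euler characteristic count that can then be evaluated by the Lefschetz trace formula combined with Kouchnirenko's formula.

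By Proposition \ref{Gysin}, for every $\lambda \neq 1$ the eigenspace $H^j_c(Y_t;\CC)_\lambda$ vanishes except in degree $j = n-1$, so the multiplicity of $t-\lambda$ in the characteristic polynomial of $\Phi_{n-1}$ equals $\dim H^{n-1}_c(Y_t;\CC)_\lambda = (-1)^{n-1}E_\lambda(Y_t;1,1)$. Specializing Corollary \ref{cor:1} at $u=v=1$ kills every term with $\dim F < n$ via the factor $(1-uv)^{n-\dim F}$, while every $n$-dimensional cell $F \in \MCS$ automatically satisfies $\relint F \subset \Int P$. This reduces the problem to evaluating, for each top-dimensional cell $F$, the equivariant Euler characteristic $\chi(V_F;\CC)_\lambda = E_\lambda(V_F \circlearrowleft \MH;1,1)$.

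For such an $F$, the polynomial $I^F_f$ is sch\"on on $T_F \simeq (\CS)^n$ and its Newton polytope equals $F$: the vertices of $F$ lift to lattice vertices of $\widetilde F$ whose coefficients, being leading coefficients of the Laurent series $a_v(t)$, are nonzero. Kouchnirenko's formula then gives $\chi(V_F) = (-1)^{n-1}\Vol_{\ZZ}(F)$, the volume being measured in the lattice $\Aff(F) \cap \ZZ^n$. The element $e_F \in T_F$ acts on $V_F$ by torus multiplication of exact order $m_F$, and since any non-identity torus translation is fixed-point-free on $T_F$, the Lefschetz trace formula yields $\chi_c(V_F^{e_F^k}) = \chi(V_F)$ for $m_F \mid k$ and $0$ otherwise. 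Inverting this discrete Fourier relation over $\mu_{m_F}$ gives
\[
\chi(V_F;\CC)_\lambda = \begin{cases} \dfrac{1}{m_F}\chi(V_F) & \text{if } \lambda^{m_F} = 1, \\ 0 & \text{otherwise.} \end{cases}
\]
The projection $p\colon \Aff(\widetilde F) \cap \ZZ^{n+1} \hookrightarrow \Aff(F) \cap \ZZ^n$ is injective with image of index exactly $m_F$, by the very definition of $m_F$; hence $\Vol_{\ZZ}(F;\Aff(F)\cap\ZZ^n) = m_F\,\Vol_{\ZZ}(\widetilde F)$. Combining these gives $\chi(V_F;\CC)_\lambda = (-1)^{n-1}\Vol_{\ZZ}(\widetilde F)$ when $\lambda^{m_F} = 1$, and $0$ otherwise.

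Summing over top-dimensional cells yields
\[
\dim H^{n-1}_c(Y_t;\CC)_\lambda = \sum_{\substack{F \in \MCS, \dim F = n \\ \lambda^{m_F} = 1}} \Vol_{\ZZ}(\widetilde F),
\]
which is exactly the multiplicity of $t-\lambda$ in $\prod_{\dim F = n,\ \relint F \subset \Int P}(t^{m_F}-1)^{\Vol_{\ZZ}(\widetilde F)}$, since the roots of $t^{m_F}-1$ are the $m_F$-th roots of unity and each such factor contributes multiplicity $\Vol_{\ZZ}(\widetilde F)$. The main subtlety I foresee is the careful identification of the $\mu_{m_F}$-action on $V_F$ encoded in $[V_F \circlearrowleft \MH] \in \M_\CC^{\hat\mu}$ with the torus translation by $e_F$ on which the Lefschetz argument rests; this compatibility is implicit in the proof of Theorem \ref{th:1} but must be made explicit before the Fourier inversion step is legitimate. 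A secondary check, that the Newton polytope of $I^F_f$ genuinely coincides with $F$, uses only the fact that vertices of $F$ lift to lattice vertices of $\widetilde F$ carrying nonzero leading coefficients.
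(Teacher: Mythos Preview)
Your proof is correct and follows essentially the same route as the paper's own argument, which merely states that the result follows ``by taking the Euler characteristics of the both sides of the equality in Theorem \ref{th:1}'' (together with Proposition \ref{Gysin}); you have simply filled in the details of that Euler-characteristic computation via the Lefschetz trace formula and Kouchnirenko's theorem. The paper also mentions an alternative proof via monodromy zeta functions as in \cite{M-T-2}, but your approach matches the one the authors actually sketch.
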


\begin{proof}
By Proposition \ref{Gysin} and the proof of Theorem \ref{RCS}, 
the assertion can be proved by calculating 
monodromy zeta functions as in 
\cite{M-T-2}. We can obtain it also just by 
taking the Euler characteristics of the 
both sides of the equality in Theorem \ref{RCS}. 
\end{proof}

\section{ Monodromies and limit mixed Hodge structures of families of 
hypersurfaces in $\CC^n$}\label{sec:4}

By Corollary~6.3 of \cite{Stapledon}, 
we can describe the Jordan normal forms of the 
monodromy automorphisms on the cohomology groups 
of a family of hypersurfaces in $\CC^n$ in 
terms of the polynomials defined in 
Subsection~\ref{polysec} under some conditions 
(see Remark~\ref{StaCom}).
In this section, we show that the monodromies 
for some eigenvalues can be described even 
if we drop the conditions imposed in 
Corollary~6.3 of \cite{Stapledon}.

Let $f(t,x)=\sum_{v\in\ZZ^n_{+}}a_{v}(t)x^{v}\in\BK[x_1,
\dots,x_n]~(a_{v}(x)\in\BK)$
be a polynomial of $x=(x_{1},\dots,x_{n})$ 
over the field $\BK=\CC(t)$ of rational functions of $t$.
Then as in Section~\ref{sec:3} we can define an (unbounded) polyhedron 
$\UH_{f}$ associated to it in $\RR^n_{+}\times \RR^{1}$ and 
its projection $P=p(\UH_{f})\subset \RR^{n}_{+}$.
We call $P$ the Newton polytope of $f\in \BK[x_{1},\dots,x_{n}]$.
Throughout this section we assume that $\dim P=n$.

Let $\Sigma_{0}$ be the dual fan of 
$\UH_{f}$ in $\RR^n\times\RR^{1}_{+}
\subset \RR^{n+1}$ and $\nu_{f}:P\to\RR$ the function 
defining the bottom part of the boundary
$\partial \UH_{f}$ of $\UH_{f}$.
Moreover by the subdivision $\mathcal{S}$ of $P$ into the lattice 
polytopes $p(\tl{F})~(\tl{F}\prec \UH_{f})$ we define polynomials
$I_{f}^{F}(x)\in\CC[x_{1},\dots ,x_{n}]$ and elements 
$[V_{F}\circlearrowleft \MH] 
\in \M_{\CC}^{\hat{\mu}}$ for cells $F \in \mathcal{S}$ 
as in Section \ref{sec:3}. 
In this situation, the hypersurface 
 $f^{-1}(0)\subset X= \CS_{t}\times{\CC}^{n}_{x}$ 
defines a family $Y$ of hypersurfaces of $X_{0}={\CC}^{n}_{x}$
over a small punctured disk $B(0; \e )^*$ ($0<\varepsilon\ll1$).
By the projection $\pi:\CS_{t}\times {\CC}^{n}_{x}
\twoheadrightarrow\CS_{t}$ for 
$t\in \CC$ such that $0<|t|<\varepsilon$ we set
$Y_{t}:=\pi^{-1}(t)\cap Y \subset \{ t 
\}\times X_{0}\simeq X_{0}={\CC}^{n}_{x}$.
We define also the sch\"onness of the family as in 
Definition~\ref{schon}.

We set $P_{\infty}:
=\overline{\partial P\cap \INT{\RR^{n}_{+}}} 
\subset \partial P$ and 
define a finite subset $R_{f}\subset \CC$ by 
\[R_{f}=\bigcup_{F\subset P_{\infty}}\{\lambda\in\CC\ |\ 
\lambda^{m_{F}}=1\}\subset \CC.\] 
Then we have the following result.

\begin{theorem}\label{th:syucyuu}
Assume that the family $Y$ of hypersurfaces 
in $X_{0}=\CC^n$ is sch\"on. 
Then for $\lambda\notin R_{f}$ the 
equivariant refined limit 
mixed Hodge polynomial $E_{\lambda}(Y_{\infty};u,v,w)\in\ZZ[u,v,w]$
for the eigenvalue $\lambda$ is concentrated in 
degree $n-1$ in the variable $w$ and given by
\begin{align*}
E_{\lambda}(Y_{\infty};u,v,w)&=(-1)^{n-1}w^{n-1}
\sum_{p,q}h^{p,q}(H^{n-1}_{c}(Y_{\infty};\CC)_{\lambda})u^{p}v^{q}\\
&=(-1)^{n-1}\frac{w^{n-1}}{uv}l^{*}_{\lambda}(P,\nu_{f};u,v)\\
&=(-1)^{n-1}\frac{w^{n-1}}{uv}\sum_{F
\in\mathcal{S}}v^{\dim{F}+1}l^{*}_{\lambda}(F,
\nu_{f}|_{F};uv^{-1}) \cdot l_{P}(\mathcal{S},F;uv).
\end{align*}
\end{theorem}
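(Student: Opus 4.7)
The plan is to reduce Theorem~\ref{th:syucyuu} to a topological concentration statement analogous to Theorem~\ref{th:2} in the affine setting, then derive the explicit formula by combining the additivity Lemma~\ref{new-lemma} over the torus stratification of $\CC^{n}$ with Stapledon's combinatorial identities. First, I would prove the topological concentration $H^{j}(Y_{t};\CC)_{\lambda} \simeq H^{j}_{c}(Y_{t};\CC)_{\lambda} \simeq 0$ for $j \neq n-1$ by adapting the proof of Theorem~\ref{th:2}. I would compactify $Y \subset B(0;\e)^{*} \times \CC^{n}$ inside a smooth toric variety $X_{\Sigma}$ where $\Sigma$ is a smooth refinement of the dual fan $\Sigma_{0}$ of $\UH_{f}$ containing the positive orthant fan of $\CC \times \CC^{n}$ (so that $\CC \times \CC^{n}$ sits as a Zariski open in $X_{\Sigma}$), and let $\ov{Y}$ be the closure. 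The crucial difference from Theorem~\ref{th:2} is that the coordinate hyperplanes $\{x_{i}=0\}$ of $\CC^{n}$ now belong to the ambient space of $Y$, so the ``divisor at infinity'' entering the reduction consists of $\ov{Y} \cap T_{\sigma}$ only for cones $\sigma \in \Sigma$ whose supporting faces $\widetilde{F} \prec \UH_{f}$ project into $P_{\infty} \subset \partial P$. Exactly as in Theorem~\ref{th:2}, the primitive decomposition of the nearby cycle perverse sheaves reduces the problem to the vanishing $R\Gamma_{c}(\ov{Y} \cap T_{\tau}; \mathcal{L}_{\tau}) \simeq 0$ for such $\tau$; the hypothesis $\lambda \notin R_{f}$ then gives $\lambda^{m_{F}} \neq 1$ for the face $F \subset P_{\infty}$ supporting $\tau$, producing a ray $\rho$ with non-trivial monodromy $\lambda^{-q(\beta)} \neq 1$ on $\mathcal{L}_{\tau}$, and the K\"unneth formula yields the vanishing.

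Combining the topological concentration with Poincar\'e duality on the smooth affine variety $Y_{t}$ of dimension $n-1$ gives $H^{j}_{c}(Y_{t};\CC)_{\lambda} \simeq 0$ for $j \neq n-1$, and Sabbah's weight argument, as in the proof of Corollary~\ref{CoEQEEP}, yields purity of $H^{n-1}_{c}(Y_{t};\CC)_{\lambda}$ in weight $n-1$. Consequently $E_{\lambda}(Y_{t};u,v,w)$ is concentrated in $w$-degree $n-1$ and equals $(-1)^{n-1} w^{n-1} \sum_{p,q} h^{p,q}(H^{n-1}_{c}(Y_{t};\CC)_{\lambda}) u^{p} v^{q}$.

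For the explicit formula, I would use the additivity Lemma~\ref{new-lemma} for the stratification $\CC^{n} = \bigsqcup_{J \subset \{1,\dots,n\}} T_{J}$ into coordinate tori $T_{J} \simeq (\CC^{*})^{|J|}$, giving $E_{\lambda}(Y_{t};u,v,w) = \sum_{J} E_{\lambda}((Y \cap T_{J})_{t}; u,v,w)$, and apply Theorem~\ref{st:5.7} on each $T_{J}$ (with Newton polytope $P_{J} = P \cap \RR^{J}$) to express the summand via $h^{*}_{\lambda}(P_{J}, \nu_{f}|_{P_{J}}; u,v,w)$. Expanding each $h^{*}_{\lambda}$ using Definition~\ref{def:poly} and interchanging the order of summation produces a single sum over cells $F \in \MCS$. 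The essential simplification comes from the hypothesis: for any cell $F \in \MCS$ with $F \subset P_{\infty}$, every face $G$ of $F$ is again a cell of $\MCS$ contained in $P_{\infty}$, so $\lambda^{m_{G}} \neq 1$; since $m \nu_{f}(v/m) \in \frac{1}{m_{G}}\ZZ$ for every lattice point $v \in mG \cap \ZZ^{n}$ with $v/m \in G$, the weight $w_{\lambda}(v)$ vanishes identically, which forces $h^{*}_{\lambda}(G, \nu_{f}|_{G}; u) = 0$ and hence $l^{*}_{\lambda}(F, \nu_{f}|_{F}; u) = 0$. These vanishings, together with the standard Katz--Stapledon combinatorial identities, identify the remaining terms with $(-1)^{n-1} w^{n-1} l^{*}_{\lambda}(P, \nu_{f}; u,v)/(uv)$, exactly as in the torus case derivation of Theorem~\ref{EQEEP}.

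The principal obstacle will be the first step: carefully checking that the primitive decomposition and K\"unneth-type vanishing arguments of Theorem~\ref{th:2} transfer to the new compactification in which the coordinate divisors of $\CC^{n}$ are excluded from the boundary of $\ov{Y}$, and verifying that every cone $\tau$ contributing to the nearby cycle perverse sheaves does have its supporting face in $P_{\infty}$, so that the weaker hypothesis $\lambda \notin R_{f}$ --- defined only via faces in $P_{\infty}$ rather than all of $\partial P$ --- is indeed sufficient.
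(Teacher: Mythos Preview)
Your Step 2 --- additivity over the torus stratification $\CC^{n}=\bigsqcup_{I}T^{I}$, applying Theorem~\ref{st:5.7} to each stratum, and using the vanishing of $l^{*}_{\lambda}$ on faces contained in $P_{\infty}$ --- is exactly the paper's proof. The paper observes in addition that for relevant faces $P^{I'}\prec P^{I}$ one has $g([P^{I'},P^{I}];uvw^{2})=1$ (the interval is Boolean), so a simple inclusion--exclusion collapses the double sum to the single term $(-1)^{n-1}w^{n+1}l^{*}_{\lambda}(P,\nu_{f};u,v)$, yielding the formula and the $w$-concentration simultaneously.

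Your Step 1, however, is not part of the paper's proof of this theorem. The paper establishes only the combinatorial identity here; the topological concentration $H^{j}_{c}(Y_{t};\CC)_{\lambda}\simeq 0$ for $j\neq n-1$ and the weight purity (which are what justify the first displayed equality) are deferred to Theorem~\ref{th:5} and Corollary~\ref{Ncc}, and in fact Corollary~\ref{Ncc} \emph{uses} the present theorem as input. Regarding the technical route you propose for Step 1: the compactification you describe --- a smooth refinement $\Sigma$ of $\Sigma_{0}$ containing the full positive-orthant fan, so that $\CC\times\CC^{n}$ sits as a Zariski open in $X_{\Sigma}$ --- generally does \emph{not} exist when $P$ is not convenient, because the coordinate rays of $\RR^{n}_{+}$ need not lie in the recession fan $\Xi_{0}$ of $P$. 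The paper circumvents this by first discarding the strata over non-relevant coordinate faces (passing from $Y$ to $Y^{\circ}\subset B^{*}\times\Omega_{0}$), observing that the subfan $\Sigma_{1}^{\circ}\subset\Sigma_{1}$ of cones dual to relevant faces \emph{is} a subfan of $\Xi_{0}$, and only then compactifying. Your anticipated obstacle is therefore real, and the two-step reduction through $\Omega_{0}$ is the clean fix.
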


\begin{proof}
For a possibly empty subset $I\subset \{1,\dots,n\}$,
we define a subset $T^{I}$ of $X_{0}=\CC^n$ by
\[T^{I}:=\{(x_{1},\dots,x_{n})\in X_{0} \ | \ 
x_{i}=0~(i\notin I), x_{i}\neq 0~(i\in I)\}\simeq(\CS)^{|I|}.\]
Then we have a decomposition $X_{0}= \CC^n = \bigsqcup_{
I\subset\{1,\dots,n\}}T^{I}$ of $X_{0}= \CC^n$. 
We also define a polynomial $f_{I}\in \BK[(x_{i})_{i\in I}]$ 
by substituting $0$ into the variable $x_{i}~(i\notin I)$ of $f$, 
a family of hypersurfaces $Y^{I}$ of $T^{I}$ by 
$Y^{I}:=f^{-1}_{I}(0) 
\subset B^* \times T^I$ 
and a polytope $P^{I}$ in 
$\RR^{I}=\{(x_{1},\dots,x_{n}) \in \RR^n \mid x_{i}=0~(i 
\notin I)\}
\subset \RR^n$ by $P^{I}:=P \cap \RR^{I}$. 
Then by Lemma \ref{new-lemma} we have 
\[
E_{\lambda}(Y_{\infty};u,v,w)= 
\sum_{I\subset \{1,\dots,n\}} E_{\lambda}(Y_{\infty}^{I};u,v,w).
\]
We shall say that a face $Q \prec P$ of $P$ 
is relevant if $Q \not\subset P_{\infty}$. 
If $Q \prec P$ is relevant, then for 
any face $\sigma$ of the first quadrant  
$\RR_+^n$ containing $Q$ 
the face $\sigma \cap P \prec P$ 
of $P$ is also relevant. 
Moreover there exist a possibly empty 
subset $I \subset \{1,\dots, n\}$  
such that $Q=P^I$ and $\dim P^I = 
|I|$. We denote 
by $S$ the set consisting of 
possibly empty subsets $I \subset\{1,\dots, n \}$ 
such that $P^I$ are relevant. 
Then by Theorem \ref{st:5.7} 
for $\lambda\notin R_{f}$ we have 
\[ 
uvw^{2} E_{\lambda}(Y_{\infty};u,v,w)= 
\sum_{I \in S}
(-1)^{|I|-1} 
h^{*}_{\lambda}(P^{I},\nu_{f}|_{P^{I}} ;u,v,w).
\] 
Moreover for each relevant face 
$P^I \prec P$ of $P$ 
by Definition~\ref{def:poly} we have
\begin{align}\label{cieq33}
h^{*}_{\lambda}(P^{I},\nu_{f}|_{P^{I}} ;u,v,w) 
=\sum_{Q \prec P^{I}}
w^{\dim Q+1}l^{*}_{\lambda}(Q, \nu_{f}|_{Q};u,v) 
\cdot g([Q,P^{I}];uvw^2).
\end{align}
If $Q\prec P^{I}$ is not a 
relevant face of $P$, then $Q \subset P_{\infty}$ and 
for $\lambda \notin R_{f}$ 
we have 
$l^{*}_{\lambda}(Q, \nu_{f}|_{Q};u,v) =0$. Note also 
that $l^{*}_{\lambda}(\emptyset, \nu_{f}|_{\emptyset};u,v)=0$ 
for $\lambda\neq1$. 
Moreover for any $I, I' \in S$ such that 
$I'\subset I$ we have $g([P^{I'},P^{I}];uvw^2)=1$.  
Hence for each fixed $I' \in S$ we have 
\[
\sum_{I: I'\subset I}(-1)^{|I|-1} g([P^{I'},P^{I}];uvw^2) 
= \sum_{I: I'\subset I}(-1)^{|I|-1} = 
\left\{
\begin{array}{ll}
(-1)^{n-1}&(I'=\{1,\dots,n\})\\\\ 
0&(otherwise). \\
\end{array}
\right.
\]
We thus obtain 
\begin{align*}
uvw^2E_{\lambda}(Y_{\infty};u,v,w)&=\sum_{I \in S}
(-1)^{|I|-1}{\sum_{I' \in S: I'\subset I}w^{|I'|+1}l^{*}_{
\lambda}(P^{I'},\nu_{f}|_{P^{I'}};u,v)} 
\cdot g([P^{I'},P^{I}];uvw^2) \\
&=\sum_{I' \in S}{w^{|I'|+1}l^{*}_{\lambda}(
P^{I'},\nu_{f}|_{P^{I'}};u,v)
\cdot \Bigl\{ \sum_{I: I'\subset I}(-1)^{|I|-1} 
g([P^{I'},P^{I}];uvw^2) \Bigr\}} \\
&=(-1)^{n-1}w^{n+1}l^{*}_{\lambda}(P,\nu_{f};u,v).
\end{align*}
\end{proof}

We shall say that a face $\sigma \prec \RR_+^n$ of 
the first quadrant $\RR_+^n$ is relevant if 
the condition $(P \setminus P_{\infty}) \cap \sigma 
\not= \emptyset$ is satisfied. It is easy to see 
that if $\sigma \prec \RR_+^n$ is relevant 
then we have $\dim (P \cap \sigma )= \dim \sigma$. 
Let $\Sigma_1$ 
be the fan in $\RR^n$ consisting of all the faces of 
$\RR_+^n$ and regard it 
as the dual fan of the first quadrant $\RR_+^n$. 
Denote by $\Sigma_1^{\circ} \subset \Sigma_1$ its subset 
consisting of the dual cones of 
the relevant faces of $\RR_+^n$. Then we 
can easily see that $\Sigma_1^{\circ}$ is a 
subfan of $\Sigma_1$. Denote by $\Omega_0$ 
the toric variety associated to $\Sigma_1^{\circ}$. 
Then $\Omega_0$ is an open subset of $X_0= \CC^n$ 
and $X_0 \setminus \Omega_0$ is a closed subset 
in it. Moreover for the action of $T_0=( \CC^*)^n$ 
on $X_0= \CC^n$ it is a union of some $T_0$-orbits. 
Set $Y^{\circ}=Y \cap ( \CC^* \times 
\Omega_0) \subset \CC^* \times \Omega_0$ 
and let $\pi^{\circ}: 
\CC^* \times \Omega_0 \to \CC^*$ be the 
projection. 

\begin{theorem}\label{th:5}
Assume that the family $Y$ of hypersurfaces in $X_{0}=\CC^{n}$ is sch\"on. 
Then for any $\lambda\notin R_{f}$ the morphism
\[\psi_{t,\lambda}(j_{!}R\pi_{!}\CC_{Y^{\circ}}) \longrightarrow 
\psi_{t,\lambda}(j_{!}R\pi_{!}\CC_{Y})\]
induced by the one $\CC_{Y^{\circ}} \to \CC_{Y}$ is an isomorphism. 
Moreover for such $\lambda$ the morphism 
\[\psi_{t,\lambda}(j_{!}R (\pi^{\circ})_{!}\CC_{Y^{\circ}}) \longrightarrow 
\psi_{t,\lambda}(j_{!}R( \pi^{\circ})_{*}\CC_{Y^{\circ}})\]
induced by the one $R (\pi^{\circ})_{!}\CC_{Y^{\circ}}\to 
R ( \pi^{\circ})_{*}\CC_{Y^{\circ}}$ is an isomorphism.
\end{theorem}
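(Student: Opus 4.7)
The plan is to split the theorem into its two assertions and handle them by separate arguments: for the first isomorphism, a stratification-plus-vanishing reduction to the torus case of Section \ref{sec:3}; for the second, a direct adaptation of the proof of Theorem \ref{th:2} to the setting of $Y^{\circ}\subset \CC^{*}\times\Omega_{0}$ via a suitable toric compactification.

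For the first isomorphism, I would begin by writing down the open--closed triangle coming from $Y^{\circ}\hookrightarrow Y$ and stratifying the complement as $Y\setminus Y^{\circ}=\bigsqcup_{I}Y^{I}$, indexed by the non-relevant subsets $I\subset\{1,\dots,n\}$. This reduces the claim to the vanishing of $\psi_{t,\lambda}(j_{!}R\pi_{!}\CC_{Y^{I}})$ for every non-relevant $I$ and every $\lambda\notin R_{f}$. The point is that non-relevance of $I$ means $P^{I}\subset P_{\infty}$, so every cell $F$ of the induced subdivision of $P^{I}$ satisfies $F\subset P_{\infty}$ and hence $\lambda^{m_{F}}\neq 1$, forcing $l^{*}_{\lambda}(F,\nu_{f}|_{F};u,v)=0$ by definition of the weighted polynomials. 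Applying Theorem \ref{EQEEP} to the sch\"on torus family $Y^{I}\subset B(0;\e)^{*}\times T^{I}$ then yields $E_{\lambda}(Y^{I}_{t};u,v)=0$; combined with the concentration in middle degree from Proposition \ref{Gysin} and the purity of $H^{|I|-1}_{c}(Y^{I}_{t};\CC)_{\lambda}$ from Corollary \ref{CoEQEEP}, this upgrades to the honest vanishing of $H^{j}_{c}(Y^{I}_{t};\CC)_{\lambda}$ for every $j$.

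For the second isomorphism, the plan is to mimic the proof of Theorem \ref{th:2}. I would choose a smooth subdivision $\Sigma$ of the dual fan $\Sigma_{0}$ of $\UH_{f}$ in $\RR^{n}\times\RR^{1}_{+}$ whose restriction $\Xi\subset\Sigma$ to $\RR^{n}\times\{0\}$ is compatible with $\Sigma_{1}^{\circ}$, let $X_{\Sigma}$ be the resulting smooth toric variety with projection $\pi_{\Sigma}:X_{\Sigma}\to\CC$, and then pass to the smooth open subvariety $X_{\Sigma}^{\circ}\subset X_{\Sigma}$ obtained by removing the closures of the torus orbits indexed by cones of $\Xi$ lying outside $\Sigma_{1}^{\circ}$. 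By construction $X_{\Sigma}^{\circ}\supset\CC^{*}\times\Omega_{0}$ and $\pi_{\Sigma}^{-1}(0)\subset X_{\Sigma}^{\circ}$, so $\pi_{\Sigma}|_{X_{\Sigma}^{\circ}}$ is proper over a neighbourhood of $0$. Sch\"onness of $Y$ guarantees that the closure $\overline{Y^{\circ}}\subset X_{\Sigma}^{\circ}$ meets all remaining torus orbits transversally near $\pi_{\Sigma}^{-1}(0)$, so $D=\overline{Y^{\circ}}\setminus Y^{\circ}$ is a normal crossing divisor there. Applying $\psi_{t}\circ R(\pi_{\overline{Y^{\circ}}})_{*}$ to the triangle
\[
(j_{Y^{\circ}})_{!}\CC_{Y^{\circ}}\to R(j_{Y^{\circ}})_{*}\CC_{Y^{\circ}}\to (i_{D})_{*}(i_{D})^{-1}R(j_{Y^{\circ}})_{*}\CC_{Y^{\circ}}\xrightarrow{+1}
\]
reduces the isomorphism to the vanishing of the $\lambda$-part of the nearby cycles of the third term. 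Stratifying $D$ by torus orbits and invoking Morihiko Saito's primitive decomposition of the graded pieces of the relevant nearby-cycle mixed Hodge modules further reduces this to showing $R\Gamma_{c}(\overline{Y^{\circ}}\cap T_{\tau};\SL_{\tau})\simeq 0$ for each contributing cone $\tau\in\Sigma\setminus\Xi$ and rank-one local system $\SL_{\tau}$ prescribed as in the proof of Theorem \ref{th:2}.

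The main obstacle, to which I would devote the most care, is to verify that once we have passed to $X_{\Sigma}^{\circ}$ the cones $\tau$ surviving in the primitive decomposition all correspond to compact faces $\widetilde{F}\prec\UH_{f}$ whose projection $F=p(\widetilde{F})\in\MCS$ is contained in $P_{\infty}$, rather than merely in the larger set $\partial P$ used in the torus case. Once this is established, the hypothesis $\lambda\notin R_{f}$ (precisely $\lambda^{m_{F}}\neq 1$ for every $F\subset P_{\infty}$) supplies the smooth cone $\tau'\succ\tau$ with $\lambda^{d_{\tau'}}\neq 1$, the extra ray $\rho\prec\tau'$, and the primitive generator $\beta\in\rho$ with $\lambda^{-q(\beta)}\neq 1$, so that the product decomposition $\overline{Y^{\circ}}\cap T_{\tau}\simeq Z\times(\CC^{*})^{k}$ combined with K\"unneth finishes the argument exactly as in Theorem \ref{th:2}. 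This combinatorial-toric bookkeeping is what makes the weaker hypothesis $\lambda\notin R_{f}$ suffice here and is the genuinely new input beyond the torus case.
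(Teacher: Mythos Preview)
Your treatment of the first assertion is correct and matches the paper's argument: stratify $Y\setminus Y^{\circ}$ into the torus pieces $Y^{I}$ with $I$ non-relevant, note that non-relevance forces $P^{I}\subset P_{\infty}$, and then combine the middle-degree concentration of Proposition~\ref{Gysin} with the vanishing of $E_{\lambda}(Y^{I}_{t};u,v)$ coming from Theorem~\ref{EQEEP} (all cells of $P^{I}$ lie in $P_{\infty}$, so every $l^{*}_{\lambda}$ vanishes). Invoking purity from Corollary~\ref{CoEQEEP} is harmless but unnecessary here.

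For the second assertion your overall strategy is right, and you correctly single out the genuine new point: after restricting to $Y^{\circ}$ the horizontal boundary strata are indexed by cones $\sigma\in\Xi\setminus\Sigma_{1}^{\circ}$ rather than all of $\Xi\setminus\{0\}$, and one must check that the associated cells $F\in\mathcal{S}$ land in $P_{\infty}$ (not merely in $\partial P$), so that the Section~\ref{sec:4} definition of $R_{f}$ suffices. However, your passage to the open subvariety $X_{\Sigma}^{\circ}$ is problematic. The claim $\pi_{\Sigma}^{-1}(0)\subset X_{\Sigma}^{\circ}$ is false: for any $\sigma\in\Xi\setminus\Sigma_{1}^{\circ}$ the closure $\overline{T_{\sigma}}$ contains orbits $T_{\tau}$ with $\tau\succ\sigma$ and $\tau\notin\Xi$, and these lie in $\pi_{\Sigma}^{-1}(0)$. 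Consequently $\pi_{\Sigma}|_{X_{\Sigma}^{\circ}}$ is not proper near $0$, and you lose the identification of $\psi_{t}(j_{!}R(\pi^{\circ})_{*}\CC_{Y^{\circ}})$ with $\psi_{t}(R(\pi_{\overline{Y^{\circ}}})_{*}R(j_{Y^{\circ}})_{*}\CC_{Y^{\circ}})$ that the whole argument rests on.

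The paper avoids this by working with the full $X_{\Sigma}$, which is proper over $\CC$ because $|\Sigma|=\RR^{n}\times\RR_{+}$. The only input needed is that $\Sigma_{1}^{\circ}$ is already a subfan of the recession fan $\Xi_{0}$ of $\UH_{f}$, so one can choose the smooth subdivision $\Sigma$ of $\Sigma_{0}$ with $\Sigma_{1}^{\circ}\subset\Sigma$; then $\CC^{*}\times\Omega_{0}$ is an open toric subvariety of $X_{\Sigma}$, the closure $\overline{Y^{\circ}}$ is smooth with normal crossing boundary near $\pi_{\Sigma}^{-1}(0)$, and the proof of Theorem~\ref{th:2} runs verbatim with the sole modification that $D'$ is stratified by $\sigma\in\Xi\setminus\Sigma_{1}^{\circ}$. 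Your ``main obstacle'' is then exactly the verification needed, and once you drop the unnecessary excision to $X_{\Sigma}^{\circ}$ your outline is complete.
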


\begin{proof}
The proof is similar to that of Theorem~\ref{th:2}. 
By decomposing 
the closed subset $X_0 \setminus \Omega_0$ 
into tori and applying Proposition \ref{Gysin} 
and Theorem \ref{EQEEP} to each of them 
(see Lemma \ref{smooth}), for 
$\lambda\notin R_{f}$ 
we obtain the vanishing  
\[\psi_{t,\lambda}(j_{!}R\pi_{!}\CC_{Y \setminus 
Y^{\circ}}) \simeq 0 \]
from which the first assertion follows. 
Let $\Xi_0$ be the subfan of the dual fan $\Sigma_0$ 
in $\RR^n \simeq \RR^{n} 
\times \{0\}$ consisting of the 
cones $\sigma \in \Sigma_0$
contained in $\RR^{n}\times \{0\}\subset \RR^{n+1}$. 
Then $\Xi_0$ is the dual fan of the $n$-dimensional 
polytope $P \subset \RR^n$. Moreover by the 
definition of $\Sigma_1^{\circ}$ we can easily 
see that $\Sigma_1^{\circ}$ is a subfan of 
$\Xi_0$. By this property we 
can construct a smooth subdivision $\Sigma$ of 
$\Sigma_0$ such that $\Sigma_1^{\circ} 
\subset \Sigma$. Then the toric variety 
$X_{\Sigma}$ associated to $\Sigma$ 
is a smooth variety containing 
$\CC^* \times \Omega_0$ and the second 
assertion can be proved as in the proof 
of Theorem~\ref{th:2}. 
\end{proof}

For $\lambda\in \CC$ let
\[H^{j}_{c}(Y_{t};\CC)_{\lambda} \subset H^{j}_{c}(Y_{t};\CC)\] 
be the generalized eigenspace of 
$\Phi_{j}$ for the eigenvalue $\lambda$. 
Similarly for $Y^{\circ} \subset Y$ 
we define linear subspaces 
\[H^{j}_{c}(Y^{\circ}_{t};\CC)_{\lambda} \subset 
H^{j}_{c}(Y^{\circ}_{t};\CC) 
\qquad ( \lambda \in \CC ).\] 
Then by Theorem \ref{th:5} for any $\lambda\notin R_{f}$ 
there exist isomorphisms  
\[H^{j}_{c}(Y^{\circ}_{t};\CC)_{\lambda} 
\simeq H^{j}_{c}(Y_{t};\CC)_{\lambda} \qquad (j \in \ZZ). \]

\begin{corollary}\label{Ncc}
Assume that the family $Y$ of hypersurfaces 
in $X_{0}=\CC^{n}$ is sch\"on. 
Then for any $\lambda\notin R_{f}$ 
and $t\in \CS$ such that $0<|t|\ll1$ we have the 
concentration 
\[H^{j}_{c}(Y_{t};\CC)_{\lambda} 
\simeq 0 \qquad (j \not= n-1)\] 
and the filtration on the only non-trivial 
cohomology group $H^{n-1}_c(Y_{t};\CC)_{\lambda}$ 
induced by Deligne's weight filtration on 
$H^{n-1}_c(Y_{t};\CC)$ is concentrated in 
degree $n-1$. 
\end{corollary}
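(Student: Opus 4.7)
The plan is to combine the two isomorphisms provided by Theorem \ref{th:5} with classical affine vanishing and Poincar\'e duality to establish the topological concentration, and then to extract the weight concentration by comparing with Theorem \ref{th:syucyuu}. First, for $\lambda \notin R_{f}$, Theorem \ref{th:5} yields the chain
\[ H^{j}_{c}(Y_{t};\CC)_{\lambda} \simeq H^{j}_{c}(Y^{\circ}_{t};\CC)_{\lambda} \simeq H^{j}(Y^{\circ}_{t};\CC)_{\lambda} \qquad (j \in \ZZ), \]
reducing the problem to a cohomological statement about $Y^{\circ}_{t}$. Since $Y_{t} \subset \CC^{n}$ is smooth affine of complex dimension $n-1$, Andreotti--Frankel vanishing gives $H^{j}(Y_{t};\CC) = 0$ for $j > n-1$, and Poincar\'e duality then gives $H^{j}_{c}(Y_{t};\CC) = 0$ for $j < n-1$. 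Transferring along the chain above yields $H^{j}_{c}(Y^{\circ}_{t};\CC)_{\lambda} \simeq H^{j}(Y^{\circ}_{t};\CC)_{\lambda} \simeq 0$ for $j < n-1$.

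Next I would apply Poincar\'e duality on the smooth variety $Y^{\circ}_{t}$ of complex dimension $n-1$, monodromy-equivariantly: it identifies $H^{j}_{c}(Y^{\circ}_{t};\CC) \simeq H^{2(n-1)-j}(Y^{\circ}_{t};\CC)^{*}$, matching the $\lambda$-generalized eigenspace on the left with the $\lambda$- or $\lambda^{-1}$-eigenspace on the right according to convention. Since $R_{f}$ consists of roots of unity and is therefore closed under $\lambda \mapsto \lambda^{-1}$, this gives $H^{j}_{c}(Y^{\circ}_{t};\CC)_{\lambda} \simeq 0$ for $j > n-1$; transferring back through Theorem \ref{th:5} yields $H^{j}_{c}(Y_{t};\CC)_{\lambda} \simeq 0$ for $j > n-1$. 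Combined with the previous step, this establishes the topological concentration $H^{j}_{c}(Y_{t};\CC)_{\lambda} \simeq 0$ for every $j \neq n-1$.

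For the weight concentration I would invoke Theorem \ref{th:syucyuu}, which asserts $E_{\lambda}(Y_{t}; u, v, w) = (-1)^{n-1} w^{n-1} \sum_{p,q} h^{p,q}(H^{n-1}_{c}(Y_{t};\CC)_{\lambda}) u^{p} v^{q}$. On the other hand, the definition of the equivariant refined limit mixed Hodge polynomial, together with the topological concentration just proved, also gives
\[ E_{\lambda}(Y_{t}; u, v, w) = (-1)^{n-1} \sum_{p,q,r} h^{p,q}\bigl(Gr^{W}_{r} H^{n-1}_{c}(Y_{t};\CC)_{\lambda}\bigr) u^{p} v^{q} w^{r}. \]
Since the Hodge numbers are non-negative, equating these two expressions monomial-by-monomial in $w$ forces $Gr^{W}_{r} H^{n-1}_{c}(Y_{t};\CC)_{\lambda} = 0$ for every $r \neq n-1$, which is the second assertion. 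The main obstacle I foresee is verifying that the Poincar\'e duality step respects the monodromy action in the required way on generalized eigenspaces; but closure of $R_{f}$ under inversion makes the argument insensitive to whether PD preserves or inverts $\lambda$, and so the plan closes without further concern for conventions.
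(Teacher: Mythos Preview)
Your overall strategy differs from the paper's: you first establish the topological concentration by elementary means (affine vanishing and Poincar\'e duality) and then read off the weight concentration from Theorem~\ref{th:syucyuu}; the paper does the reverse, first invoking Sabbah \cite[Theorem 13.1]{Sabbah-2} to deduce that the isomorphism $H^{j}_{c}(Y^{\circ}_{t};\CC)_{\lambda}\simeq H^{j}(Y^{\circ}_{t};\CC)_{\lambda}$ forces Deligne's weight filtration on each $H^{j}_{c}(Y^{\circ}_{t};\CC)_{\lambda}$ to be pure of weight $j$, and only then combining this with the $w^{n-1}$-concentration of Theorem~\ref{th:syucyuu} to conclude that all $H^{j}_{c}$ with $j\neq n-1$ vanish. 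Your route has the virtue of avoiding the appeal to Sabbah for the weight statement, since once the topological concentration is known your monomial-by-monomial comparison with Theorem~\ref{th:syucyuu} is immediate and correct.

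However, there is a genuine gap in your topological step. You assert that $Y_{t}\subset\CC^{n}$ is smooth, but the sch\"onness hypothesis does \emph{not} guarantee this when $P$ is not convenient; indeed this is precisely why Proposition~\ref{pro:1} carries the convenience assumption while Corollary~\ref{Ncc} does not. For instance, with $n=2$ and $f(t,x)=x_{1}^{2}+x_{1}x_{2}-t\,x_{2}^{2}$ one checks that the family is sch\"on (the Newton polytope has $\dim P=2$ and all initial hypersurfaces $V_{F}$ are smooth), yet $Y_{t}$ is a pair of lines through the origin and is singular there for every $t\neq 0$. Thus your invocation of Andreotti--Frankel plus Poincar\'e duality on $Y_{t}$ is unjustified. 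The conclusion $H^{j}_{c}(Y_{t};\CC)=0$ for $j<n-1$ happens to survive (since $\CC_{Y_{t}}[n-1]$ is perverse for a hypersurface and Artin vanishing applies), but your stated reasoning does not, and smoothness of $Y^{\circ}_{t}$ in the second Poincar\'e step also requires justification that you have not supplied. The paper's use of Sabbah's theorem sidesteps all of these smoothness issues entirely.
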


\begin{proof}
For $\lambda\in \CC$ and $j \in \ZZ$ let
\[H^{j}(Y^{\circ}_{t};\CC)_{\lambda} 
\subset H^{j}(Y^{\circ}_{t};\CC)\] 
be the generalized eigenspace of the monodromy 
$H^{j}(Y^{\circ}_{t};\CC)\xrightarrow{\sim}
H^{j}(Y^{\circ}_{t};\CC)$. Then by Theorem \ref{th:5} 
for any $\lambda\notin R_{f}$ 
and $t\in \CS$ such that $0<|t|\ll1$ we have 
isomorphisms 
\[H^{j}_{c}(Y^{\circ}_{t};\CC)_{\lambda} \simeq 
H^{j}(Y^{\circ}_{t};\CC)_{\lambda} \qquad (j \in \ZZ ).
\]
In the same way as in the proof 
of Corollary~\ref{CoEQEEP},
we see that the filtration on 
$H^{j}_{c}(Y^{\circ}_{t};\CC)_{\lambda}$ 
induced by Deligne's weight filtration of 
$H^{j}_{c}(Y^{\circ}_{t};\CC)$ is concentrated in degree $j$.
Then, the assertion follows immediately from Theorem~\ref{th:syucyuu}.
\end{proof}

\begin{remark}\label{imrem} 
By the proofs of Theorems \ref{th:2} and \ref{th:5}, 
if the family $Y$ is sch\"on 
we can also show that for any $\lambda\notin R_{f}$ 
and $t\in \CS$ such that $0<|t|\ll1$ 
there exist isomorphisms 
\[H^{j}(Y_{t};\CC)_{\lambda} \simeq 
H^{j}(Y^{\circ}_{t};\CC)_{\lambda} \qquad (j \in \ZZ ).
\]
Indeed, for $Z= \CC^* \times (X_0 \setminus \Omega_0) \subset X$ 
it suffices to show the vanishing 
\[\psi_{t,\lambda}(j_{!}R\pi_{*} R \Gamma_Z( \CC_{Y})) \simeq 0. \]
With the help of Lemma \ref{smooth} we can show it 
by decomposing $Z$ into tori as in the proof of Theorem \ref{th:5}. 
\end{remark}

\begin{remark}
Note that $Y_{t} \subset \CC^n$ may not be smooth in our situation.
Therefore, we can not deduce the concentration 
$H^{j}_{c}(Y_{t};\CC)\simeq 0$ $(j\neq n-1)$ in a 
similar way as in the proof of Corollary~\ref{CoEQEEP}. 
Moreover $Y_t^{\circ}$ may not be affine. For these reasons, 
we relied on Theorem~\ref{th:syucyuu} in the proof of Corollary~\ref{Ncc}.
\end{remark}

We can prove the 
following formula for the 
multiplicities of the eigenvalues 
$\lambda\notin R_{f}$ in the monodromy 
$\Phi_{n-1}$ by calculating 
monodromy zeta functions as in 
\cite{M-T-2}. For a cell $F \in \mathcal{S}$ 
denote by $Q_F \prec P$ the unique 
face of $P$ such that $\relint F \subset \relint Q_F$. 

\begin{theorem}\label{affinemulti}
Assume that the family $Y$ of hypersurfaces 
in $X_{0}=\CC^n$ is sch\"on. Then 
for $\lambda\notin R_{f}$ 
the multiplicity of the factor $t- \lambda$ in 
the characteristic polynomial of the monodromy 
\[\Phi_{n-1}:H^{n-1}_{c}(Y_{t};\CC)
\xrightarrow{\sim}H^{n-1}_{c}(Y_{t};\CC) 
\qquad (0<|t|\ll1)\]
is equal to that in 
\[ 
\prod_{F \not\subset P_{\infty}, \ \dim F= \dim Q_F} 
(t^{m_F}-1)^{(-1)^{n- \dim F} \Vol_{\ZZ}( \widetilde{F} )}, 
\]
where $\Vol_{\ZZ}( \widetilde{F}) \in \ZZ_{>0}$ 
is the normalized volume of $\widetilde{F}$ 
with respect to the lattice $\Aff ( \widetilde{F} ) 
\cap \ZZ^{n+1} \simeq \ZZ^{\dim F}$ in 
$\Aff ( \widetilde{F} ) \simeq \RR^{\dim F}$. 
\end{theorem}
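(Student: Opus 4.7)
The plan is to reduce to the torus case (Theorem \ref{torusmulti}) by stratifying $Y_{t}$ into torus orbits and exploiting additivity of monodromy zeta functions. Set
\[
\zeta_{Y_{t}}(s) := \prod_{j \in \ZZ}\det\bigl(s \cdot \id - \Phi_{j}\bigr)^{(-1)^{j}}.
\]
By Corollary \ref{Ncc}, for any $\lambda \notin R_{f}$ one has $H^{j}_{c}(Y_{t};\CC)_{\lambda}=0$ for $j \not= n-1$, so the multiplicity of $(s-\lambda)$ in the characteristic polynomial of $\Phi_{n-1}$ equals $(-1)^{n-1}$ times the signed multiplicity of $(s-\lambda)$ in $\zeta_{Y_{t}}(s)$. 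It therefore suffices to compute the contribution of such $\lambda$ to $\zeta_{Y_{t}}(s)$.

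For each $I \subset \{1,\dots,n\}$ let $T^{I} \simeq (\CC^{*})^{|I|}$ and $Y^{I} = Y \cap (B(0;\e)^{*} \times T^{I})$ as in the proof of Theorem \ref{th:syucyuu}; each $Y^{I}$ is a sch\"on family of hypersurfaces of $T^{I}$ given by the Laurent polynomial $f_{I}$ with Newton polytope $P^{I} = P \cap \RR^{I}$. Since the stratification of $X_{0}=\CC^{n}$ into the tori $T^{I}$ depends only on the $x$-coordinates and is therefore preserved by the $\CC^{*}$-action on the $t$-coordinate, additivity in the Grothendieck ring of mixed Hodge modules (cf.\ Lemma \ref{new-lemma}) combined with exactness of the nearby cycle functor yields
\[
\zeta_{Y_{t}}(s) = \prod_{I \subset \{1,\dots,n\}} \zeta_{Y_{t}^{I}}(s).
\]
Indices $I$ with $\dim P^{I} < |I|$ produce a free torus factor on $Y_{t}^{I}$, so $\zeta_{Y_{t}^{I}}(s)=1$; we may restrict to the relevant indices with $\dim P^{I}=|I|$. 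For such $I$ we have $R_{f_{I}} \subset R_{f}$, hence $\lambda \notin R_{f_{I}}$, and Theorem \ref{torusmulti} applied to $Y^{I}$ (together with the corresponding torus concentration from Theorem \ref{EQEEP}) shows that the multiplicity of $(s-\lambda)$ in $\zeta_{Y_{t}^{I}}(s)$ equals $(-1)^{|I|-1}$ times its multiplicity in
\[
\prod_{\substack{F \in \mathcal{S},\ \relint F \subset \Int P^{I}\\ \dim F = |I|}} \bigl(s^{m_{F}}-1\bigr)^{\Vol_{\ZZ}(\widetilde{F})}.
\]

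A cell $F \in \mathcal{S}$ arises in the index set of a relevant $I$ exactly when its smallest containing face $Q_{F} \prec P$ has the form $Q_{F}=P^{I}$ with $\dim P^{I}=|I|=\dim F$; conversely the condition $F \not\subset P_{\infty}$ forces $Q_{F}$ to lie in some coordinate subspace $\RR^{I}$, and the condition $\dim F = \dim Q_{F}$ then gives $\dim P^{I}=|I|$. This matches the indexing set in the statement. Combining the sign $(-1)^{n-1}$ from Corollary \ref{Ncc} with the sign $(-1)^{|I|-1}=(-1)^{\dim F -1}$ produces the overall exponent $(-1)^{n-\dim F}$, which is exactly the formula to be proved.

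The main obstacle is the rigorous justification of the multiplicativity of the monodromy zeta function under the stratification $Y_{t}=\bigsqcup_{I} Y_{t}^{I}$: this amounts to showing that the long exact sequences from excision of each $Y_{t}^{I}$ commute with the nearby cycle functor and split into generalized eigenspaces of the monodromy, which follows from the exactness of $\psi_{t}$ on mixed Hodge modules and the compatibility of the $T_{0}$-stratification with the projection to the $t$-line, exactly as in the proof of Theorem \ref{th:syucyuu}. The remaining combinatorial bookkeeping is completely parallel to the torus case treated via the method of \cite{M-T-2}.
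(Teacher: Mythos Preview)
Your approach is correct and is essentially the one the paper indicates: use the concentration from Corollary~\ref{Ncc} to reduce to a monodromy zeta function computation, then evaluate the zeta function additively over the torus stratification of $\CC^n$ (the paper phrases this last step as ``calculating monodromy zeta functions as in \cite{M-T-2}'', which amounts to the same thing once one has Theorem~\ref{torusmulti} in hand).

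One correction: the inclusion $R_{f_I}\subset R_f$ that you assert is \emph{not} true in general, because cells $F$ lying on the coordinate-hyperplane part of $\partial P^I$ (i.e.\ in some $P^{I'}$ with $I'\subsetneq I$) need not lie in $P_\infty$, so their $m_F$ contribute to $R_{f_I}$ but not to $R_f$. Fortunately you do not need this inclusion: the torus concentration you invoke is already provided by Proposition~\ref{Gysin} for every $\lambda\neq 1$, and $1\in R_f$ always. Replace the appeal to Theorem~\ref{EQEEP} by Proposition~\ref{Gysin} and drop the claim about $R_{f_I}$; the rest of the argument goes through unchanged.
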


Moreover by Theorems \ref{th:syucyuu} and \ref{th:5} 
and Corollary \ref{Ncc} 
we can easily obtain results 
similar to the ones in Corollary 
\ref{th:3}, Proposition \ref{th:4} and Theorem \ref{torusjordan}. 
In particular as in Theorem \ref{torusjordan},   
by Corollary \ref{Ncc} and Theorem \ref{th:syucyuu} we can 
describe the numbers $J_{\lambda,m}$ of the 
Jordan blocks in the middle-dimensional monodromy 
\[\Phi_{n-1}:H^{n-1}_{c}(Y_{t};\CC)\xrightarrow{\sim}
H^{n-1}_{c}(Y_{t};\CC) \qquad (0<|t|\ll 1)\]
for the eigenvalues $\lambda\notin R_{f}$ 
with size $m\geq 0$ in terms of ${\rm UH}_{f}$. 
See Theorem \ref{NJBTH}. 

\begin{remark}\label{StaCom}
One says that $P$ is convenient if $\dim{P\cap H}=
\dim{H}$ for any coordinate subspace $H$ of $\RR^n$.
In Corollary~6.3 of \cite{Stapledon},
the author assumed that $P$ is convenient 
and the function $\nu_{f}$ is constant on $P_{\infty}$.
Under these assumptions, we have $R_{f}=\{1\}$.
Therefore, we can describe the Jordan normal 
form for the eigenvalues $\lambda\neq 1$ in terms 
of the polynomials associated to $\UH_{f}$.
\end{remark}

\section{Monodromies and limit mixed Hodge 
structures of families of complete 
intersection varieties}\label{sec:5}

In this section, we extend our previous results to 
families complete intersection subvarieties in $(\CS)^{n}$ or $\CC^n$.
Throughout this section, for $1\leq k \leq n$ 
let $f_{i}(t,x)~(1\leq i \leq k)$ be Laurent polynomials
$f_{i}(t,x)=\sum_{v\in\ZZ^n}a_{i,v}(t)x^{v}\in
\BK[x^{\pm}_{1},\dots,x^{\pm}_{n}]$
or polynomials $f_{i}(t,x)=\sum_{v\in\in\ZZ^{n}_{+}}
a_{i,v}(t)x^{v}\in \BK[x_{1},\dots,x_{n}]$
over the field $\BK=\CC(t)$.
Then the subvariety $f^{-1}_{1}(0)\cap\dots\cap 
f^{-1}_{k}(0)$ in $T=\CS_{t}\times (\CS)^{n}_{x}$
or $X=\CS_{t}\times \CC^{n}_{x}$ defines a family $Y$ 
of subvarieties of $T_{0}=(\CS)^{n}_{x}$ 
or $X_{0}=\CC^{n}$ over a small punctured disk
$B(0; \e )^* \subset \CC$ ($0<\varepsilon\ll1 $). 
We shall describe its monodromy and limit mixed Hodge structure.
As in Section \ref{sec:3} 
we define $\UH_{f_{i}}\subset\RR^{n+1}=\RR^n\times\RR_{s}$ and their 
second projections $P_{i}=p(\UH_{f_{i}})\subset \RR^{n}$. 
Set $f=(f_{1},\dots,f_{k})$ and let
\[\UH_{f}:=\UH_{f_{1}}+\dots+\UH_{f_{k}}\subset\RR^{n+1}\]
be the Minkowski sum of $\UH_{f_{1}},\dots,\UH_{f_{k}}$.
Set $P=p(\UH_{f})=P_{1}+\dots+ P_{k}\subset\RR^{n}$. 
Throughout this section we assume that $\dim P=n$. 
By using $\UH_{f}$, we define a function $\nu_{f}:P\to\RR$, a subdivision 
$\mathcal{S}$ of $P$ into lattice polytopes 
and a closed subset 
$P_{\infty} \subset P$ as in Sections \ref{sec:3} and \ref{sec:4}. 
Moreover for each cell $F\in \mathcal{S}$ 
and $1\leq i\leq k$ the initial 
Laurent polynomial 
$I^{F}_{f_{i}}(x)\in\CC[x^{\pm}_{1},\dots,x^{\pm}_{n}]$
of $f_{i}$ with respect to $F$ is defined.

\begin{definition}[{Stapledon~\cite{Stapledon}}]\label{cischon}
We say that the family $Y=f^{-1}_{1}(0)\cap\dots\cap f^{-1}_{k}(0)$ of 
subvarieties of $T_{0}=(\CS)^{n}$ or $X_{0}=\CC^n$ is sch\"{o}n
if for any $J\subset \{1,\dots,k\}$ and any cell $F\in \mathcal{S}$ the 
subvariety $V_{F}=\bigcap_{j\in J}\{I^{F}_{j}=0\}
\subset T_{F}$ of $T_{F}\simeq (\CS)^{\dim{F}}$
is a non-degenerate complete intersection (see \cite{Oka}).
\end{definition}

It follows easily from the proof of 
Theorems~\ref{RCS} and \ref{th:2} that if the 
family $Y$ in $T_{0}=(\CS)^{n}$ is sch\"{o}n its generic fiber 
$Y_{t}=Y\cap\pi^{-1}(t) \subset T_0 \ (0<|t|\ll1)$ 
is a smooth complete intersection. 
Moreover by Danilov-Khovanskii~\cite[Theorem~6.4]{D-K} 
we obtain the following results. 
For $\lambda\in \CC$ and $j \in \ZZ$ let
\[H^{j}_{c}(Y_{t};\CC)_{\lambda} \subset H^{j}_{c}(Y_{t};\CC)\] 
be the generalized eigenspace of the monodromy automorphism 
\[\Phi_{j}:H^{j}_{c}(Y_{t};\CC)\xrightarrow{\sim}
H^{j}_{c}(Y_{t};\CC) \] 
for the eigenvalue $\lambda$. 

\begin{proposition}
Let $Y=f^{-1}_{1}(0)\cap\dots\cap f^{-1}_{k}(0)$ be a 
family of subvarieties in 
$T_{0}=(\CS)^{n}~({\rm resp}. ~X_{0}=\CC^n)$.
Assume that $Y$ is sch\"{o}n and $\dim P_i=n$ 
(resp. $P_i$ is convenient) for any $1 \leq i \leq k$. 
Then for $t\in\CS$ such that $0<|t|\ll1$ we have
\[H^{j}_{c}(Y_{t};\CC)\simeq 0 \qquad (j<n-k)\]
and the Gysin map
\begin{align}
H^{j}_{c}(Y_{t};\CC) \longrightarrow  H^{j+2k}_{c}(T_{0};\CC)
\\ ({\rm resp.} ~H^{j}_{c}(Y_{t};\CC) \longrightarrow  
H^{j+2k}_{c}(X_{0};\CC))
\end{align}
associated to the inclusion map $Y_{t}\hookrightarrow T_{0}~ 
({\rm resp.}~Y_{t}\hookrightarrow X_{0})$
is an isomorphism for $j>n-k$ and surjective for $j=n-k$.
Moreover the monodromy $\Phi_{j}\colon H^{j}_{c}(Y_{t};\CC)
\xrightarrow{\sim}H^{j}_{c}(Y_{t};\CC)$
is identity for $j>n-k$. 
In particular, 
for any $\lambda \not= 1$ and $t\in\CS$ such that $0<|t|\ll1$ 
we have the concentration 
\[H^{j}_{c}(Y_{t};\CC)_{\lambda} \simeq 0  
\qquad (j \not= n-k).\]
\end{proposition}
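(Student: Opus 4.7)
The plan is to reduce to the hypersurface cases already proved in Proposition~\ref{Gysin} and Proposition~\ref{pro:1} by induction on the codimension $k$. Write $Y^{(i)} = f_{1}^{-1}(0) \cap \cdots \cap f_{i}^{-1}(0)$ for $0 \leq i \leq k$, so that $Y^{(0)}$ is the total space $\CS \times T_{0}$ (resp. $\CS \times X_{0}$), $Y^{(k)} = Y$, and each fiber $Y^{(i)}_{t}$ is a complete intersection of pure dimension $n-i$. The sch\"onness hypothesis in Definition~\ref{cischon}, applied to all subsets $J \subset \{1,\dots,k\}$, simultaneously forces each $Y^{(i)}_{t}$ to be smooth and forces $Y^{(i+1)}_{t}$ to cut $Y^{(i)}_{t}$ as a Newton non-degenerate hypersurface.

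First I would obtain the vanishing $H^{j}_{c}(Y_{t};\CC) \simeq 0$ for $j < n-k$ from the generalized Poincar\'e duality
\[
H^{j}_{c}(Y_{t};\CC) \simeq H^{2(n-k)-j}(Y_{t};\CC)^{*}
\]
combined with the Artin-type vanishing on the smooth affine variety $Y_{t}$ of dimension $n-k$. Next I would establish the Gysin statement by proving the intermediate claim that for each $1 \leq i \leq k$ the Gysin map
\[
H^{j}_{c}(Y^{(i)}_{t};\CC) \longrightarrow H^{j+2}_{c}(Y^{(i-1)}_{t};\CC)
\]
is an isomorphism for $j > n-i$ and is surjective for $j = n-i$. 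This is the weak Lefschetz theorem of Danilov-Khovanskii (as invoked in Proposition~\ref{Gysin} and Proposition~\ref{pro:1}, and in complete-intersection form in \cite[Theorem~6.4]{D-K}) applied at each stage to the non-degenerate hypersurface $Y^{(i)}_{t}$ inside the smooth ambient $Y^{(i-1)}_{t}$. Composing these $k$ Gysin maps gives the required global isomorphism onto $H^{j+2k}_{c}(T_{0};\CC)$ or $H^{j+2k}_{c}(X_{0};\CC)$.

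For the monodromy assertion, note that $T_{0}$ and $X_{0}$ are independent of $t$, so the monodromy on $H^{j+2k}_{c}(T_{0};\CC)$ (resp. $H^{j+2k}_{c}(X_{0};\CC)$) is the identity. The Gysin maps constructed above come from morphisms of nearby cycle sheaves (namely $R\pi_{!}\CC_{Y^{(i)}} \to R\pi_{!}\CC_{Y^{(i-1)}}[2]$, using the smoothness of the inclusion away from the zero fiber), hence are $\Phi$-equivariant. Consequently $\Phi_{j}$ is the identity on $H^{j}_{c}(Y_{t};\CC)$ for $j > n-k$, and combined with the previous vanishing this immediately yields the concentration $H^{j}_{c}(Y_{t};\CC)_{\lambda} \simeq 0$ for $\lambda \neq 1$ and $j \neq n-k$.

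The main obstacle is justifying the inductive weak Lefschetz at each step: one must verify that $\{f_{i+1}=0\}$ meets $Y^{(i)}_{t}$ as a Newton non-degenerate hypersurface in a suitable smooth toric (resp. convenient) ambient, so that \cite[Theorem~6.4]{D-K} is applicable. In the torus case this is a direct consequence of sch\"onness applied to $J = \{1,\dots,i+1\}$. In the affine case one additionally needs the closure of $Y^{(i)}_{t}$ to intersect every coordinate subspace in the expected dimension, and this is precisely what the convenience of each $P_{i}$ guarantees; without it, boundary strata at infinity contribute extra cohomology that breaks the Gysin isomorphism and hence the concentration claim.
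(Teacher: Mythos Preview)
Your proposal is correct and matches the paper's approach: the paper gives no detailed proof at all, merely noting that the generic fiber $Y_t$ is a smooth complete intersection (from the proofs of Theorems~\ref{th:1} and~\ref{th:2}) and then citing Danilov--Khovanskii \cite[Theorem~6.4]{D-K} for the Gysin/weak Lefschetz statement, exactly as you do; your inductive unpacking of that citation and the $\Phi$-equivariance argument for the monodromy are the natural elaborations, parallel to what is spelled out in Propositions~\ref{Gysin} and~\ref{pro:1}.

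One small point worth tightening: the hypersurface weak Lefschetz of Danilov--Khovanskii (their Proposition~3.9, used in Proposition~\ref{Gysin}) is stated for non-degenerate hypersurfaces in \emph{toric} ambients, whereas in your inductive step the ambient $Y^{(i-1)}_t$ is a complete intersection, not a torus. The passage from one to the other is exactly what \cite[Theorem~6.4]{D-K} supplies, so your citation is correct, but the phrase ``applied at each stage to the non-degenerate hypersurface $Y^{(i)}_t$ inside the smooth ambient $Y^{(i-1)}_t$'' slightly overstates what Proposition~3.9 alone gives you; it is cleaner (and closer to the paper) to invoke \cite[Theorem~6.4]{D-K} once for the full chain rather than pretend the hypersurface version iterates na\"ively.
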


Defining a finite subset $R_{f}\subset \CC$ by 
using $\partial P, P_{\infty} \subset P$ as in 
Sections \ref{sec:3} and \ref{sec:4}, 
we obtain the following results. 
In the case where $Y$ is family of subvarieties in 
$X_{0}= \CC^n$ we define $\Omega_0 \subset \CC^n$, 
$Y^{\circ} \subset Y$ and the projection 
$\pi^{\circ}: \CC^* \times \Omega_0 \to \CC^*$ 
as in Section \ref{sec:4}. 

\begin{theorem}\label{CIisom} 
Let $Y=f^{-1}_{1}(0)\cap\dots 
\cap f^{-1}_{k}(0)$ be a family of subvarieties in 
$T_{0}=(\CS)^n$. 
Assume that $Y$ is sch\"{o}n.
Then for any $\lambda\notin R_{f}$ the morphism
\[\psi_{t,\lambda}(j_{!}R\pi_{!}\CC_{Y})
\longrightarrow 
\psi_{t,\lambda}(j_{!}R\pi_{*}\CC_{Y})\]
induced by the one $R\pi_{!}\CC_{Y}\to 
R\pi_{*}\CC_{Y}$ is an isomorphism.
\end{theorem}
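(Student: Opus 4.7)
The plan is to follow the proof of Theorem~\ref{th:2} verbatim, upgrading the schönness input from a single hypersurface to the complete intersection schönness of Definition~\ref{cischon}. First I would choose a smooth subdivision $\Sigma$ of the dual fan $\Sigma_0$ of $\UH_f=\UH_{f_1}+\cdots+\UH_{f_k}$, together with its subfan $\Xi\subset\Sigma$ in $\RR^n\times\{0\}$, and form the corresponding toric compactification $\pi_\Sigma\colon X_\Sigma\to\CC$ of $\pi$. Taking $\overline{Y}\subset X_\Sigma$ to be the closure of $Y$, the key geometric input is that for each cone $\sigma\in\Sigma$ with supporting face $\widetilde{F}_\sigma\prec\UH_f$, the intersection $\overline{Y}\cap T_\sigma$ is cut out in $T_\sigma$ by the $k$ initial forms $I^{F_\sigma}_{f_i}$. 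Applying Definition~\ref{cischon} to every non-empty $J\subset\{1,\ldots,k\}$, I conclude that $\overline{Y}\cap T_\sigma$ is a smooth complete intersection of codimension $k$, that $\overline{Y}$ is smooth on a neighborhood of $\pi_\Sigma^{-1}(0)$, and that $D=\overline{Y}\setminus Y$ is normal crossing there.

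With this setup, the distinguished triangle argument of Theorem~\ref{th:2} goes through unchanged: applying $\psi_t\circ R(\pi_{\overline{Y}})_*$ to the standard triangle for $(j_Y)_!\CC_Y\to R(j_Y)_*\CC_Y$ and then truncating along the toric stratification of the horizontal part $D'=\bigcup_{\sigma\in\Xi}(\overline{Y}\cap\overline{T_\sigma})$ of $D$, the problem reduces to showing that for every $\sigma\in\Xi\setminus\{0\}$ and every $\lambda\notin R_f$ one has
\[
R\Gamma\bigl(\pi_\sigma^{-1}(0);\ \psi_{\pi_\sigma,\lambda}(\CC_{\overline{Y}\cap\overline{T_\sigma}})\bigr)\simeq 0,
\]
where $\pi_\sigma=\pi_{\overline{Y}}|_{\overline{Y}\cap\overline{T_\sigma}}$. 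Morihiko Saito's primitive decomposition of the nearby cycle mixed Hodge module then decomposes $\psi_{\pi_\sigma,\lambda}(\CC_{\overline{Y}\cap\overline{T_\sigma}})[n-k-\dim\sigma]$ into intermediate extensions of rank one local systems $\SL_\tau$ on the smooth complete intersection strata $\overline{Y}\cap T_\tau$, where $\tau\in\Sigma\setminus\Xi$ satisfies $\sigma\prec\tau$ and $\lambda^{d_\tau}=1$, with the monodromy of $\SL_\tau$ around the divisor associated to an adjacent ray $\rho$ equal to multiplication by $\lambda^{-q(\beta)}$, $\beta$ being its primitive vector. Cutting supports by the toric stratification of $\overline{T_\tau}$, it suffices to establish the vanishing $R\Gamma_c(\overline{Y}\cap T_\tau;\ \SL_\tau)\simeq 0$.

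The main obstacle, and the only place where the complete intersection case genuinely departs from the hypersurface argument, is to produce the product decomposition $\overline{Y}\cap T_\tau\simeq Z\times(\CC^*)^m$ in which the equation of $\overline{T_{\tau+\rho}}$ corresponds to a coordinate of $(\CC^*)^m$, for a ray $\rho$ with $\lambda^{-q(\beta)}\neq 1$. The eigenvalue-producing ray is obtained exactly as before: since $\lambda\notin R_f$ and $\lambda^{d_\tau}=1$, enlarging $\tau$ to any maximal cone $\tau'$ of $\Sigma$ with the same supporting face $\widetilde{F}_\tau\prec\UH_f$ produces a ray $\rho\prec\tau'$ outside $\tau$ whose primitive vector $\beta$ satisfies $\lambda^{-q(\beta)}\neq 1$. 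Because $\tau$, $\tau'$, $\rho$ all share the supporting face $\widetilde{F}_\tau$, which in the Minkowski sum $\UH_f=\UH_{f_1}+\cdots+\UH_{f_k}$ decomposes into supporting faces of the individual summands $\UH_{f_i}$, each of the $k$ initial Laurent polynomials $I^{F_\tau}_{f_i}$ is simultaneously independent of the coordinate dual to $\rho$, so the product decomposition holds for all $k$ defining equations at once. The Künneth formula combined with the nontrivial monodromy $\lambda^{-q(\beta)}\neq 1$ on the $(\CC^*)^m$-factor then yields the required vanishing, completing the proof.
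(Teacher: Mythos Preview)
Your proposal is correct and follows exactly the approach the paper intends: the paper does not give a separate proof of Theorem~\ref{CIisom} but states that it is obtained by the same arguments as Theorem~\ref{th:2}, and you have carried out precisely this adaptation, correctly identifying that the only new point is the product decomposition $\overline{Y}\cap T_\tau\simeq Z\times(\CC^*)^m$ and justifying it via the Minkowski decomposition of the supporting face $\widetilde{F}_\tau$ in $\UH_f=\UH_{f_1}+\cdots+\UH_{f_k}$.
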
 

\begin{corollary}\label{CINcc}
In the situation of Theorem \ref{CIisom}, 
for any $\lambda\notin R_{f}$ 
and $t\in \CS$ such that $0<|t|\ll1$ we have the 
concentration 
\[H^{j}_{c}(Y_{t};\CC)_{\lambda} 
\simeq 0 \qquad (j \not= n-k)\]
and the filtration on the only non-trivial 
cohomology group $H^{n-k}_c(Y_{t};\CC)_{\lambda}$ 
induced by Deligne's weight filtration on 
$H^{n-k}_c(Y_{t};\CC)$ is concentrated in 
degree $n-k$. 
\end{corollary}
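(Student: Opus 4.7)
The plan is to mimic almost verbatim the two-step argument used to deduce Corollary \ref{Ncc} from Theorem \ref{th:5}, since Theorem \ref{CIisom} provides the analogous input for the complete intersection case over the torus. The only thing to check is that the elementary dimension bounds still give us what we need with $n-1$ replaced by $n-k$.

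First, I would apply Theorem \ref{CIisom}. Taking $H^j$ of both sides of the isomorphism of nearby cycle complexes and using the standard identification $H^j\psi_{t,\lambda}(j_! R\pi_! \CC_Y) \simeq H^j_c(Y_t;\CC)_\lambda$ (and similarly for $R\pi_*$ giving $H^j(Y_t;\CC)_\lambda$), we obtain for every $\lambda \notin R_f$ and every $t \in \CC^*$ with $0 < |t| \ll 1$ canonical isomorphisms
\[
H^j_c(Y_t;\CC)_\lambda \simeq H^j(Y_t;\CC)_\lambda \qquad (j \in \ZZ).
\]

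Second, I would invoke that the sch\"onness hypothesis applied to the top cell $F = P$ of $\mathcal{S}$ forces the generic fiber $Y_t \subset T_0 = (\CC^*)^n$ to be a smooth non-degenerate affine complete intersection of pure dimension $n-k$. Then Artin's vanishing theorem for affine varieties gives $H^j(Y_t;\CC) = 0$ for $j > n-k$, while the generalized Poincar\'e duality $H^j_c(Y_t;\CC) \simeq [H^{2(n-k)-j}(Y_t;\CC)]^*$ gives $H^j_c(Y_t;\CC) = 0$ for $j < n-k$. Combining these two vanishings with the isomorphism of the previous paragraph yields the asserted concentration
\[
H^j_c(Y_t;\CC)_\lambda \simeq 0 \qquad (j \neq n-k).
\]

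Finally, for the purity statement on the only surviving group $H^{n-k}_c(Y_t;\CC)_\lambda$, I would argue exactly as in the last sentence of the proof of Corollary \ref{Ncc}: applying the proof of Sabbah \cite[Theorem 13.1]{Sabbah-2} to the isomorphisms $H^j_c(Y_t;\CC)_\lambda \simeq H^j(Y_t;\CC)_\lambda$ shows that the restriction of Deligne's weight filtration on $H^{n-k}_c(Y_t;\CC)$ to the $\lambda$-generalized eigenspace is pure of weight $n-k$. There is no real obstacle here, since the hard content is already packed into Theorem \ref{CIisom}; the corollary is simply the formal combination of that input with the affine-dimension vanishings and Sabbah's purity argument.
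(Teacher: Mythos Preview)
Your proposal is correct and matches the argument the paper intends; since Corollary \ref{CINcc} is stated without proof, the implied argument is precisely the straightforward adaptation you give of the torus-hypersurface case (Corollary \ref{CoEQEEP}) with $n-1$ replaced by $n-k$. One small remark: the closer template in the paper is Corollary \ref{CoEQEEP} rather than Corollary \ref{Ncc}, since in the $\CC^n$ case the paper needs the extra input of Theorem \ref{th:syucyuu} to handle the passage through $Y^{\circ}$, whereas here over the torus the fiber $Y_t$ is already globally smooth and affine of dimension $n-k$, so Artin vanishing plus Poincar\'e duality suffice directly, exactly as you wrote.
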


\begin{theorem}\label{th:555}
Let $Y=f^{-1}_{1}(0)\cap\dots 
\cap f^{-1}_{k}(0)$ be a family of subvarieties in 
$X_{0}=\CC^{n}$. 
Assume that $Y$ is sch\"{o}n. 
Then for any $\lambda\notin R_{f}$ the morphism
\[\psi_{t,\lambda}(j_{!}R\pi_{!}\CC_{Y^{\circ}}) \longrightarrow 
\psi_{t,\lambda}(j_{!}R\pi_{!}\CC_{Y})\]
induced by the one $\CC_{Y^{\circ}} \to \CC_{Y}$ is an isomorphism. 
Moreover for such $\lambda$ the morphism 
\[\psi_{t,\lambda}(j_{!}R (\pi^{\circ})_{!}\CC_{Y^{\circ}}) \longrightarrow 
\psi_{t,\lambda}(j_{!}R( \pi^{\circ})_{*}\CC_{Y^{\circ}})\]
induced by the one $R (\pi^{\circ})_{!}\CC_{Y^{\circ}}\to 
R ( \pi^{\circ})_{*}\CC_{Y^{\circ}}$ is an isomorphism.
\end{theorem}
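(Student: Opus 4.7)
The plan is to mimic the proofs of Theorems \ref{th:2} and \ref{th:5}, replacing the smooth hypersurface $\ov{Y}$ there by the complete intersection closure of $Y^{\circ}$ in a suitable toric compactification. Throughout, the sch\"onness assumption together with Definition \ref{cischon} guarantees that, after choosing a smooth subdivision $\Sigma$ of $\Sigma_0$ refining the fan $\Sigma_1^{\circ}$ associated to the coordinate cones meeting $P \setminus P_{\infty}$, the closure $\ov{Y^{\circ}} \subset X_{\Sigma}$ is a smooth complete intersection on a neighborhood of $\pi_{\Sigma}^{-1}(0)$ and that $\ov{Y^{\circ}} \setminus Y^{\circ}$ is a normal crossing divisor there. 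Hence all the mixed Hodge module machinery used in Theorem \ref{th:2} (Saito's primitive decomposition of nearby cycles along a normal crossing divisor) is applicable verbatim.

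For the first isomorphism, I would reduce to showing the vanishing
\[
\psi_{t,\lambda}(j_{!}R\pi_{!}\CC_{Y \setminus Y^{\circ}}) \simeq 0
\qquad ( \lambda \notin R_{f} ).
\]
Decomposing $X_{0} \setminus \Omega_{0}$ into the coordinate tori $T^{I} \simeq ( \CC^{*})^{|I|}$ ($I \subsetneq \{ 1, \ldots , n \}$ with $P^{I} \subset P_{\infty}$ or $\dim P^{I} < |I|$), each intersection $Y \cap ( \CC^{*} \times T^{I} )$ is a sch\"on family of complete intersections in $T^{I}$ in the sense of Definition \ref{cischon}, with associated Newton polytope $P \cap \RR^{I}$ and finite set $R_{f_{I}} \subset R_{f}$. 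Applying Corollary \ref{CINcc} (the torus case) to each such piece yields the required stratum-by-stratum vanishing, whence the first assertion by additivity of $\psi_{t,\lambda}$ in distinguished triangles.

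For the second isomorphism, I would proceed exactly as in Theorem \ref{th:2}. Writing $i_{D}: D \hookrightarrow \ov{Y^{\circ}}$ and $j_{Y^{\circ}}: Y^{\circ} \hookrightarrow \ov{Y^{\circ}}$ for the natural inclusions and using the properness of $\pi_{\ov{Y^{\circ}}} = \pi_{\Sigma}|_{\ov{Y^{\circ}}}$, the claim reduces to the vanishing
\[
R \Gamma \Bigl( \pi_{\ov{Y^{\circ}}}^{-1}( \{ 0 \}); \psi_{\pi_{\ov{Y^{\circ}}}, \lambda} \bigl( (i_{D})_{*} (i_{D})^{-1} R(j_{Y^{\circ}})_{*} \CC_{Y^{\circ}} \bigr) \Bigr) \simeq 0
\]
for $\lambda \notin R_{f}$. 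Stratifying the complex by the toric strata of $X_{\Sigma}$ and truncating, the problem becomes the vanishing of $R \Gamma_{c}( \ov{Y^{\circ}} \cap T_{\tau}; \SL_{\tau} )$ for each cone $\tau \in \Sigma \setminus \Xi$ such that $\lambda^{d_{\tau}} = 1$, where $\SL_{\tau}$ is the rank one local system arising in the primitive decomposition of $\psi_{\pi_{\ov{Y^{\circ}}}, \lambda}$. The key combinatorial input, carried over unchanged from Theorem \ref{th:2}, is that for such $\tau$ the supporting face $\widetilde{F_{\tau}} \prec \UH_{f}$ projects to a cell $F \subset \partial P \setminus (P \setminus P_{\infty})$, so taking a smooth $(n+1-\dim F)$-dimensional $\tau' \succ \tau$ inside $F^{\circ}$ one has $d_{\tau'} = m_{F}$ and the assumption $\lambda \notin R_{f}$ forces $\lambda^{d_{\tau'}} \neq 1$. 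This produces a ray $\rho$ of $\tau'$ with $\tau \cap \rho = \{ 0 \}$ whose primitive vector $\beta$ satisfies $\lambda^{-q(\beta)} \neq 1$, and the corresponding $\CC^{*}$-factor in $\ov{Y^{\circ}} \cap T_{\tau}$ (split off by sch\"onness and the Minkowski structure of $\UH_{f}$) kills the cohomology by K\"unneth.

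The main technical obstacle I anticipate is the K\"unneth-type product decomposition in the last step. In the hypersurface case of Theorem \ref{th:2} this is direct because $\ov{Y} \cap T_{\tau}$ is itself a smooth hypersurface defined by $f_{\widetilde{F_{\tau}}}$, and the coordinate dual to $\rho$ is independent of this equation whenever $\rho$ is not contained in the supporting face. For complete intersections one must check that the $k$ equations $(f_{i})_{\widetilde{F_{i,\tau}}}$ defining $\ov{Y^{\circ}} \cap T_{\tau}$ are all independent of the coordinate dual to $\rho$; this should follow from $\widetilde{F_{\tau}} = \widetilde{F_{1,\tau}} + \cdots + \widetilde{F_{k,\tau}}$ together with $\rho \not\subset \widetilde{F_{\tau}}^{\circ}$ implying $\rho \not\subset \widetilde{F_{i,\tau}}^{\circ}$ for every $i$, but verifying this Minkowski-sum compatibility of supporting faces is the step that needs the most care.
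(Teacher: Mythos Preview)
Your proposal is correct and follows exactly the strategy the paper intends: the paper gives no explicit proof of Theorem~\ref{th:555}, merely placing it alongside Theorem~\ref{CIisom} as the complete-intersection analogue of Theorems~\ref{th:2} and~\ref{th:5}, so adapting those arguments with $\ov{Y}$ replaced by the closure of the complete intersection in $X_\Sigma$ is precisely what is required.

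Two small remarks. First, for the stratum-by-stratum vanishing on $Y\setminus Y^{\circ}$, Corollary~\ref{CINcc} alone only gives \emph{concentration} of $H^{j}_{c}(Y^{I}_{t};\CC)_{\lambda}$ in the single degree $|I|-k$; to see that this concentrated piece also vanishes you need the explicit formula of Theorem~\ref{propci} (or equivalently the vanishing $l^{*}_{\lambda}(F,\nu_{f}|_{F};\cdot)=0$ for every cell $F\subset P^{I}\subset P_{\infty}$, which holds because $\lambda^{m_{F}}\neq 1$). This is exactly parallel to the paper's own use of both Proposition~\ref{Gysin} and Theorem~\ref{EQEEP} in the proof of Theorem~\ref{th:5}.

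Second, your worry about the K\"unneth step is easily dispatched. The supporting face of a Minkowski sum $\UH_{f}=\UH_{f_{1}}+\cdots+\UH_{f_{k}}$ in a direction $u$ is the sum of the supporting faces of the summands in that same direction. Hence if $\tau$ and $\tau'=\tau+\rho$ both lie in $\relint F^{\circ}$ (so both have supporting face $\widetilde{F}$ in $\UH_{f}$), then both have supporting face $\widetilde{F_{i}}$ in $\UH_{f_{i}}$ for every $i$. Consequently each initial polynomial $(f_{i})_{\widetilde{F_{i}}}$ is independent of the coordinate dual to $\rho$, and the product decomposition $\ov{Y^{\circ}}\cap T_{\tau}\simeq Z\times(\CC^{*})^{k'}$ (with the $\rho$-coordinate in the torus factor) goes through exactly as in the hypersurface case.
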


We have also a generalization of Remark \ref{imrem}. 
By Corollary \ref{CINcc} and 
Bernstein-Khovanskii-Kushnirenko's theorem, 
in the case where $Y$ is family of subvarieties in 
$T_{0}=( \CC^*)^n$ we obtain the 
following formula for the 
multiplicities of the eigenvalues 
$\lambda\notin R_{f}$ in the middle-dimensional monodromy
$\Phi_{n-k}\colon H^{n-k}_{c}(Y_{t};\CC) 
\xrightarrow{\sim} H^{n-k}_{c}(Y_{t};\CC)$ 
by calculating monodromy zeta functions as in 
\cite{M-T-2}. 

\begin{definition}\label{Mixvol}
Let $\Delta_1,\ldots,\Delta_n$ 
be lattice 
polytopes in $\RR^n$. Then we define 
their normalized ($n$-dimensional) 
mixed volume 
$\Vol_{\ZZ}( \Delta_1,\ldots,\Delta_n) 
\in \ZZ$ by the formula 
\begin{equation}
\Vol_{\ZZ}( \Delta_1, \ldots , \Delta_n)=
\frac{1}{n!} 
\dsum_{k=1}^n (-1)^{n-k} 
\sum_{\begin{subarray}{c}I\subset 
\{1,\ldots,n\}\\ |I| =k\end{subarray}}
\Vol_{\ZZ}\left(
\dsum_{i\in I} \Delta_i \right)
\end{equation}
where $\Vol_{\ZZ}(\ \cdot\ )
= n! \Vol (\ \cdot\ ) \in \ZZ$ is 
the normalized ($n$-dimensional) volume 
with respect to the lattice $\ZZ^n 
\subset \RR^n$.
\end{definition}

Let $\Sigma_0$ be the dual fan of $\UH_f$ in $\RR^{n+1}$. 
For a cell $F$ in $\mathcal{S}$ let 
$\widetilde{F}\prec \UH_{f}$ be the  
unique compact face 
of $\UH_{f}$ such that $F=p(\widetilde{F})$ 
and $F^{\circ} \in \Sigma_0$ the 
cone which corresponds to it 
in the dual fan $\Sigma_0$. Then for 
the supporting faces $\widetilde{F_i} \prec \UH_{f_i}$ 
of $F^{\circ}$ in $\UH_{f_i}$ 
($1 \leq i \leq k$) we have 
$\widetilde{F_1} + \cdots + \widetilde{F_k}
= \widetilde{F}$. 

\begin{theorem}\label{CIaffinemulti}
Assume that the family $Y=f^{-1}_{1}(0)\cap\dots 
\cap f^{-1}_{k}(0)$ of subvarieties in 
$T_{0}=(\CS)^{n}$ 
is sch\"{o}n. Then for $\lambda\notin R_{f}$ 
the multiplicity of the factor $t- \lambda$ in 
the characteristic polynomial of the 
middle-dimensional monodromy 
\[\Phi_{n-k}:H^{n-k}_{c}(Y_{t};\CC)
\xrightarrow{\sim}H^{n-k}_{c}(Y_{t};\CC) 
\qquad (0<|t|\ll1)\]
is equal to that in 
\[ 
\prod_{\relint F \subset \Int P, \ \dim F=n} 
(t^{m_F}-1)^{ K_F }, 
\]
where we set 
\[
K_F= 
\dsum_{\begin{subarray}{c} 
m_1,\ldots,m_k \geq 1\\ m_1+\cdots +m_k= \dim F 
\end{subarray}}\Vol_{\ZZ}(
\underbrace{\widetilde{F_1},\ldots, \widetilde{F_1}}_{\text{
$m_1$-times}},\ldots, 
\underbrace{\widetilde{F_k},
\ldots, \widetilde{F_k}}_{\text{$m_k$-times}}) 
\]
by using the normalized mixed volumes 
with respect to the lattice $\Aff ( \widetilde{F} ) 
\cap \ZZ^{n+1} \simeq \ZZ^{\dim F}$ in 
$\Aff ( \widetilde{F} ) \simeq \RR^{\dim F}$. 
\end{theorem}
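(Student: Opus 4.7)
The plan is to compute the monodromy zeta function of the generic fiber via A'Campo's formula on a toric compactification, and then read off the factors corresponding to eigenvalues $\lambda\notin R_{f}$, extending the hypersurface arguments of \cite{M-T-2} and Theorem~\ref{torusmulti} to complete intersections. By Corollary~\ref{CINcc} the cohomology $H^{j}_{c}(Y_{t};\CC)_{\lambda}$ is concentrated in degree $j=n-k$ for every $\lambda\notin R_{f}$. Hence, writing
\[
\zeta(s):=\prod_{j\in\ZZ}\det\bigl(s\cdot\mathrm{id}-\Phi_{j}\bigr)^{(-1)^{j}}
\]
for the full monodromy zeta function of $Y_{t}$, the multiplicity of $(s-\lambda)$ in $\det(s\cdot\mathrm{id}-\Phi_{n-k}|_{H^{n-k}_{c}(Y_{t};\CC)_{\lambda}})$ equals $(-1)^{n-k}$ times its multiplicity in $\zeta(s)$ for every $\lambda\notin R_{f}$.

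Next, I would choose a smooth subdivision $\Sigma$ of the dual fan $\Sigma_{0}$ of $\UH_{f}$ and compactify $T=\CS\times(\CS)^{n}$ to $X_{\Sigma}$ exactly as in the proof of Theorem~\ref{th:1}. By sch\"onness, $\overline{Y}\subset X_{\Sigma}$ is smooth and meets every torus orbit $T_{\sigma}$ transversally, so the zero divisor of $t\circ\pi_{\Sigma}|_{\overline{Y}}$ is normal crossing with smooth components and A'Campo's formula yields
\[
\zeta(s)=\prod_{\rho\in\Sigma\setminus\Xi}\bigl(1-s^{b_{\rho}}\bigr)^{\chi(\overline{Y}\cap T_{\rho})},
\]
with $b_{\rho}=q(\alpha_{\rho})>0$.

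For each ray $\rho\in\Sigma\setminus\Xi$, let $\sigma(\rho)\in\Sigma_{0}$ be the unique cone whose relative interior contains $\rho$, set $\widetilde{F_{\rho}}:=\widetilde{F_{\sigma(\rho)}}\prec\UH_{f}$ and $F_{\rho}:=p(\widetilde{F_{\rho}})\in\mathcal{S}$. As in the proof of Theorem~\ref{th:1} one has
\[
\bigl[\,\widetilde{W_{\rho}}\,\bigr]=\bigl[\,V_{F_{\rho}}\,\bigr]\cdot(\LL-1)^{n-\dim F_{\rho}}\qquad\text{in }\M_{\CC}^{\hat{\mu}},
\]
where $\widetilde{W_{\rho}}$ is the unramified Galois $\mu_{b_{\rho}}$-cover of $W_{\rho}:=\overline{Y}\cap T_{\rho}$. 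Taking Euler characteristics and using $\chi((\CS)^{m})=0$ for $m\geq 1$, the only rays contributing to the above product are those with $\dim F_{\rho}=n$; these are precisely the rays of $\Sigma_{0}\setminus\Xi_{0}$, in bijection with the $n$-cells $F\in\mathcal{S}$ via $\rho\mapsto F_{\rho}=F$, and they satisfy $b_{\rho}=m_{F}$ together with $\chi(W_{\rho})=\chi(V_{F})/m_{F}$. The Bernstein--Khovanskii--Kushnirenko Euler-characteristic formula for non-degenerate complete intersections in $(\CS)^{n}$, combined with the lattice-index identity $[\ZZ^{n}:p(\Aff(\widetilde{F})\cap\ZZ^{n+1})]=m_{F}$, then gives $\chi(V_{F})=(-1)^{n-k}m_{F}K_{F}$ and hence $\chi(W_{\rho})=(-1)^{n-k}K_{F}$. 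Substituting, the zeta function becomes
\[
\zeta(s)=\prod_{\substack{F\in\mathcal{S}\\ \dim F=n}}\bigl(1-s^{m_{F}}\bigr)^{(-1)^{n-k}K_{F}},
\]
and since every $n$-cell of $\mathcal{S}$ automatically satisfies $\relint F\subset\Int P$, extracting the multiplicity of $(s-\lambda)$ for $\lambda\notin R_{f}$ and applying the first reduction gives exactly the formula asserted in the theorem.

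The main obstacle is the identification $\chi(V_{F})=(-1)^{n-k}m_{F}K_{F}$: it requires the Bernstein--Khovanskii--Kushnirenko Euler-characteristic formula for sch\"on complete intersections in $(\CS)^{n}$ together with the precise comparison of mixed volumes computed with respect to the two lattices $\Aff(F)\cap\ZZ^{n}$ and $\Aff(\widetilde{F})\cap\ZZ^{n+1}$. Once that is granted, the vanishing of refinement-ray contributions is immediate from the $(\CS)^{n-\dim F_{\rho}}$ factor and the absence of boundary terms is automatic since $\dim F=n=\dim P$ prevents $F\subset\partial P$.
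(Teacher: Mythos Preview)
Your approach is essentially identical to the one the paper indicates: the paper states just before Theorem~\ref{CIaffinemulti} that the result follows ``by Corollary~\ref{CINcc} and Bernstein--Khovanskii--Kushnirenko's theorem \ldots\ by calculating monodromy zeta functions as in \cite{M-T-2}'', which is precisely your plan of reducing to the zeta function via the concentration, computing it on the toric compactification from the proof of Theorem~\ref{th:1}, and evaluating the surviving top-cell contributions by BKK together with the lattice-index comparison $[\ZZ^n:p(\Aff(\widetilde{F})\cap\ZZ^{n+1})]=m_F$. Your identification of the one nontrivial step---matching $\chi(V_F)$ with $(-1)^{n-k}m_F K_F$ via BKK and the change of lattices---is exactly the substance behind the paper's appeal to BKK, so there is nothing to add.
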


For each subset $J\subset \{1,\dots,k\}$, set $\RR^{J}:
=\{(x_{1},\dots,x_{k})\in\RR^{k}\mid
x_{j}=0~(j\notin J)\}\simeq\RR^{|J|}$ 
and
\begin{align*}
U_{J}:=\Conv(\bigcup_{j\in J}{\{e_{j}\}\times 
\mathrm{UH}_{f_{j}}})\subset \RR^{J}\times \RR^{n+1},
\\P_{J}:=\Conv(\bigcup_{j\in J}{\{e_{j}\}\times P_{j}}) 
\subset \RR^{J}\times \RR^{n},
\end{align*}
where $e_{j}=(0,\dots,0, 
\overset{j}{\hat{1}}, 0, \dots,0)\in 
\RR^{J}$ is the standard vector.
Obviously, the image of $U_{J}$ by the projection 
$p_{J}\colon\RR^{J}\times 
\RR^{n}\times\RR_{s} \to \RR^{J}\times \RR^{n}$ is $P_{J}$.
We write $\tl{p}$, $\tl{U}$ and $\tl{P}$ for 
$p_{\{1,\dots,k\}}$, $U_{\{1,\dots,k\}}$ and 
$P_{\{1,\dots,k\}}$, respectively.
Let $\tl{\nu}\colon\tl{P}\to\RR$ be the function 
defining the bottom part of the boundary
$\partial \tl{U}$ of $\tl{U}$ and $\tl{\mathcal{S}}$ 
the subdivision of $\tl{P}$ by the
lattice polytopes $\tl{p}(\tl{F})\subset\tl{P}~(\tl{F}\prec\tl{U})$.
By the assumption that the dimension of $P$ is $n$, we have 
$\dim{\tl{U}}=n+k$ and 
$\dim{\tl{P}}=n+k-1$.
We obtain the following generalization of Theorem~\ref{st:5.7} to families of 
complete intersection subvarieties of $T_0= (\CS)^n$.
Recall that for $\lambda \in \CC$ we set
\[
\varepsilon ( \lambda ) =\left\{
\begin{array}{ll}
1 &( \lambda =1 )\\\\ 
0 &( \lambda \not= 1 ). \\
\end{array}
\right.
\]

\begin{theorem}\label{propci} 
Let $Y=f^{-1}_{1}(0)\cap\dots\cap f^{-1}_{k}(0)$ be a family of 
subvarieties of $T_{0}=(\CS)^n$.
Assume that $Y$ is sch\"{o}n. 
Then for any $\lambda \in \CC$ the equivariant refined limit mixed Hodge 
polynomial $E_{\lambda}(Y_{\infty};u,v,w)\in\ZZ[u,v,w]$ 
for the eigenvalue $\lambda$
is given by
\begin{align*}
(uvw^{2})^{k}E_{\lambda}(Y_{\infty};u,v,w)= & 
\varepsilon ( \lambda ) \cdot (uvw^2-1)^n 
\\
 + \sum_{\emptyset\neq 
J\subset\{1,\dots,k\}} & 
(-1)^{\dim P_J -1} 
(uvw^2-1)^{n+ |J| -1- \dim P_J} \cdot 
h^{*}_{\lambda}(P_{J},\tl{\nu}|_{P_{J}};u,v,w).
\end{align*}
\end{theorem}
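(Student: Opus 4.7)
The plan is to parallel the strategy of Theorem \ref{st:5.7} (the hypersurface case), now using a version of the motivic nearby-fiber formula (Theorem \ref{th:1}) adapted to complete intersections. Following the remark after Theorem \ref{th:1}, the same proof generalizes to sch\"on families of subvarieties of $T_{0} \simeq (\CS)^{n}$ and yields the identity
\[ \psi_{t}([Y]) = \sum_{\relint F \subset \Int P}[V_{F}\circlearrowleft \MH]\cdot (1-\LL)^{n-\dim{F}} \]
in $\M_{\CC}^{\hat{\mu}}$, where now $V_{F} = \bigcap_{j=1}^{k}\{I_{f_{j}}^{F}=0\}$ is a non-degenerate complete intersection in the torus $T_{F} \simeq (\CS)^{\dim F}$ for each cell $F$ of the subdivision $\mathcal{S}$. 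This first step reduces, as in the hypersurface case, to the transversality consequences of Definition \ref{cischon} combined with Theorem \ref{GLMT} applied to a smooth toric refinement of the dual fan of $\UH_{f}$.

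Applying the Hodge realization morphism of Proposition \ref{GLMCD}, this motivic identity translates into
\[ E_{\lambda}(Y_{t}; u, v, w) = \sum_{\relint F \subset \Int P} E_{\lambda}(V_{F}\circlearrowleft \MH; u, v, w) \cdot (1 - uvw^{2})^{n-\dim F}. \]
One then invokes the equivariant Ehrhart machinery of Danilov--Khovanskii and Batyrev--Borisov for non-degenerate complete intersections in algebraic tori: the polynomials $E_{\lambda}(V_{F}\circlearrowleft \MH; u, v, w)$ admit closed-form expressions in terms of local $\lambda$-weighted $h^{*}$-polynomials of Cayley polytopes built from the initial forms $(I_{f_{j}}^{F})_{j=1}^{k}$ attached to the cell $F$. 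Substituting these expressions into the identity above and exchanging the order of summation between cells $F \in \mathcal{S}$ and nonempty subsets $J \subseteq \{1, \ldots, k\}$, one expects to recover the global Cayley contributions $h^{*}_{\lambda}(P_{J}, \tl{\nu}|_{P_{J}}; u, v, w)$ appearing in the claimed formula.

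The main obstacle will be this combinatorial rearrangement: one must verify that the local Cayley $h^{*}$-contributions at each cell $F$, weighted by $(1-uvw^{2})^{n-\dim F}$, repackage exactly into the global terms $(-1)^{\dim P_{J}-1}(uvw^{2}-1)^{n+|J|-1-\dim P_{J}} \cdot h^{*}_{\lambda}(P_{J}, \tl{\nu}|_{P_{J}}; u, v, w)$. This is the complete-intersection analogue of the combinatorial identity used in deducing Theorem \ref{st:5.7} from Corollary \ref{cor:1}, and should follow from Stanley's theory \cite{Stanley} together with the Cayley-polytope refinements developed in \cite{K-S-1} and \cite{Stapledon}. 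The overall factor $(uvw^{2})^{k}$ on the left-hand side of the formula arises from Tate twists implicit in the codimension-$k$ inclusion $Y_{t} \hookrightarrow (\CS)^{n}$, while the residual term $\varepsilon(\lambda)(uvw^{2}-1)^{n}$ corresponds to the contribution of the ambient torus itself (formally, the empty-subset contribution $J = \emptyset$ to the sum).
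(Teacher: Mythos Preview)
Your approach is genuinely different from the paper's, and while it is a reasonable strategy in outline, the paper's route is far more direct.

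The paper does \emph{not} attack the complete intersection head-on via a generalized motivic nearby-fiber formula. Instead it uses the Cayley trick of Danilov--Khovanskii (in the refined form of \cite{E-T}) to reduce immediately to the hypersurface case, so that Theorem~\ref{st:5.7} can be applied as a black box. Concretely, one introduces the incidence variety
\[
\Omega=\Bigl\{(t,x,[\alpha_1:\cdots:\alpha_k])\in B^{*}\times T_0\times\PP^{k-1}\ \Big|\ \sum_{i=1}^{k}\alpha_i f_i(t,x)\neq 0\Bigr\},
\]
whose projection to $(B^{*}\times T_0)\setminus Y$ is a $\CC^{k-1}$-bundle, giving $E_\lambda(\Omega_t;u,v,w)=-(uvw^2)^{k-1}E_\lambda(Y_t;u,v,w)$ for $\lambda\neq 1$ (the case $\lambda=1$ is analogous). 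Decomposing $\PP^{k-1}=\bigsqcup_{\emptyset\neq J}T_J$ into tori and using Lemma~\ref{new-lemma}, one is left with the families $Y_J=\{\sum_{j\in J}\alpha_j f_j=0\}\subset B^{*}\times T_0\times T_J$. Each $Y_J$ is now a sch\"on \emph{hypersurface} family in a torus of dimension $n+|J|-1$, with upper-half polyhedron $U_J$ and Newton polytope $P_J$; Theorem~\ref{st:5.7} gives $uvw^2\,E_\lambda(Y_{J,t};u,v,w)$ directly in terms of $h^{*}_\lambda(P_J,\tl{\nu}|_{P_J};u,v,w)$. Summing over $J$ yields the stated formula with no further combinatorics.

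Your plan, by contrast, would require establishing the three-variable analogue of Theorem~\ref{st:5.7} for non-degenerate complete intersections $V_F\subset T_F$ from scratch, and then carrying out a Cayley-type combinatorial inversion by hand. There is also a gap in your second displayed identity: Proposition~\ref{GLMCD} only produces the two-variable polynomial $E_\lambda(Y_t;u,v)$ (as in Corollary~\ref{cor:1}); the variable $w$ records Deligne's weight filtration $W_\bullet$, which is not visible from the motivic nearby fiber alone. Recovering the full $(u,v,w)$-polynomial is precisely the ``deep combinatorics'' step the paper attributes to \cite{Stanley}, \cite{K-S-1}, \cite{Stapledon} in the hypersurface case. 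The Cayley trick sidesteps all of this by invoking that already-proved hypersurface result rather than redoing it for complete intersections.
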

\begin{proof}
We prove the assertion 
only in the case where $\lambda \not= 1$. 
The proofs for the other cases are similar. 
We use the Cayley trick of Danilov-Khovanskii 
\cite{D-K} in its refined form of \cite{E-T}.
For sufficiently small $\varepsilon>0$ we set $B^{*}=
B(0,\varepsilon)^{*}\subset\CC$.
Then we have $Y\subset B^*\times T_{0}$.
Moreover we set
\[\Omega:=\{(t,(x_{1},\dots,x_{n}),[\alpha_{1}:
\dots:\alpha_{k}])\in B^{*}\times T_{0}
\times \PP^{k-1} \ | \ \sum_{i=1}^{k}{\alpha_{i}f_{i}(t,x)\neq 0}\}.\]
Then there exists a projection $\Omega\to (B^*\times T_{0})\setminus Y$
which is a locally trivial fibration with fiber $\CC^{k-1}$.
Hence it follows from Lemma \ref{new-lemma} that for any $\lambda\neq 1$
we have
\begin{align*}
E_{\lambda}(\Omega_{\infty};u,v,w)&=(uvw^{2})^{k-1}E_{\lambda}(
((B^*\times T_{0}) \setminus Y)_{\infty};u,v,w)\\
&=(uvw^{2})^{k-1}(E_{\lambda}((B^*\times T_{0})_{\infty};u,v,w)-
E_{\lambda}(Y_{\infty};u,v,w))\\
&=-(uvw^2)^{k-1}E_{\lambda}(Y_{\infty};u,v,w).
\end{align*}
For each non-empty subset $J\subset\{1,\dots,k\}$
we define a subset $T_{J} \simeq (\CS)^{|J|-1}$ 
of $\PP^{k-1}$ by
\[T_{J}:=\{[\alpha_{1}:\dots:\alpha_{k}]\in\PP^{k-1}\ |
 \ \alpha_{j}\neq0~(j\in J),
\alpha_{j}=0~(j\notin J)\} \simeq (\CS)^{|J|-1}.\]
Moreover we set 
\begin{align*}
\Omega_{J}&:=\Omega\cap(B^{*}\times T_{0} \times T_{J}),\\
Y_{J}&:=(B^{*}\times T_{0}\times T_{J})\setminus \Omega_{J}\\
&=\{(t,(x_{1},\dots,x_{n}),[\alpha_{1}:\dots:\alpha_{k}])
\in B^{*}\times T_{0} \times T_{J} \ | \ \sum_{j\in J}
\alpha_{j}f_{j}(t,x)=0\}.
\end{align*}
Then for $\lambda\neq 1$ we have
\begin{align*}
E_{\lambda}(\Omega_{\infty};u,v,w)&=\sum_{J\neq \emptyset}
E_{\lambda}(\Omega_{J,\infty};u,v,w)\\
&=\sum_{J\neq \emptyset}(E_{\lambda}(
(B^{*}\times T_{0}\times T_{J})_{\infty};u,v,w)
-E_{\lambda}(Y_{J,\infty};u,v,w))\\
&=-\sum_{J\neq \emptyset}E_{\lambda}(Y_{J,\infty};u,v,w).
\end{align*}
We thus obtain the equality
\[(uvw^2)^{k-1}E_{\lambda}(Y_{\infty};u,v,w)=\sum_{J\neq 
\emptyset}E_{\lambda}(Y_{J,\infty};u,v,w).\]
It is easy to check that $Y_{J}$ is sch\"on.
Note that $U_{J}$ is $\mathrm{UH}_{\sum_{j\in J}{\alpha_{j}f_{j}}}$.
Therefore applying Theorem~\ref{st:5.7} to the families 
$Y_{J}\subset B^{*}\times T_{0}
\times T_{J}$
we obtain
\[(uvw^2)E_{\lambda}(Y_{J,\infty};u,v,w)
=(-1)^{\dim P_J -1}
(uvw^2-1)^{n+ |J| -1- \dim P_J} \cdot 
h^{*}_{\lambda}(P_{J},\tl{\nu}|_{P_{J}};u,v,w).\]
Now the assertion follows immediately. 
\end{proof}

\begin{corollary}\label{corpropci} 
In the situation of Theorem \ref{propci}, 
assume also that $\dim P_i=n$ for any 
$1 \leq i \leq k$. 
Then for $\lambda \in \CC$ we have 
\[ 
(uvw^{2})^{k}E_{\lambda}(Y_{\infty};u,v,w)= 
\varepsilon ( \lambda ) \cdot (uvw^2-1)^n 
 + \sum_{\emptyset\neq 
J\subset\{1,\dots,k\}}
(-1)^{n+|J|} 
h^{*}_{\lambda}(P_{J},\tl{\nu}|_{P_{J}};u,v,w).
\] 
\end{corollary}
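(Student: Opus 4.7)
The plan is to derive the corollary as a direct specialization of Theorem~\ref{propci}: under the added hypothesis $\dim P_i = n$ for every $1 \leq i \leq k$, the dimension of each Cayley-type polytope $P_J$ becomes a function of $|J|$ alone, which collapses the $(uvw^2-1)$-factor and simplifies the sign. So the whole argument reduces to computing $\dim P_J$.

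First I would recall the definition
\[
P_{J}=\Conv\Bigl(\bigcup_{j\in J}\{e_{j}\}\times P_{j}\Bigr) \subset \RR^{J}\times \RR^{n}
\]
and show that, under our assumption, $\dim P_J = n + |J| - 1$ for every non-empty $J \subset \{1,\dots,k\}$. Indeed, since each $P_j$ is $n$-dimensional, the affine span of $\{e_j\}\times P_j$ equals $\{e_j\}\times \RR^n$. Hence the affine span of $P_J$ contains $\{e_j\}\times \RR^n$ for every $j\in J$; by translating by the $|J|-1$ independent differences $e_j - e_{j_0}$ ($j\in J\setminus\{j_0\}$, for any fixed $j_0$), the affine span of $P_J$ has dimension exactly $n + (|J|-1)$. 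The reverse inequality $\dim P_J \leq n+|J|-1$ is immediate since $P_J$ lies in the affine span of the $|J|$ translates of $\RR^n$, which has dimension $n+|J|-1$.

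Next I would substitute $\dim P_J = n + |J| - 1$ into the formula of Theorem~\ref{propci}. This produces two simplifications: the exponent of $(uvw^2 - 1)$ becomes
\[
n + |J| - 1 - \dim P_J = 0,
\]
so the factor equals $1$; and the sign becomes
\[
(-1)^{\dim P_J - 1} = (-1)^{n+|J|-2} = (-1)^{n+|J|}.
\]
Substituting these values term by term yields exactly the claimed identity.

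There is essentially no obstacle: the content is the geometric observation that the Cayley construction $P_J$ has the expected dimension when all factors are full-dimensional. The only thing to double-check is that Theorem~\ref{propci} is applied under its stated hypotheses, i.e., that the family $Y$ is still sch\"on and $\dim P = n$; the former is the same hypothesis as in the corollary, and the latter follows trivially from $\dim P_1 = n$ and $P \supset P_1$.
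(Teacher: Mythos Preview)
Your proposal is correct and matches the paper's intended approach: the corollary is stated in the paper without proof, as an immediate specialization of Theorem~\ref{propci} once one knows $\dim P_J = n + |J| - 1$, and your computation of this dimension is fine. One small slip: you write ``$P \supset P_1$'', but $P = P_1 + \cdots + P_k$ is a Minkowski sum and need not literally contain $P_1$; rather it contains a translate of $P_1$, which suffices for $\dim P = n$ (and in any case $\dim P = n$ is a standing assumption in the section).
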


From now on, we consider only families 
$Y=f^{-1}_{1}(0)\cap\dots\cap f^{-1}_{k}(0)$ of 
subvarieties of $X_{0}= \CC^n$. 
The corresponding results for families of 
subvarieties of $T_{0}=( \CC^*)^n$ 
can be obtained similarly. 
For a cell $F\in\tl{\mathcal{S}}$ let $\tl{\nu_{F}}
\colon \Aff(F)\simeq \RR^{\dim{F}}\longrightarrow \RR$ 
be the (affine) linear extension of $\tl{\nu}|_{F}\colon F\longrightarrow \RR$ and define a positive integer $\tl{m_{F}}$ to be the minimal one $m$ for which $m\cdot\tl{\nu_{F}}$ takes only integer values on $\Aff(F)\cap\ZZ^{n+k}$.
Let $\Delta$ be the convex hull of the points $e_1, \ldots, e_k$ 
in $\RR^k$. Then $\Delta$ is a $(k-1)$-dimensional 
lattice simplex and we have $\tl{P} \subset 
\Delta \times \RR_+^n$. 
Now let us set 
\[
\tl{P}_{\infty}:=\overline{ 
 \{ \Int ( \Delta ) \times \Int(
\RR^{n}_{+}) \} \cap \partial{\tl{P}}}\subset \partial{\tl{P}}.
\]
Then we define a finite subset $\tl{R_{f}}\subset \CC$ by
\[ \tl{R_{f}} =\bigcup_{F\subset \tl{P}_{\infty}}\{\lambda\in
\CC\ |\ \lambda^{ \tl{m_{F}} }=1\}\subset\CC.\]

\begin{lemma}
We have $R_f= \tl{R_{f}}$. 
\end{lemma}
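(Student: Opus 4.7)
The plan is to prove the equality by exhibiting, via the Cayley trick, a correspondence between the cells $F \in \mathcal{S}$ contained in $P_\infty$ and the cells $\tl{F} \in \tl{\mathcal{S}}$ contained in $\tl{P}_\infty$ whose relative interior meets $\Int ( \Delta ) \times \RR^n$, under which the associated lattice indices $m_F$ and $\tl{m_{\tl{F}}}$ agree. Once this is established, both $R_f$ and $\tl{R_f}$ are expressed as the same union of roots of unity and the lemma follows.

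First I would exploit the Minkowski decomposition $\UH_f = \UH_{f_1} + \cdots + \UH_{f_k}$: every bounded face of $\UH_f$ decomposes uniquely as a Minkowski sum of bounded faces of the $\UH_{f_j}$, so every cell $F \in \mathcal{S}$ admits a unique decomposition $F = F_1 + \cdots + F_k$ with $F_j$ a cell of the subdivision of $P_j$ induced by $\nu_{f_j}$. The corresponding cell of $\tl{\mathcal{S}}$ is $\tl{F} := \Conv ( \bigcup_j \{ e_j \} \times F_j )$, since by construction the bounded faces of $\tl{U}$ are built from bounded faces of the $\UH_{f_j}$ in exactly this Cayley fashion.

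Next I would verify that $F \subset P_\infty$ if and only if $\tl{F} \subset \tl{P}_\infty$. The condition $F \subset \partial P$ is equivalent to $\tl{F}$ being a proper face of $\tl{P}$, while $F \cap \Int ( \RR^n_+ ) \neq \emptyset$ is equivalent to $\tl{F} \cap ( \Int ( \Delta ) \times \Int ( \RR^n_+ ) ) \neq \emptyset$, both being consequences of the Cayley relationship between $P$ and $\tl{P}$. Cells $\tl{F}' \subset \tl{P}_\infty$ that fail to meet $\Int ( \Delta ) \times \RR^n$ arise as proper faces of cells $\tl{F}$ already in the correspondence, and one checks directly that $\tl{m_{\tl{F}'}}$ then divides $\tl{m_{\tl{F}}}$, so such $\tl{F}'$ contribute no new roots of unity to $\tl{R_f}$.

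The heart of the proof is the identity $m_F = \tl{m_{\tl{F}}}$ for corresponding cells. On the Minkowski cell $F = F_1 + \cdots + F_k$ we have $\nu_f ( y_1 + \cdots + y_k ) = \sum_j \nu_{f_j} ( y_j )$ whenever $y_j \in \Aff ( F_j )$, while on $\tl{F}$ the affine extension $\tl{\nu_{\tl{F}}}$ is the unique affine function with $\tl{\nu_{\tl{F}}} ( e_j, v_j ) = \nu_{f_j} ( v_j )$ at each vertex $( e_j, v_j )$ of $\tl{F}$. The main obstacle is this lattice-index identification: one must carefully handle the fact that $\Aff ( \tl{F} )$ sits in $\RR^{n+k}$ while $\Aff ( F )$ sits in $\RR^n$, and choose a $\ZZ$-basis of $\Aff ( \tl{F} ) \cap \ZZ^{n+k}$ (consisting of lattice generators of the tangent spaces of the $\Aff ( F_j )$ together with the differences $( e_j, v_j^0 ) - ( e_1, v_1^0 )$ for reference vertices $v_j^0$) so as to show directly that the image of $\tl{\nu_{\tl{F}}}$ modulo $\ZZ$ equals that of $\nu_f |_F$ on $\Aff ( F ) \cap \ZZ^n$ modulo $\ZZ$. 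Once this equality of denominators is settled the lemma follows immediately.
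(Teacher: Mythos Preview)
Your proposal is correct and follows essentially the same strategy as the paper's proof: both reduce the computation of $\tl{R_f}$ to those cells of $\tl{\mathcal{S}}$ that involve all $k$ factors (the paper phrases this as cells whose relative interior lies in a ``side face'' of $\tl{P}$, i.e.\ a face projecting onto all of $\Delta$), observe that such cells are in bijection with cells of $\mathcal{S}$ in $P_\infty$ via the Cayley/Minkowski correspondence, and then check that the lattice indices $\tl{m_{\tl F}}$ and $m_F$ agree while smaller cells contribute only divisors of these indices. Your treatment of the index equality is somewhat more explicit than the paper's (which simply asserts it is easy), but the architecture of the argument is the same.
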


\begin{proof}
We shall say that a face $Q \prec \tl{P}$  
is a side face of $\tl{P}$ if its image by the 
projection $r: \Delta \times \RR_+^n \rightarrow 
\Delta$ is equal to $\Delta$. Note also that 
the inverse image of the barycener of $\Delta$ by 
the map $r|_{\tl{P}}: \tl{P} \rightarrow \Delta$ 
is similar to the Minkowski sum 
$P=P_1+ \cdots +P_k$. Hence there exists a natural 
bijection between the set of the side faces of $\tl{P}$ 
and that of the faces of $P$. Moreover for any 
cell $F\in\tl{\mathcal{S}}$ in $\tl{P}_{\infty}$ 
there exist another cell 
$F^{\prime} \in\tl{\mathcal{S}}$ in $\tl{P}_{\infty}$ 
and a side face $Q \prec \tl{P}$ of $\tl{P}$ 
such that $F \prec F^{\prime}$ and 
$\relint F^{\prime} \subset \relint Q$. 
Then we have $\tl{m_F} | \tl{m_{F^{\prime}}}$. 
This implies that for the definition of 
$\tl{R_f}$ it suffices to consider only 
cells $F\in\tl{\mathcal{S}}$ in $\tl{P}_{\infty}$ 
whose relative interiors are contained in 
those of side faces of $\tl{P}$. In fact, there 
exists also a natural 
bijection between the set of such cells 
$F\in\tl{\mathcal{S}}$ 
and that of the cells $G \in \mathcal{S}$ 
in $P_{\infty}$. For the cell  
$F\in\tl{\mathcal{S}}$ in $\tl{P}_{\infty}$ 
let $F_{\rm red} \in \mathcal{S}$ 
be the corresponding cell in $P_{\infty}$. 
Then it is easy to show that $\tl{m_F} = 
m_{F_{\rm red}}$. We thus obtain the 
equality $R_f= \tl{R_{f}}$. 
\end{proof}

Now we have the following generalization of 
Theorem~\ref{th:syucyuu} to families of
complete intersection subvarieties of $\CC^n$.

\begin{theorem}\label{th:cicon}
Let $Y=f^{-1}_{1}(0)\cap\dots\cap f^{-1}_{k}(0)$ 
be a family of subvarieties of $X_{0}=\CC^n$.
Assume that $Y$ is sch\"{o}n.
Then for any $\lambda\notin R_{f}= 
\tl{R_{f}}$
the equivariant refined limit mixed Hodge polynomial 
$E_{\lambda}(Y_{\infty};u,v,w)\in\ZZ[u,v,w]$ for the eigenvalue $\lambda$
is concentrated in degree $n-k$ in the variable $w$ and given by
\begin{align*}
E_{\lambda}(Y_{\infty};u,v,w)&=(-1)^{n-k}w^{n-k}\sum_{p,q}
h^{p,q}(H^{n-k}_{c}(Y_{\infty};\CC)_{\lambda})u^{p}v^{q}\\
&=(-1)^{n-k}\frac{w^{n-k}}{u^{k}v^{k}}l^{*}_{\lambda}(\tl{P},\tl{\nu};u,v)\\
&=(-1)^{n-k}\frac{w^{n-k}}{u^{k}v^{k}}
\sum_{F\in\tl{\mathcal{S}}}v^{\dim{F}+1}
l^{*}_{\lambda}(F,\tl{\nu}|_F;uv^{-1})
\cdot l_{\tl{P}}(\tl{\mathcal{S}},F;uv).
\end{align*}
In particular, by setting $u=v=s$ and $w=1$ we have
\begin{align*}
E_{\lambda}(Y_{\infty};s,s)&=(-1)^{n-k}\sum_{m\geq 0}(
\sum_{p+q=m}h^{p,q}(H^{n-k}_{c}(Y_{\infty};\CC)_{\lambda}))s^{m}\\
&=(-1)^{n-k}\frac{1}{s^{2k}}\sum_{F\in\tl{\mathcal{S}}}
s^{\dim{F}+1}l^{*}_{\lambda}(F,\tl{\nu}|_{F};1)
\cdot l_{\tl{P}}(\tl{\mathcal{S}},F;s^2).
\end{align*}
\end{theorem}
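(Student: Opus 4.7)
The plan is to extend the argument for Theorem~\ref{th:syucyuu} (the hypersurface case in $\CC^n$) by combining the torus-orbit stratification of $\CC^n$ with Theorem~\ref{propci} (the complete intersection formula on the torus) applied to each stratum.

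First, I would decompose $\CC^n=\bigsqcup_{I\subset\{1,\dots,n\}} T^I$ as in the proof of Theorem~\ref{th:syucyuu} and invoke the additivity of Lemma~\ref{new-lemma} to write
\[ E_\lambda(Y_t;u,v,w)=\sum_{I\subset\{1,\dots,n\}} E_\lambda(Y^I_t;u,v,w), \]
where each $Y^I\subset B(0;\e)^*\times T^I$ is the sch\"on family of complete intersections cut out in $T^I\simeq(\CS)^{|I|}$ by the restrictions $\{f_j^I\}_{j=1}^k$ obtained from $\{f_j\}$ by substituting $x_i=0$ for $i\notin I$. Applying Theorem~\ref{propci} in each $T^I$ expands $(uvw^2)^k E_\lambda(Y^I_t;u,v,w)$ into a sum (indexed by nonempty $J\subset\{1,\dots,k\}$) of refined limit mixed $h^*$-polynomials of the Cayley polytopes $\tl{P}^I_J\subset\RR^J\times\RR^I$, each of which is naturally a face of the global Cayley polytope $\tl{P}$, weighted by $(-1)^{\dim P^I_J-1}(uvw^2-1)^{|I|+|J|-1-\dim P^I_J}$.

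Next, I would unfold each $h^*_\lambda$ via its definition
\[ h^*_\lambda(\tl{P}^I_J,\tl{\nu}|_{\tl{P}^I_J};u,v,w)=\sum_{Q\prec\tl{P}^I_J} w^{\dim{Q}+1}\, l^*_\lambda(Q,\tl{\nu}|_Q;u,v)\cdot g([Q,\tl{P}^I_J];uvw^2), \]
and use the lemma preceding the statement: for $\lambda\notin R_f=\tl{R_f}$ we have $l^*_\lambda(Q,\tl{\nu}|_Q;u,v)=0$ whenever $Q\subset\tl{P}_\infty$. Hence only the faces $Q\prec\tl{P}$ whose relative interior meets $\Int(\Delta)\times\Int(\RR^n_+)$ survive. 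I would then interchange summations and, for each surviving face $Q$, evaluate the inner combinatorial factor
\[ \sum_{(I,J):\,Q\prec\tl{P}^I_J}(-1)^{\dim P^I_J-1}(uvw^2-1)^{|I|+|J|-1-\dim P^I_J}\cdot g([Q,\tl{P}^I_J];uvw^2). \]
The claim is that this vanishes unless $Q=\tl{P}$, in which case it equals $(-1)^{n-k}w^{n-k}$ after absorbing the factor $(uvw^2)^k$ on the left. Summing over $Q$ and applying Definition~\ref{def:poly}(ii) to $\tl{P}$ then yields the identity $E_\lambda(Y_t;u,v,w)=(-1)^{n-k}w^{n-k}(uv)^{-k} l^*_\lambda(\tl{P},\tl{\nu};u,v)$, from which pure $w$-degree $n-k$ is visible.

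Finally, to promote this polynomial concentration to the cohomological concentration implicit in the first displayed equality of the theorem, I would apply Theorem~\ref{th:555} to identify $H^j_c(Y_t;\CC)_\lambda\simeq H^j_c(Y^\circ_t;\CC)_\lambda$, then invoke the proof of Sabbah~\cite[Theorem~13.1]{Sabbah-2} to obtain the purity $Gr^W_r H^j_c(Y^\circ_t;\CC)_\lambda=0$ for $r\neq j$, and match terms in the polynomial identity. The main obstacle will be the combinatorial verification of the collapse in the third step. Unlike the proof of Theorem~\ref{th:syucyuu} which used only a single-index inclusion-exclusion on $I$, the present sum is doubly indexed by $(I,J)$ and features the nontrivial dimensional shift $|I|+|J|-1-\dim P^I_J$. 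I expect to resolve this by factorizing across the product structure $\Delta\times\RR^n_+$ of the ambient space of $\tl{P}$: the $J$-direction contribution should reproduce the Cayley-trick cancellation already present in the proof of Theorem~\ref{propci}, while the $I$-direction contribution should reproduce the stratification cancellation of Theorem~\ref{th:syucyuu}.
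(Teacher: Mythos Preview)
Your plan is the paper's proof: stratify $\CC^n$ by tori, apply Theorem~\ref{propci} on each stratum, expand $h^*_\lambda$ over faces of $\tl P$, discard those in $\tl P_\infty$, and collapse; the cohomological upgrade via Theorem~\ref{th:555} and Sabbah is recorded separately as Corollary~\ref{CCNcc}. For the collapse you flag as the main obstacle, the paper's shortcut is to observe that every relevant face $Q\prec\tl P$ has the form $P^{I'}_{J'}$ with full dimension $|I'|+|J'|-1$, whence all your $(uvw^2-1)$-exponents vanish and all $g$-polynomials equal $1$ (the intervals $[P^{I'}_{J'},P^I_J]$ being Boolean), reducing the inner sum to the pure double inclusion-exclusion $\sum_{I\supset I'}\sum_{J\supset J'}(-1)^{|I|+|J|}$ that you were anticipating.
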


\begin{proof}
For a possibly empty subset $I\subset \{1,\dots,n\}$,
we define a subset $T^{I}$ of $X_{0}=\CC^n$ by
\[T^{I}:=\{(x_{1},\dots,x_{n})\in X_{0} \ | \ 
x_{i}=0~(i\notin I), x_{i}\neq 0~(i\in I)\}\simeq(\CS)^{|I|}.\]
Then we have a decomposition $X_{0}= \CC^n =\bigsqcup_{
I\subset\{1,\dots,n\}}T^{I}$ of $X_{0}= \CC^n$.
We also define polynomials $f_{j,I}\in \BK[(x_{i})_{i\in I}]$ 
by substituting $0$ into the variable $x_{i}~(i\notin I)$ of $f_{j}$,
a family of subvarieties $Y^{I}$ of $T^{I}$ by 
$Y^{I}:=f^{-1}_{1,I}(0)\cap\dots\cap f^{-1}_{k,I}(0) 
\subset B^* \times T^I$ 
and polytopes $P_{j}^{I}$ in $\RR^{I}$ by $P^{I}_{j}:
=P_{j}\cap\RR^{I}$. We set $P^I= P^{I}_1 + \cdots + 
P^{I}_k= P \cap \RR^{I}$. 
Then by Lemma \ref{new-lemma} we have 
\[
E_{\lambda}(Y_{\infty};u,v,w)= 
\sum_{I\subset \{1,\dots,n\}} E_{\lambda}(Y_{\infty}^{I};u,v,w).
\] 
For each non-empty subset $J\subset \{1,\dots, k\}$ 
we define a polytope $P^{I}_{J}$ in $\RR^{J}\times \RR^{I}$
by $P^{I}_{J}:=\Conv(\bigcup_{j\in J}{\{e_{j}\}
\times P_{j}^{I}})$. 
We shall say that a face $Q \prec \tl{P}$ of $\tl{P}$ 
is relevant if $Q \not\subset \tl{P}_{\infty}$. 
If $Q \prec \tl{P}$ is relevant, then for 
any face $\sigma$ of the polyhedron 
$\Delta \times \RR_+^n$ containing $Q$ 
the face $\sigma \cap \tl{P} \prec \tl{P}$ 
of $\tl{P}$ is also relevant. 
Moreover there exist a possibly empty 
subset $I \subset \{1,\dots, n\}$ and 
a non-empty one $J\subset \{1,\dots, k\}$ 
such that $Q=P_J^I$ and $\dim P_J^I = 
|I|+|J|-1$. 
For each $I\subset \{1,\dots,n\}$ denote 
by $S^I$ the set consisting of 
non-empty subsets $J\subset\{1,\dots,k\}$ 
such that $P_J^I$ is relevant. 
Then by Theorem \ref{propci} 
for $\lambda\notin R_{f}= \tl{R_{f}}$ we have 
\[ 
(uvw^{2})^{k}E_{\lambda}(Y^I_{\infty};u,v,w)= 
\sum_{J \in S^I}
(-1)^{|I|+|J|} 
h^{*}_{\lambda}(P^I_J, 
\tl{\nu}|_{P^I_J};u,v,w).
\] 
Moreover for each relevant face 
$P^I_J \prec \tl{P}$ of $\tl{P}$ 
by Definition~\ref{def:poly} we have
\begin{align}\label{cieq3}
h^{*}_{\lambda}(P^{I}_{J},\tl{\nu}|_{P^{I}_{J}};u,v,w)
=\sum_{Q\prec P^{I}_{J}}
w^{\dim Q+1}l^{*}_{\lambda}(Q,\tl{\nu}|_{Q};u,v)
\cdot g([Q,P^{I}_{J}];uvw^2).
\end{align}
If $Q\prec P^{I}_{J}$ is not a 
relevant face of $\tl{P}$, then $Q \subset \tl{P}_{\infty}$ 
for $\lambda\notin R_{f}= \tl{R_{f}}$ 
we have 
$l^{*}_{\lambda}(Q,\tl{\nu}|_{Q};u,v)=0$. Note also 
that $l^{*}_{\lambda}(\emptyset,\tl{\nu}|_{\emptyset};u,v)=0$ 
for any $\lambda\neq1$. 
Thus we may assume $Q$ is not empty set in the right 
hand side of the equation~(\ref{cieq3}). 
Hence as in the proof of Theorem \ref{th:syucyuu}, 
most of terms in the 
calculation of $(uvw^2)^{k}E_{\lambda}(Y_{\infty};u,v,w)$ 
cancel each other. 
Eventually, we obtain the desired formula 
\[(uvw^2)^{k}E_{\lambda}(Y_{\infty};u,v,w)=(-1)^{n+k}
w^{n+k}l^{*}_{\lambda}(\tl{P};\tl{\nu},u,v).\]
\end{proof}

As in Corollary \ref {Ncc}, 
by Theorems \ref{th:555} and \ref{th:cicon} 
we obtain the following result. 

\begin{corollary}\label{CCNcc}
In the situation of Theorem \ref{th:cicon}, 
for any $\lambda\notin R_{f}$ 
and $t\in \CS$ such that $0<|t|\ll1$ we have the 
concentration 
\[H^{j}_{c}(Y_{t};\CC)_{\lambda} 
\simeq 0 \qquad (j \not= n-k)\] 
and the filtration on the only non-trivial 
cohomology group $H^{n-k}_c(Y_{t};\CC)_{\lambda}$ 
induced by Deligne's weight filtration on 
$H^{n-k}_c(Y_{t};\CC)$ is concentrated in 
degree $n-k$. 
\end{corollary}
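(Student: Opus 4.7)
The plan is to mimic the proof of Corollary \ref{Ncc} essentially verbatim, substituting Theorem \ref{th:555} for Theorem \ref{th:5} and Theorem \ref{th:cicon} for Theorem \ref{th:syucyuu}. The key ingredient produced by Theorem \ref{th:555} is a pair of isomorphisms: the first identifies $H^j_c(Y_t;\CC)_\lambda$ with $H^j_c(Y^\circ_t;\CC)_\lambda$ for $\lambda\notin R_f$, while the second, obtained from the isomorphism $\psi_{t,\lambda}(j_! R(\pi^\circ)_! \CC_{Y^\circ}) \simeq \psi_{t,\lambda}(j_! R(\pi^\circ)_* \CC_{Y^\circ})$ by taking cohomology, supplies further isomorphisms
\[
H^j_c(Y^\circ_t;\CC)_\lambda \simeq H^j(Y^\circ_t;\CC)_\lambda \qquad (j\in\ZZ).
\]

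First I would derive the topological concentration. By the sch\"onness assumption, for $|t|$ small enough $Y^\circ_t \subset \Omega_0$ is a smooth affine complete intersection of pure dimension $n-k$, so the generalized Poincar\'e-Lefschetz duality gives $H^j_c(Y^\circ_t;\CC)=0$ for $j<n-k$ and $H^j(Y^\circ_t;\CC)=0$ for $j>n-k$. Combined with the isomorphisms above on the $\lambda$-generalized eigenspaces, this forces $H^j_c(Y^\circ_t;\CC)_\lambda=0$ for every $j\neq n-k$, and transporting through the first isomorphism of Theorem \ref{th:555} yields the desired vanishing $H^j_c(Y_t;\CC)_\lambda=0$ for $j\neq n-k$.

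For the weight statement I would then apply the proof of Sabbah's \cite[Theorem 13.1]{Sabbah-2} in the same way as in the proof of Corollary \ref{Ncc}: the isomorphism $H^{n-k}_c(Y^\circ_t;\CC)_\lambda \simeq H^{n-k}(Y^\circ_t;\CC)_\lambda$ is one of mixed Hodge structures (since it is induced by a morphism of mixed Hodge modules through the commutative diagram of Proposition \ref{GLMCD}), and by strictness of the weight filtration this can only happen if the induced filtration is concentrated in pure weight $n-k$. The isomorphism $H^{n-k}_c(Y_t;\CC)_\lambda\simeq H^{n-k}_c(Y^\circ_t;\CC)_\lambda$ is likewise compatible with weight filtrations, so the concentration descends to $H^{n-k}_c(Y_t;\CC)_\lambda$.

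The main obstacle here is not in the present corollary but in its inputs: the real work lies in establishing Theorem \ref{th:555}, whose proof requires the complete intersection version of the primitive decomposition argument in Theorem \ref{th:2}, together with a careful analysis of the relevant cones of the fan and the local systems appearing in the decomposition of $\psi_{\pi_\sigma,\lambda}(\F_\sigma)$. Granting those ingredients together with Theorem \ref{th:cicon} (which already gives the vanishing through the equivariant refined limit mixed Hodge polynomial being concentrated in degree $n-k$ in $w$, providing an independent verification), the deduction of Corollary \ref{CCNcc} is a direct bookkeeping application of Poincar\'e duality and Sabbah's strictness argument, with no new geometric content.
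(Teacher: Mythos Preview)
Your overall plan---transplant the proof of Corollary~\ref{Ncc} using Theorem~\ref{th:555} in place of Theorem~\ref{th:5} and Theorem~\ref{th:cicon} in place of Theorem~\ref{th:syucyuu}---is exactly the paper's approach. The ingredients you name are the right ones.

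There is, however, a gap in your execution of the concentration step. You assert that $Y^{\circ}_t\subset\Omega_0$ is \emph{affine}, and then invoke Artin--Andreotti--Frankel vanishing to get $H^j_c(Y^{\circ}_t;\CC)=0$ for $j<n-k$ and $H^j(Y^{\circ}_t;\CC)=0$ for $j>n-k$. But $\Omega_0$ is the toric variety of a subfan $\Sigma_1^{\circ}$ of the fan of $\CC^n$; it is the complement in $\CC^n$ of a union of coordinate subspaces and need not be affine (think of $\CC^2\setminus\{0\}$). Consequently $Y^{\circ}_t=Y_t\cap\Omega_0$ is only quasi-affine in general, and neither vanishing is available for it directly. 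This is precisely why the paper's proof of Corollary~\ref{Ncc} does \emph{not} argue via affineness of $Y^{\circ}_t$: instead it first applies Sabbah's argument to the isomorphism $H^j_c(Y^{\circ}_t;\CC)_\lambda\simeq H^j(Y^{\circ}_t;\CC)_\lambda$ to conclude that the induced Deligne weight filtration on each $H^j_c(Y^{\circ}_t;\CC)_\lambda$ is pure of weight $j$, transports this purity to $H^j_c(Y_t;\CC)_\lambda$ via the first isomorphism of Theorem~\ref{th:555}, and \emph{then} invokes Theorem~\ref{th:cicon}, whose concentration of $E_\lambda(Y_t;u,v,w)$ in $w^{n-k}$ forces all $j\neq n-k$ to vanish (since different $j$'s now contribute different powers of $w$ with no cancellation). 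In your write-up Theorem~\ref{th:cicon} is demoted to an ``independent verification'', but it is actually doing the work.

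An alternative repair, closer in spirit to what you wrote, is to route the Poincar\'e/Artin argument through $Y_t$ itself (which \emph{is} a smooth affine complete intersection in $\CC^n$) rather than $Y^{\circ}_t$: use $H^j_c(Y_t;\CC)=0$ for $j<n-k$ directly, and for $j>n-k$ combine $H^j(Y_t;\CC)=0$ with the generalization of Remark~\ref{imrem} mentioned after Theorem~\ref{th:555} to obtain $H^j(Y^{\circ}_t;\CC)_\lambda=0$, hence $H^j_c(Y^{\circ}_t;\CC)_\lambda=0$, hence $H^j_c(Y_t;\CC)_\lambda=0$.
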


By Theorem~\ref{th:cicon} 
and Corollary \ref{CCNcc}, for any $\lambda\notin 
R_{f}$ we can describe the
Jordan normal form of the 
middle-dimensional monodromy 
\[\Phi_{n-k}:H^{n-k}_{c}(Y_{t};\CC)_{\lambda}
\xrightarrow{\sim}H^{n-k}_{c}(Y_{t};\CC)_{\lambda}\]
as in Theorem~\ref{torusjordan}. 
Recall that the dimension of $\tl{P}$ is $n+k-1$,
and hence for a cell $F\in\tl{\mathcal{S}}$ the 
local $h$-polynomial $l_{\tl{P}}(\tl{\mathcal{S}},
F;t)\in\ZZ[t]$ has non-negative coefficients and the symmetry
\[l_{\tl{P}}(\tl{\mathcal{S}},F;t)=t^{n+k-1
-\dim{F}}l_{\tl{P}}(\tl{\mathcal{S}},F;t^{-1}).\]
Moreover it is unimodal.
Hence there exist non-negative integers 
$l_{F,i}~(0\leq i\leq\lfloor\frac{n+k-1-\dim{F}}{2}\rfloor)$ such that
\begin{align*}
l_{\tl{P}}(\tl{\mathcal{S}},F;t)&=l_{F,0}(1+t+
t^2+\dots+t^{n+k-1-\dim{F}})\\
&+l_{F,1}(t+t^2+\dots+t^{n+k-1-\dim{F}-1})\\
&+l_{F,2}(t^2+\dots+t^{n+k-1-\dim{F}-2})\\
&+\cdots\cdots.
\end{align*}
We set
\[\tl{l}_{\tl{P}}(\tl{\mathcal{S}},F;t)=
\sum_{i=0}^{\lfloor{\frac{n+k-1-\dim{F}}{2}}\rfloor}l_{F,i}t^i.\]

\begin{theorem}\label{JBFCI} 
Let $Y=f^{-1}_{1}(0)\cap\dots\cap f^{-1}_{k}(0)$ 
be a family of subvarieties of $X_{0}=\CC^n$.
Assume that the family $Y$ is sch\"on.
For $\lambda\in\CC$ and $m\geq 1$ denote by $J_{\lambda,m}$
the number of the Jordan blocks in the monodromy automorphism
\[\Phi_{n-k}:H^{n-k}_{c}(Y_{t};\CC)\xrightarrow{\sim}
H^{n-k}_{c}(Y_{t};\CC)\qquad(0<|t|\ll1)\]
for the eigenvalue $\lambda$ with size $m$.
Then for $\lambda\notin R_{f}$ we have
\[\sum_{m=0}^{n-k}J_{\lambda,n-k+1-m}s^{m+2k}=
\sum_{F\in\tl{\mathcal{S}}}s^{\dim{F}+1}l^{*}_{\lambda}(
F,\tl{\nu}|_{F};1) \cdot 
\tl{l}_{\tl{P}}(\tl{\mathcal{S}},F;s^2).\]
\end{theorem}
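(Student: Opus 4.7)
The plan is to follow the scheme of Theorem~\ref{torusjordan}, with Theorem~\ref{th:cicon} playing the role of Theorem~\ref{EQEEP} and Corollary~\ref{CCNcc} that of Corollary~\ref{Ncc}.

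By Corollary~\ref{CCNcc}, for $\lambda \notin R_f$ only the middle-dimensional cohomology $H^{n-k}_c(Y_t;\CC)_\lambda$ contributes, and its Deligne weight filtration is pure of weight $n-k$. Hence the weight filtration of the limit mixed Hodge structure coincides with the monodromy filtration of $\Phi_{n-k}$ centered at $n-k$. Consequently the polynomial $\tl{E}_\lambda(Y_t;s) := (-1)^{n-k} E_\lambda(Y_t;s,s)$ is a symmetric unimodal polynomial about $s^{n-k}$ with non-negative integer coefficients, and admits the Lefschetz decomposition
$$\tl{E}_\lambda(Y_t;s) = \sum_{i=0}^{n-k} q_{\lambda,i}(s^i + s^{i+2} + \cdots + s^{2(n-k)-i}), \qquad q_{\lambda, i} \in \ZZ_{\geq 0}.$$
Since each Jordan block of size $m$ for $\Phi_{n-k}$ corresponds bijectively to a primitive vector of monodromy weight $n-k+1-m$, we get the complete-intersection analog of Proposition~\ref{th:4}, namely $J_{\lambda, n-k+1-i} = q_{\lambda, i}$, so that the left-hand side of the formula becomes $s^{2k} \sum_{i=0}^{n-k} q_{\lambda, i} s^i$.

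By Theorem~\ref{th:cicon} with $u = v = s$ and $w = 1$, we have
$$s^{2k} \tl{E}_\lambda(Y_t;s) = \sum_{F \in \tl{\mathcal{S}}} s^{\dim F + 1} l^{*}_\lambda(F, \tl{\nu}|_F; 1) \cdot l_{\tl{P}}(\tl{\mathcal{S}}, F; s^2).$$
Substituting the unimodal decomposition $l_{\tl{P}}(\tl{\mathcal{S}}, F; t) = \sum_i l_{F, i}(t^i + t^{i+1} + \cdots + t^{n+k-1-\dim F-i})$, each pair $(F, i)$ contributes an elementary symmetric triangle
$$l^{*}_\lambda(F, \tl{\nu}|_F; 1)\, l_{F, i}\bigl(s^{\dim F + 1 + 2i} + s^{\dim F + 3 + 2i} + \cdots + s^{2n+2k-\dim F - 1 - 2i}\bigr)$$
centered at $s^{n+k}$. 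On the other hand, shifting the Lefschetz decomposition expresses the same polynomial $s^{2k}\tl{E}_\lambda(Y_t; s)$ as $\sum_j q_{\lambda, j}(s^{j+2k} + s^{j+2k+2} + \cdots + s^{2n-j})$, again a sum of elementary symmetric triangles about $s^{n+k}$. Since a symmetric unimodal polynomial admits a unique decomposition into elementary symmetric triangles with non-negative coefficients, the two decompositions must yield the same multiset of triangles, and comparing the \emph{tip polynomials} (formed by the smallest exponent of each triangle, weighted by its coefficient) gives
$$\sum_{F \in \tl{\mathcal{S}}} s^{\dim F + 1} l^{*}_\lambda(F, \tl{\nu}|_F; 1) \cdot \tl{l}_{\tl{P}}(\tl{\mathcal{S}}, F; s^2) = s^{2k} \sum_{i=0}^{n-k} q_{\lambda, i} s^i,$$
which is the desired equality.

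The main technical ingredient is the unimodality of $l_{\tl{P}}(\tl{\mathcal{S}}, F; t)$, established in \cite{Stapledon}, which underlies the uniqueness of the elementary symmetric triangle decomposition and hence the validity of the matching argument. All the remaining steps are formal consequences of the structural results already established in the paper.
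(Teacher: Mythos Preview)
Your proof is correct and follows essentially the same route the paper indicates (Theorem~\ref{th:cicon} and Corollary~\ref{CCNcc} used ``as in Theorem~\ref{torusjordan}''); you have simply written out the triangle-matching step that the paper leaves implicit. Two minor remarks: the primitive vector attached to a Jordan block of size $m$ sits at monodromy weight $n-k+m-1$, not $n-k+1-m$ (though your conclusion $J_{\lambda,n-k+1-i}=q_{\lambda,i}$ is correct), and the uniqueness of the triangle decomposition is plain linear algebra and does not require unimodality---unimodality is only what guarantees the $l_{F,i}$ are non-negative in the paper's definition of $\tl{l}_{\tl{P}}$.
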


The multiplicities of the eigenvalues 
$\lambda\notin R_{f}$ in the monodromy 
$\Phi_{n-k}$ are described more simply as follows. 
For a cell $F \in \mathcal{S}$ 
let $Q_F \prec P$ be the unique 
face of $P$ such that $\relint F \subset \relint Q_F$. 

\begin{theorem}\label{CCCIemulti}
In the situation of Theorem \ref{JBFCI}, 
for $\lambda\notin R_{f}$ 
the multiplicity of the factor $t- \lambda$ in 
the characteristic polynomial of the 
middle-dimensional monodromy 
\[\Phi_{n-k}:H^{n-k}_{c}(Y_{t};\CC)
\xrightarrow{\sim}H^{n-k}_{c}(Y_{t};\CC) 
\qquad (0<|t|\ll1)\]
is equal to that in
\[
\prod_{F \not\subset P_{\infty}, \ \dim F= \dim Q_F} 
(t^{m_F}-1)^{(-1)^{n- \dim F} K_F }, 
\]
where we define the integers $K_F$ as in 
Theorem \ref{CIaffinemulti}. 
\end{theorem}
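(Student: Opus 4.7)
The plan is to compute the monodromy zeta function
$$\zeta_Y(\tau) := \prod_{j \in \ZZ} \det(\tau \cdot \id - \Phi_j)^{(-1)^j}$$
(with the determinant taken on $H^j_c(Y_t; \CC)$) and then read off the desired multiplicity using the concentration theorem, following the strategy used in the proofs of Theorems \ref{affinemulti} and \ref{CIaffinemulti}, i.e.\ the Matsui-Takeuchi method of \cite{M-T-2}. By Corollary \ref{CCNcc}, for $\lambda\notin R_{f}$ the cohomology $H^j_c(Y_t;\CC)_\lambda$ vanishes unless $j=n-k$, so the multiplicity of $(\tau-\lambda)$ in $\zeta_Y(\tau)$ equals $(-1)^{n-k}$ times the multiplicity in the characteristic polynomial of $\Phi_{n-k}$. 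It thus suffices to compute $\zeta_Y(\tau)$.

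Next, I would stratify $X_0 = \bigsqcup_{I \subset \{1,\dots,n\}} T^I$ and set $Y^I = Y \cap (B^{*} \times T^I)$. The additivity of nearby cycles underlying Lemma \ref{new-lemma} then yields the multiplicativity $\zeta_Y(\tau) = \prod_I \zeta_{Y^I}(\tau)$. Each $Y^I$ is a sch\"on complete intersection family in the torus $T^I \simeq (\CS)^{|I|}$, with Newton polytopes $P_j^I := P_j \cap \RR^I$ and combined polytope $P^I = P \cap \RR^I$. Applying to each $Y^I$ the A'Campo-type computation underlying Theorem \ref{CIaffinemulti} — together with the torus-case concentration in middle degree $|I|-k$ (Corollary \ref{CINcc}) and Bernstein-Khovanskii-Kushnirenko's theorem, which identifies the relevant Euler characteristics with the mixed-volume integers $K_F$ — shows that the contribution of $\zeta_{Y^I}(\tau)$ to the $(\tau-\lambda)$-factor (for $\lambda \notin R_f$) equals that in
$$\prod_{F \in \mathcal{S},\ \relint F \subset \Int P^I,\ \dim F = |I|} (\tau^{m_F} - 1)^{(-1)^{|I| - k} K_F}.$$

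To assemble: cells $F \in \mathcal{S}$ satisfying $\relint F \subset \Int P^I$ and $\dim F = |I|$ correspond bijectively to the cells whose supporting face of $P$ is the coordinate face $P^I$. Since the coordinate faces of $P$ are precisely the faces not contained in $P_\infty$, as $I$ ranges these cells match exactly the set $\{F \in \mathcal{S} : F \not\subset P_\infty,\ \dim F = \dim Q_F\}$, with $|I| = \dim F$ throughout. Combining the sign $(-1)^{|I|-k} = (-1)^{\dim F - k}$ from the torus zeta function with the overall $(-1)^{n-k}$ from the concentration step gives the exponent $(-1)^{n-\dim F} K_F$ of $(\tau^{m_F}-1)$, as required.

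The main obstacle is the direct A'Campo-style computation of $\zeta_{Y^I}(\tau)$ for each torus stratum: contributions from lower-dimensional cells of $\mathcal{S}$ must be shown to cancel in the alternating product, which is the combinatorial heart of the Matsui-Takeuchi argument and requires a careful bookkeeping of the toric compactification. A secondary subtlety is that $\lambda \notin R_f$ does not automatically force $\lambda \notin R_{f^I}^{\mathrm{torus}}$ for each $I$; however, any extra factor $(\tau^{m_F}-1)$ appearing in $\zeta_{Y^I}$ for a cell $F \subset \partial P^I \setminus P_\infty$ lies in some $P^{I'}$ with $I' \subsetneq I$ and is therefore absorbed into the product for $I'$, while factors from cells $F \subset P_\infty$ satisfy $\lambda^{m_F} \neq 1$ by the hypothesis $\lambda \notin R_f$ and so do not contribute to the $(\tau-\lambda)$-factor.
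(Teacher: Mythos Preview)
Your proposal is correct and follows the approach the paper implicitly takes: the paper states Theorem \ref{CCCIemulti} without proof, relying on the pattern set by Theorems \ref{affinemulti} and \ref{CIaffinemulti}, which are proved by combining the relevant concentration result (here Corollary \ref{CCNcc}) with a monodromy-zeta-function computation \`a la Matsui--Takeuchi \cite{M-T-2} and the Bernstein--Khovanskii--Kushnirenko theorem. Your stratification of $\CC^n$ into coordinate tori and reduction to the torus formula of Theorem \ref{CIaffinemulti} is precisely the natural way to carry this out, and the two obstacles you flag (the combinatorial cancellation of lower-dimensional contributions, and the mismatch between $R_f$ and the torus-level $R_{f^I}$) are the genuine technical points one must address.
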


\bibliographystyle{plain}
\bibliography{referenceforonthemon}

\end{document}